\newcommand{\Z}{\ensuremath{\mathbb{Z}}}
\newcommand{\Q}{\ensuremath{\mathbb{Q}}}
\newcommand{\R}{\ensuremath{\mathbb{R}}}
\newcommand{\CC}{\ensuremath{\mathbb{C}}}
\newcommand{\A}{\ensuremath{\mathbb{A}}}
\newcommand{\F}{\ensuremath{\mathbb{F}}}
\newcommand{\Aut}{\operatorname{Aut}}
\newcommand{\Ind}{\operatorname{Ind}}
\newcommand{\Gal}{\operatorname{Gal}}
\newcommand{\Frob}{\operatorname{Frob}}
\newcommand{\Weil}[1]{\ensuremath{\mathrm{W}_{#1}}}	
\newcommand{\dd}{\mathop{}\!\mathrm{d}}
\newcommand{\Ccusp}{\ensuremath{C_c^{\mathrm{cusp}}}}	
\newcommand{\lrangle}[1]{\ensuremath{\langle #1 \rangle}}
\newcommand{\identity}{\ensuremath{\mathrm{id}}}
\newcommand{\Hom}{\operatorname{Hom}}
\newcommand{\iHom}{\ensuremath{\mathrm{R}\mathcal{H}{om}}}
\newcommand{\End}{\operatorname{End}}
\newcommand{\rightiso}{\ensuremath{\stackrel{\sim}{\rightarrow}}}
\newcommand{\leftiso}{\ensuremath{\stackrel{\sim}{\leftarrow}}}
\newcommand{\Ker}{\operatorname{ker}}
\newcommand{\Image}{\operatorname{im}}
\newcommand{\dotimes}[1]{\ensuremath{\underset{#1}{\otimes}}}
\newcommand{\Lotimes}{\ensuremath{\overset{\mathsf{L}}{\otimes}}}
\newcommand{\Bun}{\operatorname{Bun}}
\newcommand{\Hecke}{\operatorname{Hecke}}
\newcommand{\Cht}{\operatorname{Cht}}
\newcommand{\Gra}{\operatorname{Gr}}
\newcommand{\Spec}{\operatorname{Spec}}
\newcommand{\Gm}{\ensuremath{\mathbb{G}_\mathrm{m}}}
\newcommand{\Ga}{\ensuremath{\mathbb{G}_\mathrm{a}}}
\newcommand{\dtimes}[1]{\ensuremath{\underset{#1}{\times}}}
\newcommand{\Supp}{\operatorname{Supp}}
\newcommand{\VerD}{\ensuremath{\mathbb{D}}}	
\newcommand{\GL}{\operatorname{GL}}
\newcommand{\SO}{\operatorname{SO}}
\newcommand{\Sp}{\operatorname{Sp}}
\newcommand{\Lgrp}[1]{\ensuremath{{}^{\mathrm{L}} #1}}	
\theoremstyle{plain}
\newtheorem{proposition}{Proposition}
\newtheorem{lemma}[proposition]{Lemma}
\newtheorem{theorem}[proposition]{Theorem}
\newtheorem{corollary}[proposition]{Corollary}
\theoremstyle{definition}
\newtheorem{definition}[proposition]{Definition}
\newtheorem{definition-theorem}[proposition]{Definition-Theorem}
\newtheorem{definition-proposition}[proposition]{Definition-Proposition}
\newtheorem{remark}[proposition]{Remark}
\theoremstyle{definition}
\newtheorem{conj}{Conjecture}
\newtheorem{rem}[conj]{Remark}
\theoremstyle{plain}
\newtheorem{thm}[conj]{Theorem}
\numberwithin{equation}{section}
\numberwithin{proposition}{subsection}
\numberwithin{conj}{section}	
\newcommand{\cate}[1]{\ensuremath{\mathsf{#1}}}	
\newcommand{\dcate}[1]{\ensuremath{\text{-}\mathsf{#1}}}	
\renewcommand{\emptyset}{\ensuremath{\varnothing}}	
\title{Contragredient representations over local fields of positive characteristic}
\author{Wen-Wei Li}
\date{}
\begin{document}

\maketitle

\begin{abstract}
	It is conjectured by Adams--Vogan and Prasad that under the local Langlands correspondence, the $L$-parameter of the contragredient representation equals that of the original representation composed with the Chevalley involution of the $L$-group. We verify a variant of their prediction for all connected reductive groups over local fields of positive characteristic, in terms of the local Langlands parameterization of Genestier--Lafforgue. We deduce this from a global result for cuspidal automorphic representations over function fields, which is in turn based on a description of the transposes of V.\ Lafforgue's excursion operators.
\end{abstract}

{\scriptsize
\begin{tabular}{ll}
	\textbf{MSC (2010)} & 11F70; 11R58 22E55 \\
	\textbf{Keywords} & contragredient representation, function field, local Langlands conjecture
\end{tabular}}

\tableofcontents

\section{Introduction}
Let $F$ be a local field. Choose a separable closure $\overline{F}|F$ and let $\Weil{F}$ be the Weil group of $F$. For a connected reductive $F$-group $G$, the \emph{local Langlands conjecture} asserts the existence of a map
\[ \mathrm{LLC}: \Pi(G) \to \Phi(G) \]
where $\Pi(G)$ is the set of isomorphism classes of irreducible smooth representations $\pi$ of $G(F)$ (or Harish--Chandra modules when $F$ is Archimedean), and $\Phi(G)$ is the set of $\hat{G}$-conjugacy classes of $L$-parameters $\Weil{F} \xrightarrow{\phi} \Lgrp{G}$. Here the representations and the $L$-groups are taken over $\CC$, but we will soon switch to the setting of non-Archimedean $F$ and $\ell$-adic coefficients.

It is expected that the \emph{$L$-packets} $\Pi_\phi := \mathrm{LLC}^{-1}(\phi)$ are finite sets; if $\pi \in \Pi_\phi$, we say $\phi$ is the \emph{$L$-parameter} of $\pi$. The local Langlands correspondence also predicates on the internal structure of $\Pi_\phi$ when $\phi$ is a \emph{tempered} parameter; this requires additional structures as follows.
\begin{itemize}
	\item When $G$ is quasisplit, choose a Whittaker datum $\mathfrak{w} = (U,\chi)$ of $G$, taken up to $G(F)$-conjugacy, where $U \subset G$ is a maximal unipotent subgroup and $\chi$ is a generic smooth character of $U(F)$. The individual members of $\Pi_\phi$ are described in terms of
	\[ S_\phi := Z_{\hat{G}}(\Image(\phi)), \quad \mathcal{S}_\phi := \pi_0(S_\phi). \]
	Specifically, to each $\pi \in \Pi_\phi$ one should be able to attach an irreducible representation $\rho$ of the finite group $\mathcal{S}_\phi$ (up to isomorphism), such that a $\mathfrak{w}$-generic $\pi \in \Pi_\phi$ maps to $\rho = \mathbf{1}$.
	\item For non-split $G$, one needs to connect $G$ to a quasisplit group by means of a pure inner twist, or more generally a rigid inner twist \cite{Kal16a}; in parallel, the $L$-packets will extend across various inner forms of $G$. We refer to \cite[\S 5.4]{Kal16a} for a discussion in this generality.
\end{itemize}

One natural question is to describe various operations on $\Pi(G)$ in terms of $L$-parameters. Among them, we consider the \emph{contragredient} $\check{\pi}$ of $\pi$. The question is thus:
\begin{center}
	How is $\pi \mapsto \check{\pi}$ in $\Pi(G)$ reflected on $\Phi(G)$?
\end{center}
Despite its immediate appearance, this question has not been considered in this generality until the works of Adams--Vogan \cite[Conjecture 1.1]{AV16} and D.\ Prasad \cite[\S 4]{Pra18} independently. The answer hinges on the \emph{Chevalley involution} $\Lgrp{\theta}$ on $\Lgrp{G}$ to be reviewed in \S\ref{sec:Chevalley}.

\begin{conj}[Adams--Vogan--Prasad]\label{conj:AVP}
	Let $\pi$ be an irreducible smooth representation of $G(F)$.
	\begin{enumerate}
		\item If $\pi$ has $L$-parameter $\phi$, then $\check{\pi}$ has $L$-parameter $\Lgrp{\theta} \circ \phi$.
		\item Assume for simplicity that $G$ is quasisplit and fix a Whittaker datum $\mathfrak{w}$. If a tempered representation $\pi \in \Pi_\phi$ corresponds to an irreducible representation $\rho$ of $\mathcal{S}_\phi$, then $\check{\pi}$ corresponds to $(\rho \circ \Lgrp{\theta})^\vee$ tensored with a character $\xi$ of $\mathcal{S}_\phi$.
	\end{enumerate}
	To define $\xi$, we use the general recipe \cite[Lemma 4.1]{Kal13}:
	\[\begin{tikzcd}[row sep=small, column sep=small]
		\pi_0\left( S_\phi / Z_{\hat{G}}^{\Gal(\overline{F}|F)} \right) \arrow[r] & \mathrm{ker}\left[ \mathrm{H}^1(\mathrm{W}_F, Z_{\hat{G}^\mathrm{sc}}) \to \mathrm{H}^1(\mathrm{W}_F, Z_{\hat{G}}) \right] \arrow[twoheadrightarrow, d] \\
		\mathcal{S}_\phi \arrow[u] & \left( \dfrac{G^\mathrm{ad}(F)}{\Image[G(F) \to G^\mathrm{ad}(F)]} \right)^\text{Pontryagin dual}
	\end{tikzcd}\]
	Let $B$ be the Borel subgroup of $G$ included in the Whittaker datum, and choose a maximal torus $T \subset B$. Take the $\kappa \in T^\mathrm{ad}(F)$ acting as $-1$ on each $\mathfrak{g}_\alpha$ where $\alpha$ is any $B$-simple root. This furnishes the character $\xi$ of $\mathcal{S}_\phi$. When $G$ is not quasisplit, we have to endow it with a pure or rigid inner twist alluded to above.
\end{conj}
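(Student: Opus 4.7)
The plan is to deduce part (1) of the conjecture, stated for the Genestier--Lafforgue parameterization, from a global analogue over function fields. Given a local non-Archimedean $F$ of positive characteristic and an irreducible smooth $\pi$ of $G(F)$, I would globalize: choose a global function field $K$, a reductive $K$-group still denoted $G$, and a place $v$ with $K_v = F$, so that $\pi$ arises as the local component at $v$ of a cuspidal automorphic representation $\Pi$ of $G(\A_K)$. Since the Genestier--Lafforgue local parameter is compatible with the semisimplification at $v$ of V.\ Lafforgue's global parameter of $\Pi$, the desired identity $\phi_{\check\pi} = \Lgrp{\theta} \circ \phi_\pi$ will follow from the same statement for the global parameter, applied to $\check\Pi$.

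\textbf{Key step.} The global parameter of $\Pi$ is determined by the characters of V.\ Lafforgue's excursion operators $S_{I,W,f,(\gamma_i)_{i\in I}}$ acting on the $\Pi$-isotypic component of the cuspidal spectrum. The central computation is to identify the transpose of such an operator with respect to the canonical pairing between cusp forms on $G(K)\backslash G(\A_K)^1$ and cusp forms for $\check\Pi$. My target identity is that this transpose equals an excursion operator whose representation of $\hat G^I$ is the contragredient $W^\vee$ and whose Galois arguments are suitably inverted; geometrically this amounts to reversing the orientation of the shtuka legs, which on the Satake side swaps $W$ with $W^\vee$ and reverses the direction of Frobenius modifications in a controlled manner.

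\textbf{Conclusion.} Once the transpose identity is in place, I would invoke the standard self-duality $W^\vee \cong W \circ \Lgrp{\theta}$ of representations of $\hat G^I$ induced by the Chevalley involution, in order to repackage the transposed excursion datum as the original datum on $\Pi$ composed with $\Lgrp{\theta}$. This yields part (1) of the conjecture at $v$ after specializing the global result. Parts of the argument that deserve emphasis: one must check that the set-theoretic globalization preserves the local component at $v$ up to the necessary compatibilities, and that the pairing used to take transposes indeed intertwines $\Pi$ with $\check\Pi$ at every place.

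\textbf{Main obstacle.} The hardest aspect will be pinning down the self-duality $W^\vee \cong W \circ \Lgrp{\theta}$ canonically, since $\Lgrp{\theta}$ is defined only up to $\hat G^{\mathrm{sc}}$-conjugacy: matching the transposed shtuka construction with the Chevalley-twisted one requires a careful choice of pinning and a sign analysis inherited from the geometric Satake equivalence. Part (2) of the conjecture, which refines the internal parameter $\rho$ by Kaletha's character $\xi$, lies beyond the reach of excursion operators, as these detect only the semisimple part of a parameter; accordingly I would formulate and prove only the variant obtained by dropping this refinement, consistent with the present scope of the Genestier--Lafforgue correspondence.
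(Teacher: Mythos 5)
Your overall strategy --- globalize the local representation, compute the transpose of V.\ Lafforgue's excursion operators with respect to the integration pairing on cusp forms, and feed the Chevalley involution in through the Satake side, while restricting to part (1) only --- does match the line of argument the paper actually follows (Theorems \ref{prop:local-contragredient}, \ref{prop:global-contragredient}; Lemma \ref{prop:transpose-1}). But your stated intermediate target is incorrect in a way that would derail the computation. You claim $S_{I,W,\xi,x,\vec\gamma}^*$ equals an excursion operator built from the plain contragredient $W^\vee$ with inverted $\vec\gamma$, and you propose to bring in $\Lgrp{\theta}$ afterwards via an isomorphism $W^\vee \cong W^\theta$. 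That does not work. If the transpose really corresponded to the data $(W^\vee, x, \xi, \vec\gamma^{-1})$, the associated function on $(\Lgrp{G})^I$ would be $f_1(\vec g) = \langle x, \vec g\,\xi\rangle_{W\otimes W^\vee} = f(\vec g^{-1})$; plugging into the pseudocharacter characterization \eqref{eqn:sigma-characterization} then gives $\nu^*(S_{I,f,\vec\gamma}) = f_1(\sigma(\vec\gamma)^{-1}) = f(\sigma(\vec\gamma)) = \nu(S_{I,f,\vec\gamma})$, i.e.\ $\nu^* = \nu$ identically --- the Chevalley involution never appears. Post-composing with an $\hat G^I$-equivariant isomorphism $W^\vee \cong W^\theta$ is a change of coordinates that does not alter the associated matrix-coefficient function $f_1$ (and that isomorphism is anyway only Schur-unique), so it cannot repair this.

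The correct mechanism, as in Propositions \ref{prop:S-Verdier} and \ref{prop:F-Verdier}, is that Verdier duality on the affine Grassmannian sends $\EuScript{S}_W$ to $\EuScript{S}_{W^{\vee,\theta}}$ --- the $\theta$-twist is part of the canonical isomorphism, not a subsequent repackaging --- and the transpose identity is $S_{I,W,\xi,x,\vec\gamma}^* = S_{I,W^{\vee,\theta},x,\xi,\vec\gamma^{-1}}$ (Lemma \ref{prop:transpose-0}), whose function is $f^\dagger(\vec g) = f\bigl(\Lgrp{\theta}(\vec g)^{-1}\bigr)$. That $\theta$ is what produces $\nu^* \leftrightarrow \Lgrp{\theta}\circ\sigma$ in Proposition \ref{prop:transpose-parameter}. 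Your ``main obstacle'' paragraph does gesture towards the Satake origin of $\Lgrp{\theta}$, so the fix is local, but as stated the intermediate formula would collapse the argument to a tautology. Two smaller remarks: the globalization step requires preparatory reductions (to $\pi$ supercuspidal via compatibility with parabolic induction, then to $\pi$ integral with finite-order restriction of $\omega_\pi$ to $A_G$ via twisting by a character trivial on $G(F)^1$) before the Poincar\'e-series construction applies, and the geometric source of the pairing is not a reversal of shtuka legs but Verdier duality on $\Cht_{N,I,W}/\Xi$ together with its invariance under the partial Frobenius morphisms $F_J$ (Corollary \ref{prop:F_J-invariance}).
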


Conjecture \ref{conj:AVP} comprises two layers: the second one is due to \cite{Pra18}. In this article, we will focus exclusively on the first layer thereof.

An obvious precondition of the Adams--Vogan--Prasad conjecture is the existence of a map $\Pi(G) \to \Phi(G)$, baptized the \emph{Langlands parameterization}, which has been constructed for many groups in various ways.
\begin{itemize}
	\item When $F$ is Archimedean, the local Langlands correspondence is Langlands' paraphrase of Harish-Chandra's works. The ``first layer'' of the Adams--Vogan--Prasad conjecture is established by Adams--Vogan \cite{AV16}, and Kaletha \cite[Theorem 5.9]{Kal13} obtained the necessary refinement for the ``second layer''.
	\item When $F$ is non-Archimedean of characteristic zero and $G$ is a symplectic or quasisplit orthogonal group, Kaletha \cite[Theorem 5.9, Corollary 5.10]{Kal13} verified the Adams--Vogan--Prasad conjecture in terms of Arthur's \emph{endoscopic classification} of representations, which offers the local Langlands correspondence for these groups.
	\item For non-Archimedean $F$ and general $G$, Kaletha \cite[\S 6]{Kal13} also verified the conjecture for the depth-zero and epipelagic supercuspidal $L$-packets, constructed by DeBacker--Reeder and Kaletha using induction from open compact subgroups. 
\end{itemize}

The aim of this article is to address the first layer of Conjecture \ref{conj:AVP} when $F$ is a non-Archimedean local field of characteristic $p > 0$ and $G$ is arbitrary, in terms of the Langlands parameterization $\Pi(G) \to \Phi(G)$ of A.\ Genestier and V.\ Lafforgue \cite{GL17}. Their method is based on the geometry of the moduli stack of \emph{restricted chtoucas}, intimately related to the global Langlands parameterization of cuspidal automorphic representations by V.\ Lafforgue \cite{Laf18}. Accordingly, our representations $\pi$ will be realized on $\overline{\Q_\ell}$-vector spaces, where $\ell$ is a prime number not equal to $p$, and the $L$-group $\Lgrp{G}$ is viewed as a $\overline{\Q_\ell}$-group. As $\CC \simeq \overline{\Q_\ell}$ as abstract fields, passing to $\overline{\Q_\ell}$ does not alter the smooth representation theory of $G(F)$. On the other hand, there are subtle issues such as the independence of $\ell$ in the Langlands parameterization, which we refer to \cite[\S 12.2.4]{Laf18} for further discussions.

Our main local result is
\begin{thm}[Theorem \ref{prop:local-contragredient}]\label{prop:A}
	Let $F$ be a non-Archimedean local field of characteristic $p > 0$ and $G$ be a connected reductive $F$-group. Fix $\ell \neq p$ as above. If an irreducible smooth representation $\pi$ of $G(F)$ has parameter $\phi \in \Phi(G)$ under the Langlands parameterization of Genestier--Lafforgue, then $\check{\pi}$ has parameter $\Lgrp{\theta} \circ \phi$.
\end{thm}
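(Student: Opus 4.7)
The plan is to reduce the local theorem to a statement about cuspidal automorphic representations over a global function field, and to prove that global statement by describing the transposes of V.\ Lafforgue's excursion operators with respect to the Petersson inner product. First I would globalize: choose a smooth projective curve $X$ over a finite field with function field $K$ and a place $x$ such that $K_x \simeq F$, extend $G$ to a reductive group over an open subscheme of $X$, and realize $\pi$ as the local component $\Pi_x$ of some cuspidal automorphic representation $\Pi$ of $G(\A_K)$. By construction of the Genestier--Lafforgue parameterization from moduli of restricted chtoucas, it is compatible at every place with V.\ Lafforgue's global parameterization, so it suffices to prove that if $\Pi$ has global Langlands parameter $\sigma : \Gal(\overline{K}|K) \to \Lgrp{G}(\overline{\Q_\ell})$, then the (still cuspidal) contragredient $\check{\Pi}$ has parameter $\Lgrp{\theta} \circ \sigma$.

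V.\ Lafforgue associates to a cuspidal $\Pi$ a character $\nu_\Pi$ of the excursion algebra, generated by operators $S_{I, f, (\gamma_i)_i}$, where $f$ is a $\hat{G}$-invariant regular function on $\hat{G}^I$ and $\gamma_i \in \Gal(\overline{K}|K)$; this character determines $\sigma$. The Chevalley involution induces an involution $\Theta$ of the excursion algebra sending $(I, f, (\gamma_i)_i)$ to $(I, f \circ (\Lgrp{\theta})^I, (\gamma_i)_i)$, and under Lafforgue's dictionary the parameter $\Lgrp{\theta} \circ \sigma$ corresponds precisely to the character $\nu_\Pi \circ \Theta$. The global assertion thereby reduces to the identity $\nu_{\check{\Pi}} = \nu_{\Pi} \circ \Theta$.

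To establish this, I would compute the transpose of $S_{I, f, (\gamma_i)_i}$ with respect to the Petersson pairing, which identifies $\check{\Pi}$ with the smooth dual of $\Pi$. The excursion operators are manufactured from creation and annihilation morphisms composed with partial Frobenii acting on $\ell$-adic cohomology of stacks of chtoucas, and their transposes are governed by Poincar\'e--Verdier duality on those stacks together with the geometric Satake correspondence. The crucial input is that, on the Satake side, Verdier duality is the geometric incarnation of the contragredient functor on representations of $\hat{G}$, which in turn coincides with precomposition by the Chevalley involution $\Lgrp{\theta}$. Combined with the fact that transposing a partial Frobenius at $\gamma_i$ yields the partial Frobenius at $\gamma_i^{-1}$, and that $\hat{G}$-invariance of $f$ absorbs the switch from $(\gamma_i)$ to $(\gamma_i^{-1})$ after applying $\Lgrp{\theta}$, this identifies the transpose of $S_{I, f, (\gamma_i)_i}$ with $S_{I, f \circ (\Lgrp{\theta})^I, (\gamma_i)_i}$, giving the desired equality of characters.

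The main obstacle will be the careful bookkeeping of dualities: the orientations and Tate twists arising from Poincar\'e--Verdier duality on the non-proper stacks of chtoucas, consistency between Satake-side Verdier duality and the Chevalley-involution description of the contragredient of $\hat{G}$-representations, and the precise behaviour of the creation/annihilation maps and partial Frobenii under transposition. A secondary point is to arrange the globalization so that the prescribed local component at $x$ is achieved and cuspidality of $\check{\Pi}$ is preserved, which should be routine via Poincar\'e series or trace-formula techniques.
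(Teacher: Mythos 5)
Your overall strategy coincides with the paper's: globalize $\pi$ to a cuspidal automorphic representation, reduce to a global statement about $L$-parameters via the invariance of the integration (Petersson) pairing, and compute the transposes of excursion operators through Verdier duality and geometric Satake. The key geometric inputs you identify---that Verdier duality on the Satake side implements the Chevalley-twisted contragredient $W\mapsto W^{\vee,\theta}$, and that transposing a partial Frobenius inverts the corresponding Galois element---are indeed the right ones. There are, however, two genuine gaps.

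First, your stated transpose formula for excursion operators is wrong. Setting $f^\dagger(\vec{g}):=f(\Lgrp{\theta}(\vec{g}^{-1}))$, the correct identity is $S_{I,f,\vec{\gamma}}^* = S_{I,f^\dagger,\vec{\gamma}^{-1}}$, not $S_{I,\,f\circ(\Lgrp{\theta})^I,\,\vec{\gamma}}$. The inversion of $\vec{\gamma}$ (from transposing the Frobenii) and the inversion of $\vec{g}$ inside $f^\dagger$ (from swapping the roles of the invariant vectors $x$ and $\xi$ when $W$ becomes $W^{\vee,\theta}$) both persist at the operator level. The step ``$\hat{G}$-invariance of $f$ absorbs the switch from $\vec{\gamma}$ to $\vec{\gamma}^{-1}$'' is not a valid manipulation---bi-$\hat{G}$-invariance of $f$ gives no relation between $S_{I,f,\vec{\gamma}}$ and $S_{I,f,\vec{\gamma}^{-1}}$ in the excursion algebra. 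The two inversions only cancel after evaluating on the pseudo-character, since $\nu(S_{I,f^\dagger,\vec{\gamma}^{-1}})=f^\dagger(\sigma(\gamma_0)^{-1},\ldots)=f(\Lgrp{\theta}\sigma(\gamma_0),\ldots)$; that is why your final conclusion about the parameter happens to be right, but the operator-level identity and the justification offered for it do not hold.

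Second, the globalization is not ``a secondary point'' for arbitrary irreducible smooth $\pi$. The Poincaré-series argument requires a matrix coefficient compactly supported modulo center, so it only applies to supercuspidal $\pi$, and moreover one must arrange $\omega_\pi|_{A_G(F)}$ to have finite order so that the cusp form lands in the space modulo the lattice $\Xi$. Reaching that position requires (a) a reduction to the supercuspidal case via cuspidal supports, using the compatibility of the Genestier--Lafforgue parameterization with parabolic induction together with the compatibility of the Chevalley involution with $L$-embeddings of Levi subgroups, and (b) an unramified character twist, whose effect on Genestier--Lafforgue parameters has to be controlled. Without these reductions, your opening claim ``realize $\pi$ as the local component $\Pi_x$ of a cuspidal automorphic representation'' is false for a general $\pi$, and the argument does not get off the ground.
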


\begin{rem}
	The prefix $L$ for local parameters and local packets is dropped for the following reason. The parameters of Genestier--Lafforgue are always \emph{semisimple} or completely reducible in the sense of Serre \cite{Se05}; in other words, the monodromy part of the Weil--Deligne parameter is trivial; see Lemma \ref{prop:semisimple-equiv}. As mentioned in \cite{GL17}, one expects that their parameter is the semi-simplification of the ``true'' $L$-parameter of $\pi$. Hence the packets $\Pi_\phi$ in question are larger than expected, and the Langlands parameterization we adopt is coarser, unless when $\phi$ does not factorize through any Levi $\Lgrp{M} \hookrightarrow \Lgrp{G}$, i.e.\ $\phi$ is \emph{semisimple and elliptic}.
\end{rem}

Our strategy is to reduce it into a global statement. Let $\mathring{F}$ be a global field of characteristic $p > 0$, say $\mathring{F} = \F_q(X)$ for a geometrically irreducible smooth proper $\F_q$-curve $X$, and set $\A = \A_{\mathring{F}}$. Let $G$ be a connected reductive $\mathring{F}$-group. Fix a level $N \subset X$, whence the corresponding congruence subgroup $K_N \subset G(\A)$ and the Hecke algebra $C_c(K_N \backslash G(\A)/K_N; \overline{\Q_\ell})$. Also fix a co-compact lattice $\Xi$ in $A_G(\mathring{F}) \backslash A_G(\A)$ where $A_G \subset G$ is the maximal central split torus. \textit{Grosso modo}, the global Langlands parameterization in \cite{Laf18} is deduced from a commutative $\overline{\Q_\ell}$-algebra $\mathcal{B}$ acting on the Hecke module
\[ H_{\{0\}, \mathbf{1}} := \Ccusp(\Bun_{G,N}(\F_q)/\Xi; \overline{\Q_\ell}) = \bigoplus_{\alpha \in \Ker^1(\mathring{F}, G)} \Ccusp\left( G_\alpha(\mathring{F}) \backslash G_\alpha(\A) / K_N \Xi; \overline{\Q_\ell}\right) \]
of $\overline{\Q_\ell}$-valued cusp forms, extended across pure inner forms indexed by $\Ker^1(\mathring{F}, G)$ (finite in number). The algebra $\mathcal{B}$ is generated by the \emph{excursion operators} $S_{I,f,\vec{\gamma}}$. For any character $\nu: \mathcal{B} \to \overline{\Q_\ell}$ of algebras, denote by $\mathfrak{H}_\nu$ the generalized $\nu$-eigenspace of $H_{\{0\}, \mathbf{1}}$. Then $H_{\{0\}, \mathbf{1}} = \bigoplus_\nu \mathfrak{H}_\nu$ as Hecke modules. Moreover, Lafforgue's machinery of $\Lgrp{G}$-pseudo-characters associates a semisimple $L$-parameter $\sigma: \Gal(\overline{\mathring{F}}|\mathring{F}) \to \Lgrp{G}(\overline{\Q_\ell})$ to $\nu$. In fact $\nu$ is determined by $\sigma$, so that we may write $\mathfrak{H}_\sigma = \mathfrak{H}_\nu$.

There is an evident Hecke-invariant bilinear form on $H_{\{0\}, \mathbf{1}}$, namely the \emph{integration pairing}
\[ \lrangle{h, h'} := \sum_{\alpha \in \Ker^1(\mathring{F}, G)} \int_{G_\alpha(\mathring{F}) \backslash G_\alpha(\A) / \Xi} hh', \quad h,h' \in H_{\{0\}, \mathbf{1}}, \]
with respect to some Haar measure on $G(\A) = G_\alpha(\A)$ which is $\Q$-valued on compact open subgroups. It is non-degenerate as easily seen by passing to $\overline{\Q_\ell} \simeq \CC$. Now comes our global theorem.
\begin{thm}[Theorem \ref{prop:global-contragredient}]\label{prop:B}
	If $\sigma, \sigma'$ are two semisimple $L$-parameters for $G$ such that $\lrangle{\cdot, \cdot}$ is nontrivial on $\mathfrak{H}_\sigma \otimes \mathfrak{H}_{\sigma'}$, then $\sigma' = \Lgrp{\theta} \circ \sigma$ up to $\hat{G}(\overline{\Q_\ell})$-conjugacy.
\end{thm}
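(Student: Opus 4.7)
The strategy is to compute the transpose of each of V.\ Lafforgue's excursion operators with respect to the integration pairing $\lrangle{\cdot,\cdot}$ and then show that, on the spectral side, this transpose corresponds to post-composing parameters with the Chevalley involution $\Lgrp\theta$. The pairing being nondegenerate and Hecke-invariant, it induces a $\overline{\Q_\ell}$-linear involution on $\End(H_{\{0\},\mathbf 1})$ sending any $S$ to its transpose $S^t$, characterised by $\lrangle{Sh,h'} = \lrangle{h, S^t h'}$. Once we know (from the transpose computation below) that this involution preserves $\mathcal B$ and so restricts to an algebra automorphism $\tau: \mathcal B \to \mathcal B$, a standard eigenspace-duality lemma for commutative algebras paired by a bilinear form yields: if $\lrangle{\cdot,\cdot}$ is nontrivial on $\mathfrak H_\sigma \otimes \mathfrak H_{\sigma'}$, then the two characters of $\mathcal B$ are intertwined by
\[ \nu_\sigma \;=\; \nu_{\sigma'} \circ \tau. \]

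The crux is then the explicit identification of $\tau$ on the excursion-operator generators. I would argue, for every finite set $I$, every diagonally invariant $f$, and every $\vec\gamma \in \Gamma^I$, the formula
\[ \tau\bigl(S_{I,f,\vec\gamma}\bigr) \;=\; S_{I,\, f \circ (\Lgrp\theta)^I,\, \vec\gamma}, \]
where $(\Lgrp\theta)^I$ denotes the diagonal action of the Chevalley involution on $\hat G^I$. Each excursion operator is assembled, via the moduli stack of chtoucas with legs indexed by $I$, from a representation $W$ of $\hat G^I$ together with a pair $(x,\xi)$ of diagonal-$\hat G$-invariants in $W$ and $W^*$ encoding $f$, creation and annihilation maps on cohomology attached to $(x,\xi)$, and partial Frobenii twisted by the Galois elements $\vec\gamma$. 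Poincar\'e--Verdier duality swaps creation with annihilation and replaces $W$ with its dual $W^*$; the canonical pinning-level isomorphism $W^* \simeq W \circ \Lgrp\theta$, which is a defining feature of the Chevalley involution on the Satake category, then rewrites the outcome as an excursion operator with the same $W$ and the same $\vec\gamma$ but with $f$ replaced by $f \circ (\Lgrp\theta)^I$. The partial Frobenii go through unchanged because the duality is absorbed into the passage $W \rightsquigarrow W^*$.

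Combining the two steps, on $\mathfrak H_{\sigma'}$ the operator $\tau(S_{I,f,\vec\gamma})$ acts with generalised eigenvalue
\[ f\bigl((\Lgrp\theta)^I \sigma'(\vec\gamma)\bigr) \;=\; \nu_{\Lgrp\theta \circ \sigma'}(S_{I,f,\vec\gamma}), \]
so Step 1 gives $\nu_\sigma = \nu_{\Lgrp\theta \circ \sigma'}$ on all excursion generators, hence on all of $\mathcal B$. V.\ Lafforgue's $\hat G$-pseudo-character uniqueness theorem identifies such characters of $\mathcal B$ with $\hat G(\overline{\Q_\ell})$-conjugacy classes of semisimple $L$-parameters, so $\sigma \sim \Lgrp\theta \circ \sigma'$; since $\Lgrp\theta$ is an involution of $\Lgrp G$, this is equivalent to $\sigma' \sim \Lgrp\theta \circ \sigma$, as desired.

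The main obstacle is the transpose computation of the second paragraph. The abstract eigenspace lemma and the appeal to pseudo-characters are essentially formal, but identifying the transpose of an excursion operator requires a delicate interplay between Verdier duality on the (truncated) chtoucas moduli, the Satake equivalence---under which the tautological duality $W \mapsto W^*$ is realised by $\Lgrp\theta$---and the geometric partial Frobenii that carry the $\vec\gamma$-dependence. In particular one must verify that the Chevalley involution, and not merely some outer twist or the non-algebraic inversion $g \mapsto g^{-1}$, is exactly what emerges from the pinning-level self-duality of $W$; this is the step in which $\Lgrp\theta$ enters the argument in an essential way.
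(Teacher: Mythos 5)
Your overall strategy coincides with the paper's: compute the transpose of excursion operators with respect to the integration pairing, observe that the transpose preserves $\mathcal B$, and run an eigenspace-duality argument. The abstract step (if $\lrangle{\cdot,\cdot}$ is nontrivial on $\mathfrak H_\sigma\otimes\mathfrak H_{\sigma'}$ then $\nu_{\sigma'} = \nu_\sigma^*$) is also as in the paper.

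However, your transpose formula and its justification contain a genuine gap. You claim $\tau(S_{I,f,\vec\gamma}) = S_{I,\,f\circ(\Lgrp\theta)^I,\,\vec\gamma}$, keeping $\vec\gamma$ unchanged, and justify this by asserting that ``the partial Frobenii go through unchanged because the duality is absorbed into the passage $W\rightsquigarrow W^*$.'' This is where the argument fails. The pairing $\lrangle{\cdot,\cdot}_{\overline{\eta^I}}$ obtained from Verdier duality is \emph{invariant} under the partial Frobenius morphisms $F_J$ (Corollary \ref{prop:F_J-invariance}); invariance means $\lrangle{F_J h, F_J h'} = \lrangle{h,h'}$, hence the transpose of $F_J$ is $F_J^{-1}$, not $F_J$. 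The same holds for the $\pi_1(\eta^I,\overline{\eta^I})$-action. Consequently the $\mathrm{FWeil}$-action that carries the $\vec\gamma$-dependence gets \emph{inverted} under transpose, and the correct formula is $S_{I,f,\vec\gamma}^* = S_{I,\,f^\dagger,\,\vec\gamma^{-1}}$ with $f^\dagger(\vec g) = f\bigl(\Lgrp\theta(\vec g)^{-1}\bigr)$ (Proposition \ref{prop:transpose-1}), which involves both the inversion of $\vec\gamma$ and the composite $\vec g \mapsto \Lgrp\theta(\vec g)^{-1}$ acting on $f$.

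Establishing this invariance of the pairing under $F_J$ is precisely the technical heart of the proof. It occupies all of \S\ref{sec:transposes} before the final computation: one must show (Lemma \ref{prop:invariance-1}, Lemma \ref{prop:invariance-2}, Proposition \ref{prop:Frob-invariance}) that the sheaf-level pairing $\mathfrak B^{\Xi,E}_{N,I,W}$ arising from Verdier duality on the chtoucas stack is compatible with the partial Frobenius correspondences, by carefully tracking base-change isomorphisms, cohomological correspondences $a_1^* \EuScript F \to a_2^! \EuScript F$, and trace maps across the truncations $\leq\mu$, $\leq\mu'$. Your proposal elides this entirely and asserts the opposite of what is true. (It is a separate question whether $S_{I,\,f^\dagger,\,\vec\gamma^{-1}}$ and $S_{I,\,f\circ\Lgrp\theta,\,\vec\gamma}$ happen to agree as operators in $\mathcal B$ --- they do agree after composing with any character $\nu$, but since $\mathcal B$ need not be reduced that does not settle it --- but even if they did, your justification would still be wrong, because the mechanism producing the formula is the $F_J$-invariance, not an absorption of the partial Frobenii into the Satake duality.)
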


Our local--global argument runs by first reducing Theorem \ref{prop:A} to the case that $\pi$ is integral supercuspidal such that $\omega_\pi$ has finite order when restricted to $A_G$; this step makes use of the compatibility of Langlands parameterization with parabolic induction, as established in \cite{GL17}. The second step is to globalize $\pi$ into a cuspidal automorphic representation $\mathring{\pi}$ with a suitable global model of $G$ and $\Xi$, satisfying $\mathring{\pi}^{K_N} \neq \{0\}$. The subspaces $\mathfrak{H}_\sigma$ of $H_{\{0\}, \mathbf{1}}$ might have isomorphic irreducible constituents in common, but upon modifying the automorphic realization, one can always assume that $\mathring{\pi}^{K_N}$ lands in some $\mathfrak{H}_\sigma$. An application of Theorem \ref{prop:B} and the local--global compatibility of Langlands parameterization \cite{GL17} will conclude the proof.

The proof of Theorem \ref{prop:B} relies upon the determination of the transpose $S \mapsto S^*$ of excursion operators with respect to $\lrangle{\cdot, \cdot}$, namely the Proposition \ref{prop:transpose-1}: 
\[ S_{I,f,\vec{\gamma}}^* = S_{I, f^\dagger, \vec{\gamma}^{-1}} \]
where $f \in \mathscr{O}(\hat{G} \bbslash (\Lgrp{G})^I \sslash \hat{G})$, the finite set $I$ and $\vec{\gamma} \in \Gal(\overline{\mathring{F}}|\mathring{F})^I$ are the data defining excursion operators, and $f^\dagger(\vec{g}) = f\left( \Lgrp{\theta}(\vec{g})^{-1}\right)$ for $\vec{g} \in (\Lgrp{G})^I$. This property entails that if $\nu: \mathcal{B} \to \overline{\Q_\ell}$ corresponds to $\sigma$, then $\nu^*: S \mapsto \nu(S^*)$ corresponds to $\Lgrp{\theta} \circ \sigma$ (Proposition \ref{prop:transpose-parameter}).

The starting point of the computation of transpose is the fact that $\lrangle{\cdot, \cdot}$ is of geometric origin: it stems from the Verdier duality on the moduli stack $\Cht^{(I_1, \ldots, I_k)}_{N,I}/\Xi$ of chtoucas. The Chevalley involution intervenes ultimately in describing the effect of Verdier duality in geometric Satake equivalence, which is in turn connected to  $\Cht^{(I_1, \ldots, I_k)}_{N,I}/\Xi$ via certain canonical smooth morphisms.

These geometric ingredients are already implicit in \cite{Laf18}. We just recast the relevant parts into our needs and supply some more details. In fact, the pairing $\lrangle{\cdot, \cdot}$ and its geometrization were used in a crucial way in older versions of \cite{Laf18}; that usage is now deprecated, and this article finds another application thereof.

Our third main result concerns the \emph{duality involution} proposed by Prasad in \cite[\S 3]{Pra18}. Assume that $G$ is quasisplit. Fix an additive character $\psi$ of $G$, an $F$-pinning $\mathcal{P}$ of $G$ and the corresponding Whittaker datum $\mathfrak{w}$; replacing $\psi$ by $\psi^{-1}$ yields another Whittaker datum $\mathfrak{w}'$. Prasad defined an involution $\iota_{G,\mathcal{P}}$ as the commuting product of the Chevalley involution $\theta = \theta_{\mathcal{P}}$ of $G$ and some inner automorphism $\iota_-$ which calibrates the Whittaker datum. Up to $G(F)$-conjugation, this recovers the MVW involutions on classical groups \cite[Chapitre 4]{MVW87} as well as the transpose-inverse on $\GL(n)$, whose relation with contragredient is well-known.

\begin{thm}[Theorem \ref{prop:duality-involution}]
	Let $\phi \in \Phi(G)$ be a semisimple parameter such that $\Pi_\phi$ contains a unique $\mathfrak{w}$-generic member $\pi$. Then $\Pi_{\Lgrp{\theta} \circ \phi}$ satisfies the same property with respect to $\mathfrak{w}'$, and $\check{\pi} \simeq \pi \circ \iota_{G, \mathcal{P}} \in \Pi_{\Lgrp{\theta} \circ \phi}$.
\end{thm}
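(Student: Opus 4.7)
Both $\check\pi$ and $\pi \circ \iota_{G,\mathcal{P}}$ will turn out to be $\mathfrak{w}'$-generic members of $\Pi_{\Lgrp{\theta}\circ\phi}$, so the theorem will follow once uniqueness of such a member is transferred from the hypothesis on $\Pi_\phi$ and $\mathfrak{w}$.

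Concretely, I would assemble four ingredients. (i) By Theorem \ref{prop:A}, $\check\pi \in \Pi_{\Lgrp{\theta}\circ\phi}$, and the standard behavior of Whittaker functionals under the contragredient (see e.g.\ \cite{MVW87}) makes $\check\pi$ a $\mathfrak{w}'$-generic representation. (ii) Since $\iota_-$ is inner, $\pi \circ \iota_{G,\mathcal{P}} \cong \pi \circ \theta_{\mathcal{P}}$ as abstract representations of $G(F)$; the Genestier--Lafforgue parameterization is $\Aut_F(G)$-equivariant, and the dual automorphism of $\theta_{\mathcal{P}}$ is exactly $\Lgrp{\theta}$, whence the parameter of $\pi \circ \iota_{G,\mathcal{P}}$ is $\Lgrp{\theta} \circ \phi$. (iii) Prasad's $\iota_-$ is rigged precisely so that $\iota_{G,\mathcal{P}}$ carries $(U, \chi)$ to $(U, \chi^{-1})$; hence $\pi \circ \iota_{G,\mathcal{P}}$ is $\mathfrak{w}'$-generic. (iv) Uniqueness of the $\mathfrak{w}'$-generic member of $\Pi_{\Lgrp{\theta}\circ\phi}$: for any two such candidates $\pi_1, \pi_2$, applying the contragredient together with Theorem \ref{prop:A} and the involutivity of $\Lgrp{\theta}$ puts each $\check\pi_i$ in $\Pi_\phi$ as a $\mathfrak{w}$-generic representation, and the hypothesis forces $\check\pi_1 \cong \check\pi_2$. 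Combining (i) and (iv) proves the first assertion of the theorem; combining (ii), (iii), (iv), and (i) yields $\check\pi \cong \pi \circ \iota_{G,\mathcal{P}}$.

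The substantive step is (ii). The $\Aut_F(G)$-equivariance of the Genestier--Lafforgue correspondence should follow from the naturality in $G$ of V.\ Lafforgue's excursion algebra and of the local--global compatibility of \cite{GL17}, in the same spirit as the local--global passage underlying Theorem \ref{prop:A}; one globalizes the $F$-automorphism together with $\pi$ and appeals to the corresponding equivariance for Lafforgue's global construction, which is visible from the functoriality of the moduli stacks of chtoucas in $G$. The identification of the dual of $\theta_{\mathcal{P}}$ with $\Lgrp{\theta}$ is essentially definitional once $\Lgrp{\theta}$ is reviewed in \S\ref{sec:Chevalley}, since both involutions are characterized by preserving a pinning and acting by $-1$ on the appropriate (co)character lattice of a maximal torus. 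This is the only point where I anticipate significant bookkeeping.
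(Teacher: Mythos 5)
Your proof follows the same overall structure as the paper: use Theorem \ref{prop:A}, the Whittaker-datum bookkeeping (the paper's Lemma \ref{prop:equiv-genericity}), the ``trivial functoriality'' of the Genestier--Lafforgue parameterization, and Lemma \ref{prop:dual-Chevalley}, then transfer uniqueness via contragredient. However, step (ii) contains a genuine error in the reasoning, even though the conclusion you draw from it is correct.

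You assert that ``since $\iota_-$ is inner, $\pi \circ \iota_{G,\mathcal{P}} \cong \pi \circ \theta_{\mathcal{P}}$ as abstract representations of $G(F)$.'' This is false in general: $\iota_-$ is conjugation by $\kappa \in T^{\mathrm{ad}}(F)$, and $\kappa$ need not lie in the image of $G(F) \to G^{\mathrm{ad}}(F)$. Conjugation by such an element of $G^{\mathrm{ad}}(F)$ can change the isomorphism class of a representation of $G(F)$ --- for $G = \SL_2$ and $F$ not containing $\sqrt{-1}$, $\kappa$ does not lift and $\pi \circ \iota_-$ may differ from $\pi$ inside the same $L$-packet. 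Indeed, the nontriviality of $\pi \circ \iota_-$ relative to $\pi$ is precisely what produces the character $\xi$ in the second layer of Conjecture \ref{conj:AVP}, so if your claim were true that layer would be vacuous. The correct statement is that $\iota_-$ and $\theta_{\mathcal{P}}$ have the same \emph{dual} automorphism --- the dual of any $G^{\mathrm{ad}}(F)$-inner automorphism is trivial --- and this is exactly what Lemma \ref{prop:dual-Chevalley} records: $\iota_{G,\mathcal{P}}$ and $\theta_{\mathcal{P}}$ induce the same pinned automorphism $\Lgrp{\theta}$ of $\hat{G}$. The passage from $\iota_{G,\mathcal{P}}$ to $\theta_{\mathcal{P}}$ therefore happens only on the dual side, never at the level of the $G(F)$-representation $\pi \circ \iota_{G,\mathcal{P}}$. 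One should apply ``trivial functoriality'' (your $\Aut_F(G)$-equivariance; this is \cite[Théorèmes 0.1, 8.1]{GL17} and need not be re-derived from scratch) directly to the automorphism $\iota_{G,\mathcal{P}}$, with dual $\Lgrp{\theta}$, to place $\pi \circ \iota_{G,\mathcal{P}}$ in $\Pi_{\Lgrp{\theta}\circ\phi}$. With that one repair, the rest of your argument --- steps (i), (iii), (iv) and the final assembly --- is sound and matches the paper's proof.
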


Besides the crucial assumption which is expected to hold for tempered parameters if one works over $\CC$ with true $L$-packets (called Shahidi's tempered $L$-packet conjecture \cite{Sh90}), the main inputs are Theorem \ref{prop:A} and the local ``trivial functoriality'' applied to $\iota_{G,\mathcal{P}}$ (see \cite[Théorèmes 0.1 et 8.1]{GL17}). Due to these assumptions and the coarseness of our LLC, one should regard this result merely as some heuristic for Prasad's conjectures in \cite{Pra18}.

To conclude this introduction, let us mention two important issues that are left unanswered in this article.
\begin{itemize}
	\item As in \cite{Laf18, GL17}, these techniques can be generalized to some \emph{metaplectic coverings}, i.e.\ central extensions of locally compact groups
	\[ 1 \to \mu_m(F) \to \tilde{G} \to G(F) \to 1 \]
	where $\mu_m(R) = \left\{ z \in R^\times: z^m=1 \right\}$ as usual; it is customary to assume $\mu_m(F) = \mu_m(\overline{F})$ here. Fix a character $\zeta: \mu_m \hookrightarrow \overline{\Q_\ell}^\times$. One studies the irreducible smooth representations $\pi$ of $\tilde{G}$ that are $\zeta$-\emph{genuine}, i.e.\ $\pi(\varepsilon) = \zeta(\varepsilon) \cdot \identity$ for all $\varepsilon \in \mu_m(F)$. The most satisfactory setting for metaplectic coverings is due to Brylinski--Deligne \cite{BD01} that classifies the central extensions of $G$ by $\mathbf{K}_2$ as sheaves over $(\Spec F)_{\mathrm{Zar}}$. Taking $F$-points and pushing-out from $\mathrm{K}_2(F)$ by norm-residue symbols yields a central extension above.

	The $L$-group $\Lgrp{\tilde{G}}_\zeta$ associated to a Brylinski--Deligne $\mathbf{K}_2$-central extension, $m$ and $\zeta$ has been constructed in many situations; see the references in \cite[\S 14]{Laf18}. Now consider the metaplectic variant of Conjecture \ref{conj:AVP}. If $\pi$ is $\zeta$-genuine, $\check{\pi}$ will be $\zeta^{-1}$-genuine so one needs a canonical $L$-isomorphism $\Lgrp{\theta}: \Lgrp{\tilde{G}}_\zeta \to \Lgrp{\tilde{G}}_{\zeta^{-1}}$; this is further complicated by the fact that $\Lgrp{G}_\zeta$ is not necessarily a split extension of groups. Although some results seem within reach when $G$ is split, it seems more reasonable to work in the broader $\mathbf{K}_2$-setting and incorporate the framework of Gaitsgory--Lysenko \cite{GL18} for the geometric part. Nonetheless, this goes beyond the scope of the present article.
	\item With powerful tools from $p$-adic Hodge theory, Fargues and Scholze proposed a program to obtain a local Langlands parameterization in characteristic zero, akin to that of Genestier--Lafforgue; see \cite{Far16} for an overview. It would certainly be interesting to try to adopt our techniques to characteristic zero. However, our key tools are global adélic in nature, whilst the setting of Fargues--Scholze is global in a different sense (over the Fargues--Fontaine curve). This hinders a direct translation into the characteristic zero setting.
\end{itemize}

\subsection*{Organization of this article}
In \S\ref{sec:review}, we collect the basic backgrounds on cusp forms, the integration pairing $\lrangle{\cdot, \cdot}$, contragredient representations and $L$-parameters, all in the $\ell$-adic setting.

In \S\ref{sec:statement}, we begin by defining the Chevalley involution with respect to a chosen pinning and its extension to the $L$-group. Then we state the main Theorems \ref{prop:local-contragredient}, \ref{prop:global-contragredient} in the local and global cases, respectively. The local--global argument and the heuristic on duality involutions (Theorem \ref{prop:duality-involution}) are also given there.

We give a brief overview of some basic vocabularies of \cite{Laf18} in \S\ref{sec:overview}. The only purpose of this section is to fix notation and serves as a preparation of the next section. As in \cite{Laf18, GL17}, we allow non-split groups as well.

The transposes of excursion operators are described in \S\ref{sec:transposes}. It boils down to explicating the interplay between Verdier duality and partial Frobenius morphisms on the moduli stack of chtoucas. As mentioned before, a substantial part of this section can be viewed as annotations to \cite{Laf18}, together with a few new computations. The original approach in \S\ref{sec:transposes} in an earlier manuscript has been substantially simplified following suggestions of V.\ Lafforgue.

In \S\S\ref{sec:overview}---\ref{sec:transposes}, we will work exclusively in the global setting.

\subsection*{Acknowledgements}
The author is deeply grateful to Alain Genestier, Vincent Lafforgue, Dipendra Prasad and Changjian Su for their helpful comments, answers and corrections. Thanks also goes to the referees for pertinent suggestions.

\subsection*{Conventions}
\addcontentsline{toc}{subsection}{Conventions}
Throughout this article, we fix a prime number $\ell$ distinct from the characteristic $p > 0$ of the fields under consideration. We also fix an algebraic closure $\overline{\Q_\ell}$ of the field $\Q_\ell$ of $\ell$-adic numbers.

The six operations on $\ell$-adic complexes are those defined in \cite{LO1,LO2}, for algebraic stacks locally of finite type over a reasonable base scheme, for example over $\Spec\F_q$ where $q$ is some power of $p$. Given a morphism $f$ of finite type between such stacks, the symbols $f_!$, $f_*$, etc.\ will always stand for the functors between derived categories $\cate{D}^b_c(\cdots, E)$ unless otherwise specified, where the field of coefficients $E$ is some algebraic extension of $\Q_\ell$. The perverse $t$-structure on such stacks is defined in \cite{LO3}; further normalizations will be recalled in \S\ref{sec:geometric-setup}. The constant sheaf associated to $E$ on such a stack $\mathcal{X}$ is denoted by $E_{\mathcal{X}}$.

We use the notation $C_c(X; E)$ the indicate the space of compactly supported smooth $E$-valued functions on a topological space $X$, where $E$ is any ring. Since we work exclusively over totally disconnected locally compact spaces, smoothness here means locally constant.

For a local or global field $F$, we denote by $\Weil{F}$ the Weil group $F$ with respect to a choice of separable closure $\overline{F}|F$, and by $I_F \subset \Gal(\overline{F}|F)$ the inertia subgroup. The arithmetic Frobenius automorphism is denoted by $\Frob$. If $F$ is local non-Archimedean, $\mathfrak{o}_F$ will stand for its ring of integers.

If $\mathring{F}$ is a global field, we write $\A = \A_{\mathring{F}} := \prod'_v \mathring{F}_v$ for its ring of adèles, where $v$ ranges over the places of $\mathring{F}$. We also write $\mathfrak{o}_v = \mathfrak{o}_{\mathring{F}_v}$ in this setting.

For a scheme $T$, we write:
\begin{compactitem}
	\item $\Delta: T \hookrightarrow T^I$ for the diagonal morphism, where $I$ is any set;
	\item $\pi_1(T, \overline{t})$ for the étale fundamental group with respect to a geometric point $\overline{t} \to T$, when $T$ is connected, normal and locally Noetherian;
	\item $\mathscr{O}(T)$ for the ring of regular functions on $T$;
	\item $T_B := T \dtimes{\Spec A} \Spec B$ if $T$ is a scheme over $\Spec A$, and $B$ is a commutative $A$-algebra;
	\item $E(T) := \text{Frac}\,\mathscr{O}(T)$ for the function field, when $T$ is an irreducible variety over a field $E$.
\end{compactitem}
Suppose that $T$ is a variety over a field. The geometric invariant-theoretic quotient of $T$ under the right action of some group variety $Q$, if it exists, is written as $T \sslash Q$. Similar notation pertains to left or bilateral actions.

Let $G$ be a connected reductive group over a field $F$. For any $F$-algebra $A$, denote the group of $A$-points of $G$ by $G(A)$, endowed with a topology whenever $A$ is. Denote by $Z_G$, $G^\text{der}$, $G^\text{ad}$ for the center, derived subgroup and the the adjoint group of $G$, respectively. Normalizers (resp.\ centralizers) in $G$ are written as $N_G(\cdot)$ (resp.\ $Z_G(\cdot)$). If $T \subset G$ is a maximal torus, we write $T^\text{ad}$, etc.\ for the corresponding subgroups in $G ^\text{ad}$, etc. The character and cocharacter groups of a torus $T$ are denoted by $X^*(T)$ and $X_*(T)$ as $\Z$-modules, respectively.

The $L$-group (resp.\ Langlands dual group) of $G$ is denoted by $\Lgrp{G}$ (resp.\ $\hat{G}$). We use the Galois form of $L$-groups: details will be given in \S\ref{sec:L-parameters}.

For an affine algebraic group $H$ over some field $E$, the additive category of finite-dimensional algebraic representations of $H$ will be denoted as $\cate{Rep}_E(H)$. The trivial representation is denoted by $\mathbf{1}$. For any object $W \in \cate{Rep}_E(H)$, we write $\check{W}$ or $W^\vee$ for its contragredient representation on $\Hom_E(W, E)$. For any automorphism $\theta$ of $H$, write $W^\theta$ for the representation on $W$ such that every $h \in H$ acts by $w \mapsto \theta(h) \cdot w$.

The same notation $\check{\pi}$ applies to the contragredient of a smooth representation $\pi$ of a locally compact totally disconnected group. This will be the topic of \S\ref{sec:rep}. We denote the central character of an irreducible smooth representation $\pi$ as $\omega_\pi$.

\section{Review of representation theory}\label{sec:review}
\subsection{Cusp forms}\label{sec:cusp-forms}
Let $\mathring{F}$ be a global field of characteristic $p > 0$. We may write $\mathring{F} = \F_q(X)$ where $q$ is some power of $p$, and $X$ is a smooth, geometrically irreducible proper curve over $\F_q$. Denote $\A = \A_{\mathring{F}}$. Fix a closed subscheme $N \subset X$ which is finite over $\F_q$, known as the level.

Let $G$ be a connected reductive group over $\mathring{F}$. We associate to $N$ a compact open subgroup
\[ K_N := \Ker\left[ G\left(\prod_{v \in |X|} \mathfrak{o}_v \right) \to G(\mathscr{O}(N)) \right] \; \subset G(\A). \]
Denote the maximal split central torus in $G$ by $A_G$. It is also known that there is a co-compact lattice
\[ \Xi \subset A_G(\mathring{F}) \backslash A_G(\A), \]
which we fix once and for all. The space $G(\mathring{F}) \backslash G(\A) /\Xi$ is known to have finite volume with respect to any Haar measure on $G(\A)$.

In what follows, we use a Haar measure on $G(\A)$ such that $\mathrm{mes}(K) \in \Q$ for any compact open subgroup $K$. The existence of such measures is established in \cite[Théorème 2.4]{Vig96}. The same convention pertains to the subgroups of $G$.

For all subextension $E|\Q_\ell$ of $\overline{\Q_\ell}|\Q_\ell$, we have the space
\[ C_c(G(\mathring{F}) \backslash G(\A) /\Xi; E) = \bigcup_{N: \text{levels}} C_c(G(\mathring{F}) \backslash G(\A)/K_N \Xi; E) \]
of smooth $E$-valued functions of compact support on $G(\mathring{F}) \backslash G(\A)/ \Xi$. Then $G(\A)$ acts on the left of $C_c(G(\mathring{F}) \backslash G(\A) /\Xi; E)$ by $(gf)(x) = f(xg)$. Accordingly, the space of $K_N$-invariants $C_c(G(\mathring{F}) \backslash G(\A)/K_N \Xi; E)$ is a left module under the unital $E$-algebra $C_c(K_N \backslash G(\A) / K_N; E)$, the Hecke algebra under convolution $\star$.

Our convention on Haar measures means that we can integrate $E$-valued smooth functions on $G(\mathring{F}) \backslash G(\A) /\Xi$, etc.

The subspace of $C_c(G(\mathring{F}) \backslash G(\A) /\Xi; E)$ of cuspidal functions
\[ \Ccusp(G(\mathring{F}) \backslash G(\A) / \Xi; E) = \bigcup_{N: \text{levels}} \Ccusp(G(\mathring{F}) \backslash G(\A) /K_N \Xi; E) \]
is defined by either
\begin{compactitem}
	\item requiring that the constant terms $f_P(x) = \int_{U(\mathring{F}) \backslash U(\A)} f(ux) \dd u$ are zero whenever $P = MU \subsetneq G$ is a parabolic subgroup;
	\item or using the criterion in terms of Hecke-finiteness in \cite[Proposition 8.23]{Laf18}.
\end{compactitem} 

We record two more basic facts.
\begin{itemize}
	\item The $E$-vector space $\Ccusp(G(\mathring{F}) \backslash G(\A) /K_N \Xi; E)$ is finite-dimensional. This result is originally due to Harder, and can be deduced from the uniform bound on supports of such functions in \cite[I.2.9]{MW94}.

	\item As a smooth $G(\A)$-representation, $\Ccusp(G(\mathring{F}) \backslash G(\A) /\Xi; E)$ is absolutely semisimple, i.e.\ it is semisimple after $- \otimes_E \overline{\Q_\ell}$; see \cite[VIII.226]{BouAlg-8}. Indeed, the semisimplicity in the case $E = \overline{\Q_\ell} \simeq \CC$ is well-known.
\end{itemize}
In parallel, $\Ccusp(G(\mathring{F}) \backslash G(\A)/ K_N \Xi; E)$ is also absolutely semisimple as a $C_c(K_N \backslash G(\A)/K_N; E)$-module. Recall the module structure: $f \in C_c(K_N \backslash G(\A)/K_N; E)$ acts on $h$ as
\begin{equation}\label{eqn:Hecke-alg-convolution}
	(f \cdot h)(x) := \int_{K_N \backslash G(\A) / K_N} h(xg) f(g) \dd g = (h \star \check{f})(x), \quad x \in G(\A)
\end{equation}
where $\check{f}(g) = f(g^{-1})$ and the convolution $\star$ is defined in the usual manner.

We record the following standard result for later use.
\begin{proposition}\label{prop:K_N-invariants}
	For every $G(\A)$-representation $\mathring{\pi}$, assumed to be smooth, let $\mathring{\pi}^{K_N}$ be the space of $K_N$-invariant vectors. It is a left module under $C_c(K_N \backslash G(\A)/K_N; \overline{\Q_\ell})$.
	\begin{enumerate}[(i)]
		\item For all irreducible $G(\A)$-representations $\mathring{\pi}_1, \mathring{\pi}_2$ generated by $K_N$-invariants, we have $\mathring{\pi}_1 \simeq \mathring{\pi}_2 \iff \mathring{\pi}_1^{K_N} \simeq \mathring{\pi}_2^{K_N}$ as simple $C_c(K_N \backslash G(\A)/K_N; \overline{\Q_\ell})$-modules.
		\item Given any decomposition $\Ccusp(G(\mathring{F}) \backslash G(\A) /\Xi; \overline{\Q_\ell}) = \bigoplus_{\mathring{\pi} \in \Pi} \mathring{\pi}$ into irreducibles, where $\Pi$ is a set (with multiplicities) of irreducible subrepresentations, we have
		\[ \Ccusp(G(\mathring{F}) \backslash G(\A)/ K_N \Xi; \overline{\Q_\ell}) = \bigoplus_{\substack{\mathring{\pi} \in \Pi \\ \mathring{\pi}^{K_N} \neq 0 }} \mathring{\pi}^{K_N} \]
		in which each $\mathring{\pi}^{K_N}$ is simple.
		\item For every irreducible $G(\A)$-representation $\mathring{\pi}$ generated by $K_N$-invariants, we have a natural isomorphism of multiplicity spaces
		\begin{multline*}
			\Hom_{G(\A)\dcate{Rep}}\left( \mathring{\pi}, \Ccusp(G(\mathring{F}) \backslash G(\A) /\Xi; \overline{\Q_\ell}) \right) \rightiso \\
			\Hom_{C_c(K_N \backslash G(\A)/K_N; \overline{\Q_\ell})\dcate{Mod} }\left( \mathring{\pi}^{K_N}, \Ccusp(G(\mathring{F}) \backslash G(\A)/ K_N \Xi; \overline{\Q_\ell}) \right).
		\end{multline*}
	\end{enumerate}
	The property (i) actually holds for representations of $G(\mathring{F}_v)$ and of its Hecke algebras, for any place $v$ of $\mathring{F}$.
	
	The $\Ccusp$ in (ii), (iii) can be replaced by $\bigoplus_{\alpha \in \Ker^1(\mathring{F}, G)} \Ccusp(G_\alpha(\mathring{F}) \backslash G(\A) /\Xi; \overline{\Q_\ell})$; see \eqref{eqn:alpha}.
\end{proposition}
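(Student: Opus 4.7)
My plan is to reduce all three parts to the standard equivalence (due essentially to Bernstein--Zelevinsky) between smooth $G(\A)$-representations generated by their $K_N$-invariants and left modules over $\mathcal{H}_N := C_c(K_N \backslash G(\A)/K_N; \overline{\Q_\ell})$, realised by $V \mapsto V^{K_N}$. First I would set up the ambient Hecke algebra $\mathcal{H} := C_c(G(\A); \overline{\Q_\ell})$ with its convolution and the idempotent $e_{K_N} := \mathrm{mes}(K_N)^{-1} \mathbf{1}_{K_N}$, identifying $V^{K_N} = e_{K_N} V$ as a module over $\mathcal{H}_N \simeq e_{K_N} \mathcal{H} e_{K_N}$ for any smooth $V$. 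A brief preliminary is to track the $\check{f}$ convention in \eqref{eqn:Hecke-alg-convolution}, which amounts only to an antiinvolution of $\mathcal{H}$ and does not affect anything below.

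The key lemma I would prove is: for $\mathring{\pi}$ irreducible smooth with $\mathring{\pi}^{K_N} \neq 0$, the $\mathcal{H}_N$-module $\mathring{\pi}^{K_N}$ is simple. The argument is short: given $0 \neq v \in \mathring{\pi}^{K_N}$, irreducibility gives $\mathcal{H} \cdot v = \mathring{\pi}$, and applying $e_{K_N}$ yields $\mathring{\pi}^{K_N} = e_{K_N} \mathcal{H} e_{K_N} \cdot v = \mathcal{H}_N \cdot v$. Next I would establish full faithfulness of $V \mapsto V^{K_N}$: for $V, W$ smooth and generated by their $K_N$-invariants, any $\mathcal{H}_N$-linear $\varphi \colon V^{K_N} \to W^{K_N}$ extends to a $G(\A)$-equivariant $\tilde\varphi \colon V \to W$ by $\tilde\varphi(h v) := h \varphi(v)$ for $h \in \mathcal{H}$, $v \in V^{K_N}$. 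Well-definedness is the standard verification, checked by fixing a refining compact open $K \subset K_N$ and exploiting $\mathcal{H}_N$-linearity of $\varphi$ at each $K$-invariant slice.

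Granted these, item (i) is immediate: a nonzero $\mathcal{H}_N$-isomorphism $\mathring{\pi}_1^{K_N} \rightiso \mathring{\pi}_2^{K_N}$ extends to a nonzero $G(\A)$-equivariant map between irreducibles, necessarily an isomorphism. For (iii) I would invoke full faithfulness after observing that any $G(\A)$-equivariant map $\mathring{\pi} \to \Ccusp(\cdots)$ lands in the $\mathring{\pi}$-isotypic subrepresentation, which is generated by its $K_N$-invariants. For (ii), I would apply the exact functor $(-)^{K_N}$ to the given absolutely semisimple decomposition and discard the zero summands; each remaining term is simple by the central lemma. The extension across $\Ker^1(\mathring{F}, G)$ is automatic, because the adelic decomposition $\bigsqcup_\alpha G_\alpha(\mathring{F}) \backslash G_\alpha(\A) / K_N \Xi$ is $G(\A)$-stable componentwise (under $G_\alpha(\A) = G(\A)$), so the entire argument applies summand by summand. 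I do not anticipate any serious obstacle; the effort is purely bookkeeping of the $\check{f}$ convention and of the idempotent formalism in the adelic setting.
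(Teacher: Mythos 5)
The overall structure of your argument tracks the paper's, which also reduces to the standard correspondence $V \mapsto V^{K_N}$ citing \cite[I.3, III.1.5, III.1.8, B.II]{Ren10}, together with semisimplicity and Schur's Lemma. Your treatment of (ii), via exactness of $e_{K_N}\cdot(-)$ and the simplicity of $\mathring{\pi}^{K_N}$, is correct and essentially identical.

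However, your stated \emph{full faithfulness} lemma is false in the generality you claim. You assert: for $V, W$ smooth and generated by their $K_N$-invariants, any $\mathcal{H}_N$-linear $\varphi\colon V^{K_N}\to W^{K_N}$ extends to a $G(\A)$-map $V\to W$, with ``well-definedness [being] the standard verification.'' There is no such verification. Take $G=\GL_2(\Q_p)$, $K=\GL_2(\Z_p)$, $V=I(\chi)$ an unramified principal series having the trivial representation $\mathbf{1}$ as a subrepresentation and Steinberg as quotient, and $W=\mathbf{1}$. Both $V$ and $W$ are generated by their $K$-invariants, $\dim V^K = \dim W^K = 1$, and the Satake parameters match, so $\Hom_{\mathcal{H}_K}(V^K, W^K)\neq 0$; but $\Hom_G(V,W)=0$ since $\mathbf{1}$ is a subrepresentation, not a quotient, of $I(\chi)$. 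The correct statement (and the one in \cite[III.1.5]{Ren10}) restricts the source to the full subcategory of representations all of whose nonzero quotients have nonzero $K_N$-invariants --- in particular, to irreducibles with nonzero $K_N$-invariants. Your applications are in fact all of this form: for (i) both $V$ and $W$ are irreducible, and the lifting argument there does go through (e.g.\ via the adjunction $\Hom_{\mathcal{H}_N}(V^{K_N},W^{K_N})\simeq\Hom_G(j_!V^{K_N},W)$ and noting that the kernel of $j_!V^{K_N}\twoheadrightarrow V$ has trivial $K_N$-invariants, so dies in $W$); for (iii) you reduce to $V=\mathring{\pi}$ irreducible and $W$ the $\mathring{\pi}$-isotypic part, where Schur's Lemma handles both sides. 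So the conclusion of each part is correct, but you should replace the blanket lemma with the restricted form --- otherwise the well-definedness of $\tilde\varphi(hv):=h\varphi(v)$ genuinely fails.
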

\begin{proof}
	By semisimplicity, $\Ccusp(G(\mathring{F}) \backslash G(\A) /\Xi; \overline{\Q_\ell})$ (or the $\bigoplus_\alpha$ version) decomposes uniquely into $W \oplus W'$ such that
	\begin{compactitem}
		\item $W$ is a subrepresentation isomorphic to a direct sum of irreducibles, each summand is generated by $K_N$-invariants;
		\item $W'$ is a subrepresentation satisfying $(W')^{K_N} = \{0\}$.
	\end{compactitem}
	For (ii)---(iii), it suffices to look at the $G(\A)$-representation $W$ and the $C_c(K_N \backslash G(\A)/K_N; \overline{\Q_\ell})$-module $W^{K_N}$; both are semisimple. The required assertions follow from the standard equivalences between categories in \cite[I.3 and III.1.5]{Ren10} and Schur's Lemma \cite[III.1.8 and B.II]{Ren10}.
\end{proof}

Next, we introduce the moduli stack $\Bun_{G,N}$ over $\F_q$ of $G$-torsors on $X$ with level $N$ structures: it maps any $\F_q$-scheme $S$ to the groupoid
\begin{align*}
	\Bun_{G,N}(S) & = \left\{ \begin{array}{r|l}
		(\mathcal{G}, \psi) & \mathcal{G}: G\text{-torsor over } X \times S \\
		& \psi: \mathcal{G}|_{N \times S} \rightiso G|_{N \times S} \\
		& \quad \text{a trivialization over $N$}
	\end{array}\right\}, \\
	\Bun_G & := \Bun_{G, \emptyset}.
\end{align*}
For this purpose, we need suitable models of $G$ over $X$. Let $U \subset X$ be the maximal open subscheme such that $G$ extends to a connected reductive $U$-group scheme. We follow \cite[\S 12.1]{Laf18} to take parahoric models at the formal neighborhoods of all points of $X \smallsetminus U$. Glue these parahoric models with the smooth model over $U$, à la Beauville--Laszlo, to yield a smooth affine $X$-group scheme with geometrically connected fibers, known as a \emph{Bruhat--Tits group scheme} over $X$; see also \cite[\S 1]{He10}.

Regard $\Bun_{G,N}(\F_q)$ as a set, on which $\Xi$ acts naturally. As explained in \cite{Laf18}, we have
\begin{equation}\label{eqn:alpha}
	\Bun_{G,N}(\F_q) = \bigsqcup_{\alpha \in \Ker^1(\mathring{F}, G)} G_\alpha(\mathring{F}) \backslash G_\alpha(\A) / K_N
\end{equation}
where
\begin{compactitem}
	\item $\Ker^1(\mathring{F}, G)$ is the kernel of $\mathrm{H}^1(\mathring{F}, G) \to \prod_{v \in |X|} \mathrm{H}^1(\mathring{F}_v, G)$;
	\item to each $\alpha \in \Ker^1(\mathring{F}, G)$ is attached a locally trivial pure inner twist $G_\alpha$ of $G$, and we fix an identification $G_\alpha(\A) \simeq G(\A)$.
\end{compactitem}
The decomposition is compatible with $\Xi$-actions. The pointed set $\Ker^1(\mathring{F}, G)$ is finite; it is actually trivial when $G$ is split. As before, we have the spaces
\[ \Ccusp\left( \Bun_{G,N}(\F_q)/\Xi; E \right) = \bigoplus_{\alpha \in \Ker^1(\mathring{F}, G)} \Ccusp\left( G_\alpha(\mathring{F}) \backslash G_\alpha(\A) / K_N \Xi; E \right). \]
The cuspidality on the left-hand side can be defined in terms of Hecke-finiteness as before. We shall also use compatible Haar measures on various $G_\alpha(\A)$.

From the viewpoint of harmonic analysis, the mere effect of working with $\Bun_{G,N}(\F_q)$ is to consider all the inner twists from $\Ker^1(\mathring{F}, G)$ at once. See also \cite[\S 12.2.5]{Laf18}.

\subsection{Integration pairing}\label{sec:integration-pairing}
Let $E$ be a subextension of $\overline{\Q_\ell}|\Q_\ell$.

\begin{definition}\label{def:integration-pairing}
	With the Haar measures as in \S\ref{sec:cusp-forms}, we define the \emph{integration pairing}
	\begin{gather*}
		\lrangle{\cdot, \cdot}: \Ccusp(G(\mathring{F}) \backslash G(\A) /\Xi; E) \dotimes{E} \Ccusp(G(\mathring{F}) \backslash G(\A) / \Xi; E) \longrightarrow E \\
		h \otimes h' \longmapsto \lrangle{h, h'} := \int_{G(\mathring{F}) \backslash G(\A) / \Xi} hh'.
	\end{gather*}
\end{definition}
The pairing is clearly $E$-bilinear, symmetric and $G(\A)$-invariant. There is an obvious variant for not necessarily cuspidal functions.

\begin{lemma}
	The pairing $\lrangle{\cdot, \cdot}$ above is absolutely non-degenerate, i.e.\ its radical equals $\{0\}$ after $- \otimes_E \overline{\Q_\ell}$.
\end{lemma}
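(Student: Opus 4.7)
The plan is to reduce everything to $\mathbb{C}$ and exploit complex conjugation. First I would observe that the pairing $\langle \cdot , \cdot \rangle$ is, in a precise sense, rational in nature. Fix $h,h' \in \Ccusp(G(\mathring{F}) \backslash G(\A) /\Xi; E)$ and pick a level $N$ such that both $h$ and $h'$ are $K_N$-invariant. Because both functions are compactly supported, the integral $\int_{G(\mathring{F}) \backslash G(\A)/\Xi} hh'$ is actually a \emph{finite} sum indexed by $(G(\mathring{F}) \backslash G(\A)/K_N\Xi) \cap \Supp(hh')$, weighted by volumes of compact open subsets with respect to the chosen Haar measure. By the convention fixed in \S\ref{sec:cusp-forms}, these weights lie in $\Q$. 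Consequently the pairing is given by a universal $\Q$-linear expression in the values of $h$ and $h'$, so for any field morphism $\iota: E \to E'$ the pairings on $\Ccusp(\ldots; E)$ and on $\Ccusp(\ldots; E')$ are compatible with the map induced by $\iota$.

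Thanks to this functoriality, the radical of $\langle\cdot,\cdot\rangle$ on $\Ccusp(\ldots; E) \otimes_E \overline{\Q_\ell}$ equals the radical on $\Ccusp(\ldots; \overline{\Q_\ell})$, and fixing an abstract field isomorphism $\overline{\Q_\ell} \simeq \CC$ reduces the absolute non-degeneracy statement to the case $E = \CC$.

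Over $\CC$, complex conjugation $h \mapsto \bar{h}$ clearly preserves compact support, smoothness and $K_N$-invariance, and it commutes with constant-term integrals, so it preserves cuspidality. Thus conjugation defines an involution of $\Ccusp(G(\mathring{F}) \backslash G(\A) /\Xi; \CC)$. For any nonzero $h$ in this space, we have
\[
	\langle h, \bar{h} \rangle = \int_{G(\mathring{F}) \backslash G(\A) / \Xi} |h|^2 > 0,
\]
since $|h|^2$ is a nonnegative, not identically zero, compactly supported continuous function on a locally compact space of positive measure on $\Supp(h)$. Therefore no nonzero $h$ lies in the radical, and $\langle\cdot,\cdot\rangle$ is non-degenerate over $\CC$, hence absolutely non-degenerate over $E$.

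I do not anticipate a substantive obstacle; the only point to be a little careful about is the field-transfer step, which is why I make the rationality of the integral explicit. The same argument (with the obvious variant of complex conjugation acting on each summand) handles the $\bigoplus_\alpha$ version over $\Bun_{G,N}(\F_q)$ referred to in the introduction.
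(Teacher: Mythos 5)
Your proof is correct and takes essentially the same route as the paper: reduce to $E=\overline{\Q_\ell}$, transport to $\CC$ via an abstract field isomorphism, and conclude from $\int h\bar h \geq 0$ with equality iff $h=0$. The paper's proof is terser (it simply asserts the reduction to $\overline{\Q_\ell}$ is ``legitimate''); your explicit rationality argument for the $\Q$-valued volume weights supplies exactly the justification the paper leaves implicit.
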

\begin{proof}
	It is legitimate to assume $E = \overline{\Q_\ell}$, and there exists an isomorphism of fields $\overline{\Q_\ell} \simeq \CC$. The non-degeneracy over $\CC$ is well-known: we have $\int h\overline{h} \geq 0$, and equality holds if and only if $h=0$.
\end{proof}

\begin{remark}\label{rem:pairing-level}
	For a chosen level $N \subset X$, we have an analogous pairing
	\begin{gather*}
		\lrangle{\cdot, \cdot}: \Ccusp(G(\mathring{F}) \backslash G(\A) /K_N \Xi; E) \dotimes{E} \Ccusp(G(\mathring{F}) \backslash G(\A) / K_N \Xi; E) \longrightarrow E \\
		h \otimes h' \longmapsto \lrangle{h, h'} := \int_{G(\mathring{F}) \backslash G(\A) /K_N \Xi} hh'.
	\end{gather*}
	The integration here is actually a ``stacky'' sum over $G(\mathring{F}) \backslash G(\A) /K_N \Xi$, i.e.\ $\lrangle{h,h'}$ equals that of Definition \ref{def:integration-pairing} if one starts with a Haar measure on $G(\A)$ with $\mathrm{mes}(K_N)=1$. It is also $E$-bilinear, symmetric, absolutely non-degenerate and invariant in the sense that
	\[ \lrangle{f \cdot h, \; h'} = \lrangle{h, \; \check{f} \cdot h'}, \quad f \in C_c(K_N \backslash G(\A) /K_N; E), \]
	see \eqref{eqn:Hecke-alg-convolution}. There is an obvious variant for not necessarily cuspidal functions.
\end{remark}

The spaces in question being finite-dimensional, it makes sense to talk about the \emph{transpose} of a linear operator. For example, the transpose of the left multiplication by $f$ is given by that of $\check{f}$.

As in \S\ref{sec:cusp-forms}, the integration pairing extends to
\begin{gather*}
	\lrangle{\cdot, \cdot}: \Ccusp\left( \Bun_{G,N}(\F_q)/\Xi; E \right) \dotimes{E} \Ccusp\left( \Bun_{G,N}(\F_q)/\Xi; E \right) \longrightarrow E \\
	h \otimes h' \longmapsto \int_{\Bun_{G,N}(\F_q)/\Xi} hh'.
\end{gather*}
This is the orthogonal sum of the integrations pairings on various $G_\alpha(\A)$.

\subsection{Representations}\label{sec:rep}
In this subsection, we let $F$ be a local field of characteristic $p > 0$. Denote the cardinality of the residue field of $F$ as $q$. Let $G$ be a connected reductive $F$-group. The smooth representations of $G(F)$ will always be realized on $\overline{\Q_\ell}$-vector spaces. Irreducible smooth representation of $G(F)$ are admissible; see \cite[VI.2.2]{Ren10}.

The smooth characters of $G(F)$ are homomorphisms $G(F) \to \overline{\Q_\ell}^\times$ with open kernel. We will need to look into a class of particularly simple characters, namely those trivial on the open subgroup
\begin{equation}\label{eqn:G(F)1}
	G(F)^1 := \bigcap_{\chi \in X^*(G)} \Ker|\chi|_F
\end{equation}
of $G(F)$, where $X^*(G) := \Hom_{\text{alg.\ grp}}(G, \Gm)$ and
\[ |\cdot|_F: F^\times \twoheadrightarrow q^\Z \subset \Q^\times \]
is the normalized absolute value on $F$. Note that $G(F)/G(F)^1 \simeq \Z^r$ with $r := \mathrm{rk}_\Z X^*(G)$. Moreover, $G(F)^1 \supset G^\mathrm{der}(F) = G^\mathrm{der}(F)^1$.

For any smooth character $\omega$ of $Z_G(F)$, denote by $C_c(G(F), \omega)$ the space of functions $f: G(F) \to \overline{\Q_\ell}$ such that $f(zg) = \omega(z)f(g)$ for all $z \in Z_G(F)$ and $\Supp(f)$ is compact modulo $Z_G(F)$.

Let $\Ind^G_P(\cdot)$ denote the unnormalized parabolic induction from the Levi quotient of $P \subset G$. Let $\delta_P$ denote the modulus character of $P(F)$ taking values in $q^\Z$. Upon choosing $q^{1/2} \in \overline{\Q_\ell}$, we can also form the \emph{normalized parabolic induction} $I^G_P(\cdot) := \Ind^G_P\left( \cdot \otimes \delta_P^{1/2} \right)$.

We need the notion \cite[VI.7.1]{Ren10} of the \emph{cuspidal support} $(M,\tau)$ of an irreducible smooth representation $\pi$. Here $M \subset G$ is a Levi subgroup and $\tau$ is a supercuspidal irreducible representation of $M(F)$, such that $\pi$ is a subquotient of $I^G_P(\tau)$ for any parabolic subgroup $P \subset G$ with Levi component $M$. The cuspidal support is unique up to $G(F)$-conjugacy. It is known that one can choose $P$ with Levi component $M$ such that $\pi \hookrightarrow I^G_P(\tau)$. See \cite[VI.5.4]{Ren10}.

We collect below a few properties of an irreducible smooth representation $\pi$ of $G(F)$.

\begin{enumerate}
	\item Suppose that $\pi$ is supercuspidal. There exists a finite extension $E$ of $\Q_\ell$ such that $\pi$ is defined over $E$. Indeed, since the central character $\omega_\pi$ can be defined over some finite extension of $\Q_\ell$, so is $\pi \hookrightarrow C_c(G(F), \omega_\pi)$.

	From this and the discussion on cuspidal supports, it follows that every $\pi$ can be defined over some finite extension $E$ of $\Q_\ell$.

	\item We say $\pi$ is \emph{integral} if it admits an $\mathfrak{o}_E$-model of finite type, where $E$ is a finite extension of $\Q_\ell$. See \cite[\S 1.4]{Vig01} for details. Then an irreducible supercuspidal $\pi$ is integral if and only if $\omega_\pi$ has $\ell$-adically bounded image in $\overline{\Q_\ell}^\times$.
	
	Again, this is a consequence of $\pi \hookrightarrow C_c(G(F), \omega_\pi)$. It also implies the notion of integrality stated in the beginning of \cite{GL17}.

	\item Let $V$ be the underlying vector space of $\pi$. The \emph{contragredient representation} $\check{\pi}$ of a smooth representation  $\pi$ is realized on the space $V^\vee$ of the smooth vectors in $\Hom_{\overline{\Q_\ell}}(V, \overline{\Q_\ell})$. It satisfies $\lrangle{\check{\rho}(g)\check{v}, v} = \lrangle{\check{v}, \rho(g^{-1})v}$. If $\pi$ is defined over $E$, so is $\check{\pi}$. Taking contragredient preserves irreducibility and supercuspidality. It is clear that $(\pi \otimes \chi)^\vee = \check{\pi} \otimes \chi^{-1}$ for any smooth character $\chi: G(F) \to \overline{\Q_\ell}^\times$.
	
	\item Moreover, $(\pi)^{\vee\vee} \simeq \pi$ for all smooth irreducible $\pi$; see \cite[III.1.7]{Ren10}. Also, $\omega_{\check{\pi}} = \omega_\pi^{-1}$
\end{enumerate}

\begin{proposition}\label{prop:sc-support}
	If $\pi$ is an irreducible smooth representation of $G(F)$ with cuspidal support $(M, \tau)$, then $\check{\pi}$ has cuspidal support $(M, \check{\tau})$.
\end{proposition}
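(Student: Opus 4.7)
The plan is to deduce the statement from the standard duality formula for normalized parabolic induction,
\[ I^G_P(\tau)^\vee \simeq I^G_P(\check{\tau}), \]
valid for any parabolic subgroup $P = MU \subset G$ and any admissible finite-length smooth representation $\tau$ of $M(F)$. By the discussion of cuspidal support preceding the proposition, one may choose a parabolic $P \subset G$ with Levi component $M$ such that $\pi \hookrightarrow I^G_P(\tau)$. Since both sides of the displayed isomorphism are admissible of finite length, taking the contragredient is exact and reverses the direction of the embedding, producing a surjection
\[ I^G_P(\check{\tau}) \simeq I^G_P(\tau)^\vee \twoheadrightarrow \check{\pi}. \]
In particular, $\check{\pi}$ is a subquotient of $I^G_P(\check{\tau})$, and $\check{\tau}$ is again irreducible supercuspidal on $M(F)$ by the properties listed just before the proposition. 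By definition this says $(M, \check{\tau})$ is a cuspidal support of $\check{\pi}$, and the uniqueness of cuspidal support up to $G(F)$-conjugacy concludes the argument.

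The only external ingredient is the duality formula itself, which is standard and can be extracted from the unnormalized version
\[ (\Ind^G_P V)^\vee \simeq \Ind^G_P\bigl(\check{V} \otimes \delta_P\bigr) \]
by absorbing the two half-powers $\delta_P^{\pm 1/2}$ coming from the normalization $I^G_P = \Ind^G_P(\cdot \otimes \delta_P^{1/2})$; this is recorded, for instance, in \cite{Ren10}. There is accordingly no genuine obstacle: the proposition is a formal consequence of functoriality of parabolic induction with respect to the contragredient functor, combined with the fact (also recalled before the statement) that one can always realize $\pi$ as a subrepresentation, rather than merely a subquotient, of $I^G_P(\tau)$ for a suitable choice of $P$.
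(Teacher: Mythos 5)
Your argument is correct and coincides with the paper's own proof: both pick a parabolic $P$ with $\pi \hookrightarrow I^G_P(\tau)$, invoke the canonical isomorphism $I^G_P(\tau)^\vee \simeq I^G_P(\check{\tau})$, dualize to obtain a surjection onto $\check{\pi}$, and conclude by uniqueness of cuspidal support. The only difference is that the paper cites Bushnell–Henniart for the duality of parabolic induction while you derive it from the unnormalized version; that step is correct as written.
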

\begin{proof}
	Choose a parabolic subgroup $P \subset G$ with $M$ as Levi component such that $\pi \hookrightarrow I^G_P(\tau)$. Once the Haar measures are chosen, we have $I^G_P(\tau)^\vee \simeq I^G_P(\check{\tau})$ canonically; see \cite[\S 3.5]{BH06}. Dualizing, we deduce $I^G_P(\check{\tau}) \twoheadrightarrow \check{\pi}$. Thus $\check{\pi}$ is a subquotient of $I^G_P(\check{\tau})$.
\end{proof}

\subsection{\texorpdfstring{$L$}{L}-parameters}\label{sec:L-parameters}
Let $F$ be a local or global field of characteristic $p > 0$. For a connected reductive $F$-group $G$, we denote by $\tilde{F}|F$ the splitting field of $G$, which is a finite Galois extension inside a chosen separable closure $\bar{F}$.

Denote by $\Weil{F}$ the absolute Weil group of $F$. It comes with canonical continuous homomorphisms
\begin{inparaenum}[(i)]
	\item $\Weil{F} \to \Gal(\bar{F}|F)$, and
	\item $\Weil{F_v} \to \Weil{F}$ if $F$ is global and $v$ is a place of $F$. For (ii) we choose an embedding $\overline{F} \hookrightarrow \overline{F_v}$ of separable closures.
\end{inparaenum}

\begin{definition}
	The \emph{Langlands dual group} $\hat{G}$ of $G$ is a pinned connected reductive $\Q_\ell$-group (in fact, definable over $\Z$), on which $\Gal(\tilde{F}|F)$ operates by pinned automorphisms. Throughout this article, we use the \emph{finite Galois forms} of the $L$-group of $G$, namely
	\[ \Lgrp{G} := \hat{G} \rtimes \Gal(\tilde{F}|F) \]
	viewed as an affine algebraic group.
\end{definition}

If $M \hookrightarrow G$ is a Levi subgroup, we obtain a corresponding embedding $\Lgrp{M} \to \Lgrp{G}$ of standard Levi subgroup.

\begin{definition}
	An $L$-parameter for $G$ is a homomorphism $\sigma: \Weil{F} \to \Lgrp{G}(\overline{\Q_\ell})$ such that
	\begin{itemize}
		\item the diagram
		$\begin{tikzcd}[row sep=small, column sep=tiny]
			\Weil{F} \arrow[rr, "\sigma"] \arrow[rd] & & \Lgrp{G} \arrow[ld] \\
			& \Gal(\tilde{F}|F) &
		\end{tikzcd}$ commutes;
		\item $\sigma$ is continuous with respect to the $\ell$-adic topology on $\Lgrp{G}(\overline{\Q_\ell})$;
		\item $\sigma$ is \emph{relevant} in the sense of \cite[\S 8.2]{Bo79}, which matters only when $G$ is not quasisplit;
		\item (the local case) $\sigma$ is Frobenius semisimple: $\rho(\sigma(\Frob))$ is semisimple for every algebraic representation $\rho: \Lgrp{G}(\overline{\Q_\ell}) \to \GL(N, \overline{\Q_\ell})$, where $\Frob$ stands for any Frobenius element in $\Weil{F}$ (see \cite[32.7 Proposition]{BH06} for more discussions on Frobenius-semisimplicity);
		\item (the global case) $\sigma$ is semisimple in the sense of \cite{Se05}, to be described below. We do not require Frobenius-semisimplicity here because for $\ell$-adic representations of geometric origin, that property is a long-standing conjecture in étale cohomology. 
	\end{itemize}
	The set of $\hat{G}(\overline{\Q_\ell})$-conjugacy classes of $L$-parameters is denoted as $\Phi(G)$. By \cite[\S 3.4]{Bo79}, there is a natural map $\Phi(M) \to \Phi(G)$ for any Levi subgroup $M$.
\end{definition}

\begin{remark}
	Since $\Lgrp{G}(\overline{\Q_\ell})$ carries the $\ell$-adic topology and $\sigma$ is required to be continuous, when $F$ is local we get rid of the Weil--Deligne group in the usual formulation in terms of $\Lgrp{G}(\CC)$. Besides, we do not consider Arthur parameters in this article.
\end{remark}

As recalled earlier, the structure of Weil groups allows us to
\begin{itemize}
	\item localize a global $L$-parameter at a place $v$;
	\item talk about $L$-parameters of the form $\Gal(\bar{F}|F) \to \Lgrp{G}(\overline{\Q_\ell})$ and their localizations when $F$ is global.
\end{itemize}

Next, we recall the \emph{semisimplicity} of $L$-parameters following \cite{Laf18, Se05}: a continuous homomorphism $\sigma: \Weil{F} \to \Lgrp{G}(\overline{\Q_\ell})$ is called semisimple if the Zariski closure of $\Image(\sigma)$ is reductive in $\Lgrp{G}(\overline{\Q_\ell})$, in the sense that its identity connected component is reductive. When $G$ is split, this is exactly the definition of complete reducibility in \cite[3.2.1]{Se05}, say by applying \cite[Proposition 4.2]{Se05}.

\begin{lemma}\label{prop:semisimple-equiv}
	Assume $F$ is local. The following are equivalent for any $L$-parameter $\sigma$ for $G$:
	\begin{enumerate}[(i)]
		\item $\sigma$ is semisimple;
		\item the Weil--Deligne parameter associated to $\sigma$ has trivial nilpotent part.
	\end{enumerate}
\end{lemma}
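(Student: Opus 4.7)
The plan is to invoke Grothendieck's $\ell$-adic monodromy theorem to extract the Weil--Deligne parameter of $\sigma$, and then relate the nilpotent part directly to the existence of a connected unipotent normal subgroup of the Zariski closure $H := \overline{\Image(\sigma)}^{\mathrm{Zar}} \subseteq \Lgrp{G}_{\overline{\Q_\ell}}$.

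Fix a faithful algebraic representation $\rho \colon \Lgrp{G} \hookrightarrow \GL(r)$. Applying Grothendieck's theorem to $\rho \circ \sigma$ (using $\ell \neq p$) produces an open subgroup $J \subseteq I_F$ and a unique nilpotent element $\nu \in \hat{\mathfrak{g}}(\overline{\Q_\ell})$ with
\[
	\sigma(g) \;=\; \exp\bigl( t_\ell(g)\,\nu \bigr), \qquad g \in J,
\]
where $t_\ell \colon I_F \twoheadrightarrow \Z_\ell$ is the $\ell$-adic tame character. This $\nu$ is by definition the nilpotent part of the Weil--Deligne parameter attached to $\sigma$; the accompanying Weil part $\phi$ has finite image on $I_F$, and the Frobenius-semisimplicity clause built into the definition of an $L$-parameter forces $\phi(\Frob)$ to be semisimple.

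For $(i) \Rightarrow (ii)$, I would argue contrapositively. Suppose $\nu \neq 0$. Since $t_\ell(J) \subseteq \Z_\ell$ is open and hence Zariski-dense in $\Ga(\overline{\Q_\ell})$, the Zariski closure of $\sigma(J)$ is the nontrivial connected one-parameter unipotent subgroup $U := \{\exp(t\nu) : t \in \overline{\Q_\ell}\}$. After replacing $J$ by a smaller open subgroup we may assume $J$ is normal in $I_F$; choosing then a sufficiently high power of $\Frob$ that centralizes the finite quotient $I_F/J$ yields an open finite-index subgroup $\Weil{F}' \subseteq \Weil{F}$ which contains and normalizes $J$ and satisfies $\sigma(\Weil{F}') \subseteq H^0$. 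Thus $\overline{\sigma(\Weil{F}')}^{\mathrm{Zar}} = H^0$, and this group normalizes $U$ because $\Weil{F}'$ normalizes $J$. Hence $U \subseteq H^0$ is a nontrivial connected unipotent normal subgroup, contradicting reductivity of $H^0$.

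For $(ii) \Rightarrow (i)$, assume $\nu = 0$, so $\sigma|_J = 1$ and $\sigma(I_F)$ is a finite group of semisimple elements (finite order in characteristic zero). Pick an open finite-index $\Weil{F}' \subseteq \Weil{F}$ with $\sigma(\Weil{F}') \subseteq H^0$, shrinking further so that a chosen power $\sigma(\Frob^k) \in \Weil{F}'$ commutes elementwise with $\sigma(I_F) \cap \sigma(\Weil{F}')$. Then $\sigma(\Weil{F}')$ is an abelian group of pairwise commuting semisimple elements, so $H^0 = \overline{\sigma(\Weil{F}')}^{\mathrm{Zar}}$ is diagonalizable and in particular reductive, which establishes $(i)$. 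The principal obstacle is the delicate bookkeeping involved in passing to the finite-index subgroup $\Weil{F}'$ with the right normalization and centralization properties; this rests on the structure of $\Weil{F}$ as an extension of $\Z$ by the profinite group $I_F$ and the finiteness of the relevant automorphism groups, after which the argument reduces to the standard dichotomy between semisimple and unipotent elements in reductive groups.
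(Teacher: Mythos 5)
Your direction (i)\,$\Rightarrow$\,(ii) is essentially correct and rests on the same central idea as the paper's proof: when $\nu \neq 0$, the Zariski closure of $\sigma(J)$ is a nontrivial connected unipotent subgroup $U$, and once you know $U$ is normal in $H^0$ you contradict reductivity. The paper obtains normality more directly from the Weil--Deligne relation, which shows $\Image(\sigma)$-conjugation preserves the line $\overline{\Q_\ell}\,\mathfrak{n}$; your detour through shrinking $J$ to be normal and passing to an auxiliary open $\Weil{F}'$ is more laborious but works.

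There is, however, a genuine gap in your direction (ii)\,$\Rightarrow$\,(i). You claim that after arranging for $\sigma(\Frob^k)$ to commute with $\sigma(I_F)\cap\sigma(\Weil{F}')$, the group $\sigma(\Weil{F}')$ is abelian. But $\sigma(\Weil{F}')$ is generated by $\sigma(\Frob^k)$ together with $\sigma(I_F\cap\Weil{F}')$, and nothing in your argument forces the latter to be abelian: $\sigma(I_F)$ is finite but may perfectly well be nonabelian (already for $\GL(2)$, via dihedral or wildly ramified parameters), and intersecting with a finite-index open subgroup does not repair this. So the assertion that $\sigma(\Weil{F}')$ is \emph{an abelian group of pairwise commuting semisimple elements}, on which your diagonalizability conclusion rests, is unsubstantiated. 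The gap is repairable: $J_0 := \Ker(\sigma|_{I_F})$ is open and normal in the whole $\Weil{F}$, so taking $\Weil{F}' := J_0\cdot\langle w\rangle$ for a Frobenius lift $w$ with $\sigma(w)\in H^0$ makes $\sigma(\Weil{F}')$ cyclic, generated by a single semisimple element, and the diagonalizability argument then goes through; but as written your proof does not do this. The paper avoids the issue altogether by quoting that a Frobenius-semisimple Weil--Deligne representation with trivial monodromy is already semisimple as a smooth representation of $\Weil{F}$ (Bushnell--Henniart, 32.7 Theorem), and then invoking Serre's theory of complete reducibility to conclude that the Zariski closure of $\Image(\sigma)$ is reductive.
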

\begin{proof}
	By composing $\sigma$ with any faithful algebraic representation $\rho: \Lgrp{G}(\overline{\Q_\ell}) \hookrightarrow \GL(N, \overline{\Q_\ell})$, we may assume that $\sigma$ is an $\ell$-adic representation $\Weil{F} \to \GL(N, \overline{\Q_\ell})$. To $\sigma$ is associated the Weil--Deligne representation $\mathrm{WD}(\sigma)$: it comes with a nilpotent operator $\mathfrak{n}$. For details, see \cite[32.5]{BH06}.

	(i) $\implies$ (ii): The line $\overline{\Q_\ell} \mathfrak{n}$ is preserved by $\Image(\sigma)$-conjugation. Since $\exp(t\mathfrak{n}) \in \Image(\sigma)$ for $t \in \Z_\ell$ with $|t| \ll 1$, the semisimplicity of $\sigma$ forces $\mathfrak{n} = 0$.

	(ii) $\implies$ (i). As $\mathfrak{n} = 0$, the smooth representation underlying $\mathrm{WD}(\sigma)$ is just $\sigma$, hence $\sigma$ is semisimple as a smooth representation of $\Weil{F}$ by \cite[32.7 Theorem]{BH06}. The reductivity (or complete reducibility) of the Zariski closure of $\Image(\sigma)$ then follows from the theory in \cite{Se05}.
\end{proof}

Finally, we define parabolic subgroups of $\Lgrp{G}$ as in \cite[3.2]{Bo79}. They are subgroups of the form $N_{\Lgrp{G}}(\hat{P})$ where $\hat{P} \subset \hat{G}$ is a parabolic subgroups, and whose projection to $\Gal(\tilde{F}|F)$ has full image. Define the unipotent radical of such a parabolic subgroup to be that of $\hat{P}$. We still have the notion of Levi decomposition in this setting; see \cite[3.4]{Bo79}.

Following \cite[\S 13]{Laf18}, the \emph{semi-simplification} $\sigma^{\mathrm{ss}}$ of an $L$-parameter $\sigma$ is defined as follows
\begin{compactitem}
	\item first, take the smallest parabolic subgroup $\Lgrp{P} \subset \Lgrp{G}$ containing $\Image(\sigma)$;
	\item project to the Levi quotient;
	\item then embed back into $\Lgrp{G}$ using some Levi decomposition.
\end{compactitem}
The resulting parameter is well-defined up to $\hat{G}(\overline{\Q_\ell})$-conjugacy.

By definition, an $L$-homomorphism $\Lgrp{H} \to \Lgrp{G}$ between $L$-groups is an algebraic homomorphism respecting the projections to $\Gal(\tilde{F}|F)$.
\begin{lemma}\label{prop:commutation-ss}
	Up to $\hat{G}(\overline{\Q_\ell})$-conjugacy, semi-simplification commutes with $L$-automorphisms of $\Lgrp{G}$.
\end{lemma}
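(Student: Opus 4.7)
The plan is to trace through the three-step recipe defining $\sigma^{\mathrm{ss}}$ and verify that each step is equivariant under an $L$-automorphism $\alpha$ of $\Lgrp{G}$. First I would check that $\alpha$ preserves the structural data entering the construction: it sends parabolic subgroups of $\Lgrp{G}$, in the sense recalled just above the lemma, to parabolic subgroups, preserves unipotent radicals, and carries Levi decompositions to Levi decompositions. This is immediate from the definitions, since a parabolic of $\Lgrp{G}$ is the normalizer of some parabolic of $\hat{G}$ whose image in $\Gal(\tilde F|F)$ is surjective, and $\alpha$ respects both the algebraic structure of $\hat{G}$ and the projection $\Lgrp{G} \twoheadrightarrow \Gal(\tilde F|F)$.

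Next, if $\Lgrp{P}$ is a minimal parabolic of $\Lgrp{G}$ containing $\Image(\sigma)$, with a chosen Levi decomposition $\Lgrp{P} = \Lgrp{M} \ltimes \Lgrp{U}$, then $\alpha(\Lgrp{P}) = \alpha(\Lgrp{M}) \ltimes \alpha(\Lgrp{U})$ is a parabolic containing $\Image(\alpha\circ\sigma) = \alpha(\Image\sigma)$, and minimality transfers via $\alpha^{-1}$: any strictly smaller parabolic $\Lgrp{Q} \subsetneq \alpha(\Lgrp{P})$ containing $\Image(\alpha\circ\sigma)$ pulls back to a strictly smaller parabolic inside $\Lgrp{P}$ containing $\Image(\sigma)$, contradicting the choice of $\Lgrp{P}$. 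Consequently the composition $\Weil{F} \xrightarrow{\alpha\circ\sigma} \alpha(\Lgrp{P}) \twoheadrightarrow \alpha(\Lgrp{M}) \hookrightarrow \Lgrp{G}$, which represents $(\alpha\circ\sigma)^{\mathrm{ss}}$, equals $\alpha \circ \sigma^{\mathrm{ss}}$ literally.

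Finally I would invoke the $\hat{G}(\overline{\Q_\ell})$-conjugacy ambiguity inherent in the recipe for $\sigma^{\mathrm{ss}}$: different choices of Levi decomposition inside a fixed parabolic are absorbed by conjugation under the unipotent radical, while the choice of the minimal parabolic containing $\Image(\sigma)$ is precisely the source of the remaining $\hat{G}(\overline{\Q_\ell})$-conjugacy freedom, which is where semisimplicity in the sense of \cite{Se05} enters. The hardest (in fact essentially the only) point to settle is the well-definedness of this minimal parabolic up to $\hat{G}(\overline{\Q_\ell})$-conjugacy for the original $\sigma$; once that is granted, applying $\alpha$ carries the ambiguity into the corresponding $\hat{G}(\overline{\Q_\ell})$-conjugacy on the target side, so the identity $(\alpha\circ\sigma)^{\mathrm{ss}} = \alpha \circ \sigma^{\mathrm{ss}}$ holds up to $\hat{G}(\overline{\Q_\ell})$-conjugacy as claimed.
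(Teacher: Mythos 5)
Your argument is correct and follows the same route as the paper's (one-sentence) proof: an $L$-automorphism permutes parabolic subgroups of $\Lgrp{G}$ together with their Levi decompositions, hence commutes with the three-step recipe up to the conjugacy ambiguity already built into the definition of $\sigma^{\mathrm{ss}}$. The only slightly off note is the parenthetical remark that semisimplicity in the sense of \cite{Se05} is what makes the minimal parabolic well-defined up to conjugacy — semi-simplification is applied to arbitrary $L$-parameters, and the well-definedness of $\sigma^{\mathrm{ss}}$ up to $\hat{G}(\overline{\Q_\ell})$-conjugacy is simply taken as part of its definition — but this does not affect the validity of the argument.
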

\begin{proof}
	Indeed, an $L$-automorphism permutes the parabolic subgroups of $\Lgrp{G}$ together with their Levi decompositions.
\end{proof}

\section{Statement of a variant of the conjecture}\label{sec:statement}
\subsection{Chevalley involutions}\label{sec:Chevalley}
To begin with, we consider a split connected reductive group $H$ over a field, equipped with a pinning $\mathcal{P} = (B,T, (X_\alpha)_{\alpha \in \Delta_0})$, where
\begin{compactitem}
	\item $(B,T)$ is a Borel pair of $H$, and
	\item $X_\alpha$ is a nonzero vector in the root subspace $\mathfrak{h}_\alpha$, where $\alpha$ ranges over the set $\Delta_0$ of $B$-simple roots.
\end{compactitem}

\begin{definition}
	The \emph{Chevalley involution} $\theta = \theta_{\mathcal{P}}$ is the unique pinned automorphism of $H$ acting as $t \mapsto w_0(t^{-1})$ on $T$, where $w_0$ stands for the longest element in the Weyl group associated to $T$.
\end{definition}
This is the definition in \cite[\S 4]{Pra18}, and it is clear that $\theta^2 = \identity_H$.

The Chevalley involution will be considered in the following settings. Let $F$ be a field with separable closure $\bar{F}$.
\begin{enumerate}
	\item Let $H = \hat{G}$ be the dual group of $G$, which is connected reductive over $F$. The dual group is endowed with a pinning and we obtain $\theta: \hat{G} \to \hat{G}$. Since $\Gal(\tilde{F}|F)$ operates by pinned automorphisms on $\hat{G}$, the Chevalley involution extends to
	\[ \Lgrp{\theta}: \Lgrp{G} \to \Lgrp{G}, \quad g \rtimes \sigma \mapsto \theta(g) \rtimes \sigma, \]
	which is still an involution.
	\item Let $G$ be a quasisplit connected reductive group over $F$. Then $G$ admits an $F$-pinning $\mathcal{P}$, i.e.\ a Galois-invariant pinning of $H := G_{\overline{F}}$. Therefore the Chevalley involution $\theta = \theta_{\mathcal{P}}$ for $G_{\overline{F}}$ descends to $G$.
\end{enumerate}

Furthermore, observe that if $H = \prod_{i=1}^r H_i$ and $\mathcal{P}$ decomposes into $(\mathcal{P}_1, \ldots, \mathcal{P}_r)$ accordingly, the corresponding Chevalley involution $\theta_{\mathcal{P}}$ equals $\prod_{i=1}^r \theta_{\mathcal{P}_i}$.

\subsection{The local statement}\label{sec:local-statement}
Let $F$ be a local field of characteristic $p > 0$. Let $G$ be a connected reductive $F$-group. The set of isomorphism classes of irreducible smooth representations over $\overline{\Q_\ell}$ of $G(F)$ will be denoted by $\Pi(G)$. The local statement to follow presumes a given \emph{Langlands parametrization} of representations, namely an arrow
\begin{align*}
	\Pi(G) & \to \Phi(G) \\
	\pi & \mapsto \phi.
\end{align*}
This is the ``automorphic to Galois'' direction of the local Langlands correspondence for $G$. We say that $\phi$ is the parameter of $\pi$, and denote by $\Pi_\phi \subset \Pi(G)$ the fiber over $\phi$, called the \emph{packet} associated to $\phi$.

For the local statement, we employ the Langlands parameterization furnished by Genestier--Lafforgue \cite{GL17}. It is actually an arrow
\[ \Pi(G) \to \left\{ \text{semisimple $L$-parameters} \right\} \big/ \hat{G}(\overline{\Q_\ell})\text{-conj.} \subset \Phi(G). \]

\begin{remark}\label{rem:GL-semisimplified}
	The Genestier--Lafforgue parameters are expected to be the semi-simplifications of authentic (yet hypothetical) Langlands parameters. As a consequence, the packets $\Pi_\phi$ for general Genestier--Lafforgue parameters are expected to be a disjoint union of authentic $L$-packets, unless when $\phi$ is an elliptic parameter (see Lemma \ref{prop:semisimple-equiv}), i.e.\ $\Image(\phi)$ is $\Lgrp{G}$-ir in the sense of \cite[3.2.1]{Se05}.
\end{remark}

Further descriptions and properties of the Genestier--Lafforgue parameterization will be reviewed in due course. Let us move directly to the main local statement.

\begin{theorem}\label{prop:local-contragredient}
	Let $\phi \in \Phi(G)$ be a semisimple $L$-parameter. In terms of the Langlands parameterization of Genestier--Lafforgue, we have
	\[ \left\{ \check{\pi} : \pi \in \Pi_\phi \right\} = \Pi_{\Lgrp{\theta} \circ \phi}, \]
	where $\Lgrp{\theta}: \Lgrp{G} \to \Lgrp{G}$ is the Chevalley involution in \S\ref{sec:Chevalley}.
\end{theorem}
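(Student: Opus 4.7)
The plan is to follow the local--global strategy outlined in the introduction: reduce Theorem~\ref{prop:local-contragredient} to the global Theorem~\ref{prop:global-contragredient} by globalizing $\pi$, then invoke the local--global compatibility of the Genestier--Lafforgue parameterization from \cite{GL17}.

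I would first perform two local reductions. If $\pi$ has cuspidal support $(M, \tau)$, Proposition~\ref{prop:sc-support} gives that $\check{\pi}$ has cuspidal support $(M, \check{\tau})$. Combined with the compatibility of the Genestier--Lafforgue parameterization with parabolic induction \cite[Théorème 0.1]{GL17}, together with the fact that the Chevalley involution $\Lgrp{\theta}_G$ restricted to a standard Levi $\Lgrp{M}$ (after a Weyl correction sending $\theta(\hat{M})$ back to $\hat{M}$) agrees up to $\hat{M}$-conjugacy with the Chevalley involution $\Lgrp{\theta}_M$, the statement for $G$ reduces to the statement for $M$ applied to $\tau$; hence one may assume $\pi$ is supercuspidal. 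Next, by twisting $\pi$ by a smooth character of $G(F)/G(F)^1$ (see \eqref{eqn:G(F)1})---whose effect on both the representation and the parameter is a controlled unramified twist that commutes with $\Lgrp{\theta}$ and with contragredient in a compatible way---I may further assume $\pi$ is integral and that $\omega_\pi$ has finite order when restricted to $A_G(F)$.

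The second step is globalization. Choose a global field $\mathring{F}$ of characteristic $p$ with a place $v_0$ such that $\mathring{F}_{v_0} \simeq F$, a Bruhat--Tits model of a group $\mathring{G}$ whose localization at $v_0$ recovers $G$, and a co-compact lattice $\Xi \subset A_G(\mathring{F}) \backslash A_G(\A)$ compatible with the finite-order condition on $\omega_\pi|_{A_G(F)}$. Using the integrality and central character hypotheses, I would globalize $\pi$ to a cuspidal automorphic representation $\mathring{\pi}$ of $\mathring{G}(\A)$ with $\mathring{\pi}_{v_0} \simeq \pi$ and $\mathring{\pi}^{K_N} \neq 0$ for a suitable level $N \subset X$ avoiding $v_0$. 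Then $\mathring{\check{\pi}}$ is again cuspidal with local component $\check{\pi}$ at $v_0$. By Proposition~\ref{prop:K_N-invariants}, both $\mathring{\pi}^{K_N}$ and $\mathring{\check{\pi}}^{K_N}$ are simple Hecke modules and embed into $\Ccusp(\Bun_{G,N}(\F_q)/\Xi; \overline{\Q_\ell})$.

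For the third step, decompose $\Ccusp(\Bun_{G,N}(\F_q)/\Xi; \overline{\Q_\ell}) = \bigoplus_\sigma \mathfrak{H}_\sigma$ according to characters $\nu_\sigma$ of the excursion algebra $\mathcal{B}$. After possibly modifying the automorphic realizations of $\mathring{\pi}$ and $\mathring{\check{\pi}}$ inside their respective isotypic components---which is permissible because the decomposition is Hecke-equivariant---their $K_N$-invariants land in single eigenspaces $\mathfrak{H}_\sigma$ and $\mathfrak{H}_{\sigma'}$. The integration pairing $\lrangle{\cdot,\cdot}$ of \S\ref{sec:integration-pairing} realizes the canonical duality between $\mathring{\pi}$ and $\mathring{\check{\pi}}$, hence is nontrivial on $\mathfrak{H}_\sigma \otimes \mathfrak{H}_{\sigma'}$. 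Theorem~\ref{prop:global-contragredient} then forces $\sigma' = \Lgrp{\theta} \circ \sigma$ up to $\hat{G}(\overline{\Q_\ell})$-conjugacy. Localizing at $v_0$ via \cite{GL17} converts this equality of global parameters into the desired equality of Genestier--Lafforgue parameters $\Lgrp{\theta} \circ \phi$ for $\check{\pi}$.

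The principal obstacle is the globalization together with the eigenspace matching. One needs an automorphic realization of $\mathring{\pi}$ whose $K_N$-invariants lie in a single $\mathfrak{H}_\sigma$, and symmetrically for the contragredient, so that the integration pairing identifies the correct pair $(\sigma, \sigma')$. This is delicate because isomorphic copies of $\mathring{\pi}$ inside the cuspidal spectrum may \emph{a priori} be distributed across multiple eigenspaces; the resolution exploits the Hecke-equivariance of the spectral decomposition and the freedom to choose representatives within isotypic components, together with a careful choice of auxiliary local data to secure existence of such $\mathring{\pi}$ in the first place. The reduction to supercuspidal case also requires the compatibility of $\Lgrp{\theta}$ with Levi embeddings, which, while standard for split dual groups, needs verification in the Galois-equivariant setting used here.
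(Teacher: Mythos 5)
Your overall strategy matches the paper exactly: reduce to supercuspidal via cuspidal supports (Proposition~\ref{prop:sc-support} plus \cite[Théorème 0.1]{GL17} plus compatibility of Chevalley involutions with Levi embeddings, for which the paper cites \cite[\S 5, Lemma 4]{Pra18}); twist to arrange finite-order central character on $A_G(F)$; globalize; apply Theorem~\ref{prop:global-contragredient}; and localize via \cite{GL17}. But there is a genuine gap in your third step, precisely at the point you yourself flag as the ``principal obstacle.''

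The gap: you globalize $\pi$ to $\mathring{\pi}$, independently pick a realization of $\check{\mathring{\pi}}$ in the cuspidal spectrum, move both realizations into single eigenspaces $\mathfrak{H}_\sigma$, $\mathfrak{H}_{\sigma'}$, and then assert ``the integration pairing realizes the canonical duality between $\mathring{\pi}$ and $\mathring{\check{\pi}}$, hence is nontrivial on $\mathfrak{H}_\sigma \otimes \mathfrak{H}_{\sigma'}$.'' This does not follow. If $\mathring{\pi}$ occurs in $\Ccusp$ with multiplicity $> 1$, two independently chosen realizations of $\mathring{\pi}$ and $\check{\mathring{\pi}}$ can easily be orthogonal under the integration pairing; ``the freedom to choose representatives within isotypic components'' cuts both ways and gives no control. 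The correct order of quantifiers is the other way around: fix $\mathring{\pi}^{K_N} = \mathcal{L} \subset \mathfrak{H}_\sigma$ first; since the integration pairing on $H_{\{0\},\mathbf{1}}$ is non-degenerate and the $\mathfrak{H}_{\sigma'}$ give a direct-sum decomposition into Hecke submodules, there must exist a simple Hecke submodule $\mathcal{L}' \subset \mathfrak{H}_{\sigma'}$ (for some $\sigma'$) pairing non-trivially with $\mathcal{L}$; then the $G(\A)$-invariance of the integration pairing (Definition~\ref{def:integration-pairing}) together with Proposition~\ref{prop:K_N-invariants} forces the corresponding automorphic representation $\mathring{\pi}'$ to satisfy $\mathring{\pi}' \simeq \mathring{\pi}^\vee$. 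You never actually choose a realization of the contragredient; non-degeneracy hands it to you in the right eigenspace. Only then does Theorem~\ref{prop:global-contragredient} apply and give $\sigma' = \Lgrp{\theta}\circ\sigma$, after which localization at $v_0$ and Lemma~\ref{prop:commutation-ss} (to commute semi-simplification with $\Lgrp{\theta}$) finish the proof.

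A secondary remark: your character-twist reduction is described too loosely. The paper handles it either via Lemma~\ref{prop:character-parameter} (associating a class in $\mathrm{H}^1(\Weil{F}, Z_{\hat G})$ to a character of $G(F)/G(F)^1$) and the observation that $\Lgrp{\theta}$ acts by inversion on $Z_{\hat{G}}$, or, as in Remark~\ref{rem:new-reduction}, via ``trivial functoriality'' \cite[Théorème 8.1]{GL17}. ``A controlled unramified twist that commutes with $\Lgrp{\theta}$ and with contragredient in a compatible way'' is true, but you need one of these precise mechanisms to know that twisting by $\eta$ sends the Genestier--Lafforgue parameter $\phi$ to $\phi \cdot a$ with $a$ the associated cocycle, and that $\Lgrp{\theta}\circ(\phi\cdot a) = (\Lgrp{\theta}\circ\phi)\cdot a^{-1}$.

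Finally, for the existence of $\mathring{\pi}$ with prescribed $\mathring{\pi}_{v_0}\simeq\pi$, $\mathring{\pi}^{K_N}\neq 0$ and lying in a single $\mathfrak{H}_\sigma$, the paper constructs a Poincaré series $P_f$ from a matrix coefficient of $\pi$ at $v_0$ and suitably supported test functions elsewhere, then picks any simple constituent $\mathcal{L} \subset \mathfrak{H}_\sigma$ on which $P_f$ has nonzero component (Remark~\ref{rem:realize-in-H} plus Proposition~\ref{prop:K_N-invariants}). Your ``careful choice of auxiliary local data'' gestures at this but would need the matrix-coefficient / small-support argument to be written out; integrality and the finite order of $\omega_\pi|_{A_G(F)}$ are precisely what makes the Poincaré series converge and lie in the $\overline{\Q_\ell}$-cuspidal space modulo $\Xi$.
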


If the Genestier--Lafforgue parameterization is replaced by an authentic Langlands parameterization, the statement above becomes \cite[Conjecture 1.1]{AV16} by Adams--Vogan; it is also a part of \cite[\S 4, Conjecture 2]{Pra18} by D.\ Prasad, but Prasad's conjecture also predicates on the internal structure of $L$-packets. The conjecture of Adams--Vogan--Prasad applies to any local field $F$; known cases in this generality include
\begin{itemize}
	\item the case $F = \R$ in \cite[Theorem 7.1 (a)]{AV16}, with admissible representations of $G(\R)$ over $\CC$;
	\item the tempered $L$-packets for symplectic groups $\Sp(2n)$ and quasisplit $\SO$ groups over non-Archimedean local fields $F$ of characteristic zero in terms of Arthur's endoscopic classification, see \cite[Corollary 5.10]{Kal13};
	\item the depth-zero and epipelagic $L$-packets for many $p$-adic groups \cite[\S 6]{Kal13}.
\end{itemize}
Each case above requires a different construction of $L$-packets, applicable to different groups or parameters, whereas the Theorem \ref{prop:local-contragredient} furnishes a uniform statement. On the other hand, Theorem \ref{prop:local-contragredient} is weaker since the Langlands parametrization here is coarser, in view of the Remark \ref{rem:GL-semisimplified}.

The proof of Theorem \ref{prop:local-contragredient} will occupy \S\ref{sec:local-global}.

\subsection{The global statement}
Theorem \ref{prop:local-contragredient} will be connected to the global result below.

Let $\mathring{F} = \F_q(X)$ and fix the level $N \subset X$ as in \S\ref{sec:cusp-forms}. Let $G$, $K_N$ and $\Xi$ be as in \S\ref{sec:cusp-forms}, so that $\Bun_{G,N}$ is defined. Note that we need to choose a model of $G$ over $X$ which is a Bruhat--Tits group scheme, still denoted as $G$. Let $U \subset X$ denote the (open) locus of good reduction of $G$, and set
\begin{equation}\label{eqn:hat-N}
	\hat{N} := N \cup (X \smallsetminus U).
\end{equation}
This is a finite closed $\F_q$-subscheme of $X$, the ``unramified locus''. Let $\eta \to X$ be the generic point of $X$; fix a geometric generic point $\overline{\eta} \to \eta$ of $X$.

The main global result of V.\ Lafforgue \cite[Théorème 12.3]{Laf18} gives a canonical decomposition of $C_c(K_N \backslash G(\A) /K_N; \overline{\Q_\ell})$-modules
\begin{equation}\label{eqn:global-decomp}
	\Ccusp\left( \Bun_{G,N}(\F_q)/\Xi; \overline{\Q_\ell} \right) = \bigoplus_\sigma \mathfrak{H}_\sigma
\end{equation}
indexed by $L$-parameters $\sigma: \Gal( \overline{\mathring{F}} | \mathring{F}) \to \Lgrp{G}(\overline{\Q_\ell})$ up to $\hat{G}(\overline{\Q_\ell})$-conjugacy that
\begin{compactitem}
	\item are semisimple, and
	\item factor continuously through $\Gal( \overline{\mathring{F}} | \mathring{F}) \to \pi_1(X \smallsetminus \hat{N}, \overline{\eta})$.
\end{compactitem}

\begin{remark}\label{rem:realize-in-H}
	Since the left-hand side of \eqref{eqn:global-decomp} is a semisimple module, of finite dimension over $\overline{\Q_\ell}$, so are its submodules $\mathfrak{H}_\sigma$. To each $\sigma$ we may associate a set (with multiplicities) of simple submodules $C_\sigma$, such that
	\[ \mathfrak{H}_\sigma = \bigoplus_{\mathcal{L} \in C_\sigma} \mathcal{L} , \quad \text{hence} \quad \Ccusp\left( \Bun_{G,N}(\F_q)/\Xi; \overline{\Q_\ell} \right) = \bigoplus_\sigma \bigoplus_{\mathcal{L} \in C_\sigma} \mathcal{L} \]
	as $C_c(K_N \backslash G(\A) /K_N; \overline{\Q_\ell})$-modules.
\end{remark}

The decomposition \eqref{eqn:global-decomp} is built on two pillars: the theories of excursion operators and pseudo-characters for $\Lgrp{G}$. As in the local case, we defer the necessary details of \cite{Laf18} to \S\ref{sec:overview}.

\begin{theorem}\label{prop:global-contragredient}
	Suppose that $\mathfrak{H}_{\sigma}$, $\mathfrak{H}_{\sigma'}$ are two nonzero summands in \eqref{eqn:global-decomp} such that the restriction
	\[ \lrangle{\cdot, \cdot}_{\sigma, \sigma'} : \mathfrak{H}_\sigma \dotimes{\overline{\Q_\ell}} \mathfrak{H}_{\sigma'} \to \overline{\Q_\ell} \]
	of the integration pairing $\lrangle{\cdot, \cdot}$ of Remark \ref{rem:pairing-level} (extended to $\Bun_{G,N}(\F_q)/\Xi$) is not identically zero. Then we have
	\[ \sigma' = \Lgrp{\theta} \circ \sigma \quad \text{in}\; \Phi(G); \]
	here $\Lgrp{\theta}$ is the Chevalley involution of $\Lgrp{G}$.
\end{theorem}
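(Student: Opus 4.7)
The plan is to bootstrap everything from the transpose formula for excursion operators promised in the introduction, namely Proposition \ref{prop:transpose-1}, which says $S_{I,f,\vec{\gamma}}^* = S_{I, f^\dagger, \vec{\gamma}^{-1}}$ with $f^\dagger(\vec{g}) = f(\Lgrp{\theta}(\vec{g})^{-1})$. In particular the involution $S \mapsto S^*$ preserves the commutative algebra $\mathcal{B}$ of excursion operators acting on $H_{\{0\},\mathbf{1}}$.

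Recall that $\mathfrak{H}_\sigma$ is by construction the generalized $\nu_\sigma$-eigenspace of $\mathcal{B}$, where $\nu_\sigma: \mathcal{B} \to \overline{\Q_\ell}$ is the character attached to $\sigma$ via Lafforgue's pseudo-character formalism. The Hecke-invariance of $\lrangle{\cdot,\cdot}$, together with the transpose formula, gives $\lrangle{Sh, h'} = \lrangle{h, S^* h'}$ for all $S \in \mathcal{B}$ and $h, h' \in H_{\{0\},\mathbf{1}}$. Take $h \in \mathfrak{H}_\sigma$ and $h' \in \mathfrak{H}_{\sigma'}$. For $N$ large we have $(S - \nu_\sigma(S))^N h = 0$; applying the adjointness identity to the polynomial $(X - \nu_\sigma(S))^N$ yields
\[ 0 = \lrangle{(S - \nu_\sigma(S))^N h,\, h'} = \lrangle{h,\, (S^* - \nu_\sigma(S))^N h'}. \]
On $\mathfrak{H}_{\sigma'}$ the operator $S^* - \nu_\sigma(S)$ splits as the nilpotent piece $S^* - \nu_{\sigma'}(S^*)$ plus the scalar $\nu_{\sigma'}(S^*) - \nu_\sigma(S)$. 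Whenever this scalar is nonzero, the full operator (nilpotent plus nonzero scalar) is invertible on $\mathfrak{H}_{\sigma'}$, hence so is its $N$-th power, forcing $\lrangle{h, \mathfrak{H}_{\sigma'}} = 0$. Contrapositively, if $\lrangle{\cdot,\cdot}_{\sigma,\sigma'}$ is not identically zero then $\nu_\sigma(S) = \nu_{\sigma'}(S^*)$ for every $S \in \mathcal{B}$; equivalently, $\nu_\sigma = \nu_{\sigma'} \circ (\cdot)^*$.

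To translate this equality of characters back into an equality of parameters, I invoke Proposition \ref{prop:transpose-parameter}: the character $\nu_{\sigma'} \circ (\cdot)^*$ corresponds under Lafforgue's dictionary to the parameter $\Lgrp{\theta} \circ \sigma'$. Hence $\nu_\sigma = \nu_{\Lgrp{\theta} \circ \sigma'}$, and since the assignment $\sigma \mapsto \nu_\sigma$ is injective on $\hat{G}(\overline{\Q_\ell})$-conjugacy classes of semisimple parameters (the reconstruction half of the pseudo-character theorem), we conclude $\sigma = \Lgrp{\theta} \circ \sigma'$ up to $\hat{G}(\overline{\Q_\ell})$-conjugacy. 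Applying the involution $\Lgrp{\theta}$ to both sides yields the desired $\sigma' = \Lgrp{\theta} \circ \sigma$.

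The real work, and the expected obstacle, is not the formal deduction above but the input Proposition \ref{prop:transpose-1}. Proving it requires tracing $\lrangle{\cdot,\cdot}$ back to Verdier duality on the moduli stacks $\Cht^{(I_1,\ldots,I_k)}_{N,I}/\Xi$ of chtoucas, and then unravelling how Verdier duality interacts with the creation/annihilation morphisms and partial-Frobenius maps that cut out excursion operators, so as to exhibit the Chevalley involution as the shadow of Verdier duality through the geometric Satake equivalence. That geometric computation is the content of \S\ref{sec:transposes}; the argument sketched above is its purely algebraic aftermath.
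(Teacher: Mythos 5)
Your proof is correct and follows essentially the same route as the paper's: both rest on the closure of $\mathcal{B}$ under transpose (via Proposition \ref{prop:transpose-1}), a formal linear-algebra argument showing that $\lrangle{\cdot,\cdot}$ can only pair $\mathfrak{H}_\nu$ nontrivially against $\mathfrak{H}_{\nu^*}$, and Proposition \ref{prop:transpose-parameter} to translate $\nu^*$ into $\Lgrp{\theta}\circ\sigma$. The paper phrases the middle step via the $\mathcal{B}_E$-stable subspace $\mathfrak{H}_\nu^\perp$ and the quotient $\bigoplus_\mu \mathfrak{H}_\mu/(\mathfrak{H}_\mu\cap\mathfrak{H}_\nu^\perp)$, while you argue directly by evaluating the pairing against $(S-\nu_\sigma(S))^N h$ and invoking invertibility of nilpotent-plus-nonzero-scalar; these are equivalent and your version is, if anything, slightly more streamlined.

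One cosmetic point: what you use at the end is not injectivity of $\sigma\mapsto\nu_\sigma$ but well-definedness of the map $\nu\mapsto\sigma$ in the pseudo-character dictionary (each character of $\mathcal{B}$ determines a unique conjugacy class of parameters), so that $\nu_\sigma=\nu_{\Lgrp{\theta}\circ\sigma'}$ forces $\sigma=\Lgrp{\theta}\circ\sigma'$. The content you are invoking is the right one; the phrasing has the arrow pointing the wrong way.
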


The proof of Theorem \ref{prop:global-contragredient} will be accomplished at the end of \S\ref{sec:computation-transpose}.

\subsection{Local--global argument}\label{sec:local-global}
Consider a connected reductive group $G$ over a local field $F$ of characteristic $p$ as in the local setting \S\ref{sec:local-statement}. As usual, $A_G$ stands for the maximal central split torus in $G$, and $\tilde{F}|F$ stands for the splitting field of $G$. Take a maximal torus $T \subset G$ with splitting field equal to $\tilde{F}$. Let $\mathrm{H}^1(W_F, Z_{\hat{G}})$ denote the continuous cohomology with values in $Z_{\hat{G}}(\overline{\Q_\ell})$ with discrete topology. 

The first lemma concerns the Langlands parameterization of smooth characters of $G(F)$. The general case turns out to be delicate: by the discussion in \cite[Appendix A]{LM15}, the usual cohomological construction actually yields an arrow in the opposite direction:
\[\begin{tikzcd}[row sep=small]
	\mathrm{H}^1(\Weil{F}, Z_{\widehat{G}}) \arrow[r] \arrow[hookrightarrow, d] & \left\{ \eta: G(F) \to \overline{\Q_\ell}^\times, \; \text{smooth character} \right\} \\
	\Phi(G) &
\end{tikzcd}\]
It is injective but not necessarily surjective. However, we only need the invert it when $\eta|_{G(F)^1}$ is trivial. This is well-known to experts, and below is a sketch.

\begin{lemma}\label{prop:character-parameter}
	For $G$ as above, there is a canonical homomorphism of groups
	\[ \left\{ \begin{array}{rl}
		\eta: & G(F)/G(F)^1 \to \overline{\Q_\ell}^\times \\
		& \text{smooth character}
		\end{array} \right\}
	\to \mathrm{H}^1\left( \Weil{F}, Z_{\widehat{G}} \right). \]
	Here we do not assume $\mathrm{char}(F) > 0$.
\end{lemma}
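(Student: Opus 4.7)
The plan is to reduce to the local Langlands correspondence for tori via the abelianization $T := G/G^{\mathrm{der}}$, which is an $F$-torus. Every $\chi \in X^*(G)$ vanishes on $G^{\mathrm{der}}$, so $|\chi|_F \equiv 1$ there; whence $G^{\mathrm{der}}(F) \subseteq G(F)^1$, and $\eta$ descends through the map $G(F) \to T(F)$. The induced arrow $\iota : G(F)/G(F)^1 \to T(F)/T(F)^1$ is injective: if $g \in G(F)$ maps into $T(F)^1$, then $|\chi|_F(g) = 1$ for every $\chi \in X^*(G) = X^*(T)$, so $g \in G(F)^1$. Both source and target of $\iota$ are free abelian of rank $r := \mathrm{rk}_{\mathbb{Z}} X^*(G)$ by the Harish-Chandra homomorphism, hence $C := \Coker(\iota)$ is a finite abelian group.

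By divisibility of $\overline{\Q_\ell}^\times$, I extend $\eta$ to some smooth character $\tilde\eta$ of $T(F)/T(F)^1$; any two extensions differ by a finite-order character of $C$. The local Langlands correspondence for tori attaches to $\tilde\eta$ a class $[\tilde\eta] \in \mathrm{H}^1_{\mathrm{cts}}(\Weil{F}, \widehat{T})$. A direct computation on root data identifies $\widehat{T} = \widehat{G^{\mathrm{ab}}}$ with the identity connected component $Z_{\widehat{G}}^0$ of $Z_{\widehat{G}}$: writing $T_G \subset G$ for a maximal torus with coroots $R^\vee \subset X_*(T_G)$, both $\widehat{T}$ and $Z_{\widehat{G}}^0$ have cocharacter lattice $\{x \in X^*(T_G) : \langle x, \alpha^\vee\rangle = 0,\ \forall \alpha^\vee \in R^\vee\}$. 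I then push forward along the canonical inclusion $Z_{\widehat{G}}^0 \hookrightarrow Z_{\widehat{G}}$ and define $[\eta]$ to be the image of $[\tilde\eta]$ in $\mathrm{H}^1(\Weil{F}, Z_{\widehat{G}})$.

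The genuine obstacle is verifying that $[\eta]$ is independent of the chosen extension. If $\chi := \tilde\eta_1/\tilde\eta_2$ is the ratio of two extensions, then $\chi$ is a character of $T(F)$ trivial on the image of $G(F)$. I would compose with the injective arrow $\mathrm{H}^1(\Weil{F}, Z_{\widehat{G}}) \hookrightarrow \Hom_{\mathrm{sm}}(G(F), \overline{\Q_\ell}^\times)$ recalled from \cite{LM15} in the excerpt. By the naturality of that construction with respect to $G \to T$, the composite
\[ \mathrm{H}^1(\Weil{F}, \widehat{T}) \longrightarrow \mathrm{H}^1(\Weil{F}, Z_{\widehat{G}}) \longrightarrow \Hom_{\mathrm{sm}}(G(F), \overline{\Q_\ell}^\times) \]
coincides with LLC for tori followed by pullback along $G(F) \to T(F)$. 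Evaluated at $[\chi]$ it yields the trivial character of $G(F)$, so the asserted injectivity forces the image of $[\chi]$ in $\mathrm{H}^1(\Weil{F}, Z_{\widehat{G}})$ to vanish. The map $\eta \mapsto [\eta]$ is then a well-defined group homomorphism. The crux is the naturality of the LM15 arrow, which should be transparent from its definition via the canonical Langlands pairing between $G(F)$ and $\mathrm{H}^1(\Weil{F}, Z_{\widehat{G}})$; no input beyond the injectivity already provided by \cite{LM15} is required.
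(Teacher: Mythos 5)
Your proof is correct and takes a genuinely different route from the paper's. The paper works \emph{upward} through a $z$-extension $1 \to C \to G_1 \xrightarrow{p} G \to 1$: since $G_1^{\mathrm{der}}$ is simply connected with $\mathrm{H}^1(F, G_1^{\mathrm{der}}) = 0$, the pull-back $\eta \circ p$ genuinely descends to a character of $S(F)$ where $S := G_1/G_1^{\mathrm{der}}$, LLC for $S$ gives a class in $\mathrm{H}^1(\Weil{F}, Z_{\widehat{G_1}})$, and the fact that $C$ is induced yields the identification $\mathrm{H}^1(\Weil{F}, Z_{\widehat{G}}) \simeq \Ker[\mathrm{H}^1(\Weil{F}, Z_{\widehat{G_1}}) \to \mathrm{H}^1(\Weil{F}, \widehat{C})]$. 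You instead work \emph{downward} through $T := G/G^{\mathrm{der}}$, extending $\eta$ across the finite cokernel of $G(F)/G(F)^1 \hookrightarrow T(F)/T(F)^1$ and pushing forward along $\widehat{T} \simeq Z_{\widehat{G}}^\circ \hookrightarrow Z_{\widehat{G}}$. Interestingly, the referee's alternative in Remark \ref{rem:new-reduction} also goes through $T = G/G^{\mathrm{der}}$, though it resolves the ambiguity by local trivial functoriality rather than through $\mathrm{H}^1(\Weil{F}, Z_{\widehat{G}})$.

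The one caveat concerns the naturality of the \cite{LM15} arrow with respect to $G \to T$, which your well-definedness argument hinges on and which you call ``transparent.'' It is true but not free: verifying that $\mathrm{H}^1(\Weil{F}, \widehat{T}) \to \mathrm{H}^1(\Weil{F}, Z_{\widehat{G}}) \to \Hom_{\mathrm{sm}}(G(F), \overline{\Q_\ell}^\times)$ agrees with LLC-for-tori followed by pull-back along $G(F) \to T(F)$ requires unpacking the $z$-extension definition of the \cite{LM15} map: one identifies $Z_{\widehat{G}}^\circ \hookrightarrow Z_{\widehat{G_1}} = \widehat{S}$ with the dual of $S \twoheadrightarrow T$, invokes functoriality of LLC for tori under $S \twoheadrightarrow T$, and uses surjectivity of $G_1(F) \to G(F)$ (again from $C$ being induced). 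So the hidden bookkeeping in your proof is of roughly the same weight as what the paper performs explicitly, and the claim that ``no input beyond the injectivity ... is required'' is an overstatement. Granting that naturality, the remaining steps — injectivity and finite cokernel of $\iota$, extension by divisibility of $\overline{\Q_\ell}^\times$, well-definedness via injectivity of the \cite{LM15} arrow — are clean and correct.
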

\begin{proof}
	Fix $\eta$. First, one can take a $z$-extension of $G$ as in \cite[Proof of Lemma A.1]{LM15}, i.e. a central extension
	\[ 1 \to C \to G_1 \xrightarrow{p} G \to 1, \quad C: \text{induced torus}, \quad G^\mathrm{der}_1: \text{simply connected}.  \]
	Then $\eta_1 := \eta \circ p$ is trivial on $G_1(F)^1$. We know that $\mathrm{H}^1(F, G_1^\text{der})$ is trivial. Put $S := G_1/G_1^\text{der}$ so that $G_1(F)/G_1^\text{der}(F) \rightiso S(F)$ and $\widehat{S} \simeq Z_{\widehat{G_1}} = Z_{\widehat{G_1}}^\circ$. Then $G_1^\text{der}(F) \subset G_1(F)^1$ implies that $\eta_1$ factors through $S(F)$. The local classfield theory affords an element $a \in \mathrm{H}^1(\Weil{F}, Z_{\widehat{G_1}})$. Since $\eta_1|_C = 1$, we infer that $a$ has trivial image in $\mathrm{H}^1(\Weil{F}, \widehat{C})$. 

	Furthermore, using that fact that $C$ is induced, in \textit{loc.\ cit.} is constructed a natural isomorphism
	\[ \mathrm{H}^1(\Weil{F}, Z_{\widehat{G}}) \simeq \Ker\left[ \mathrm{H}^1(\Weil{F}, Z_{\widehat{G_1}}) \to \mathrm{H}^1(\Weil{F}, \widehat{C}) \right]. \]
	All in all, we obtain $a \in \mathrm{H}^1(\Weil{F}, Z_{\widehat{G}})$. It is routine to check that $\eta \mapsto a$ is independent of the choice of $z$-extensions, cf.\ \textit{loc.\ cit.}
\end{proof}
In fact, $\eta$ corresponds to some class in $\mathrm{H}^1(\Weil{F}/I_F, Z_{\widehat{G}}^{I_F})$. To see this, one readily reduces to the case of a torus $S$ as above. Since $S(F)^1$ contains the parahoric subgroup, one can infer, for example by the Satake isomorphism \cite[Proposition 1.0.2]{HR10} for $S$, that we obtain a parameter in $\mathrm{H}^1(\Weil{F}/I_F, \widehat{S}^{I_F})$.

The second lemma concerns the globalization of groups.
\begin{lemma}\label{prop:globalization-group}
	Given $G$ and $F$ as above, one can choose
	\begin{compactitem}
		\item $\mathring{F}$: a global field of characteristic $p$;
		\item $\mathring{G}$: a connected reductive $\mathring{F}$-group with maximal $\mathring{F}$-torus $\mathring{T}$, sharing the same splitting field $\widetilde{\mathring{F}} | \mathring{F}$;
		\item $v$: a place of $\mathring{F}$, and $w$ is the unique place of $\widetilde{\mathring{F}}$ lying over $v$, in particular $\Gal(\widetilde{\mathring{F}} | \mathring{F})$ equals the decomposition group $\Gamma_w := \Gal(\widetilde{\mathring{F}}_w | \mathring{F}_v)$;
	\end{compactitem}
	such that
	\begin{compactitem}
		\item there exist isomorphisms $\mathring{F}_v \simeq F$, $\widetilde{\mathring{F}}_w \simeq \tilde{F}$, which identify $\Gamma := \Gal(\tilde{F}|F)$ with $\Gamma_w$;
		\item under the identifications above, there is an isomorphism
		$\begin{tikzcd}[row sep=tiny, column sep=small]
			\mathring{G}_{\mathring{F}_v} \arrow[r, "\sim"] & G \\
			\mathring{T}_{\mathring{F}_v} \arrow[r, "\sim"] \arrow[phantom, u, "\subset" description, sloped] & T \arrow[phantom, u, "\subset" description, sloped]
		\end{tikzcd}$, i.e.\ $\mathring{G} \supset \mathring{T}$ is an $\mathring{F}$-model of $G \supset T$;
		\item $\mathring{G}$ and $G$ share the same root datum endowed with actions of $\Gamma \simeq \Gamma_w$, relative to $\mathring{T}$ and $T$ respectively.
	\end{compactitem}
\end{lemma}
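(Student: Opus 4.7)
The plan is to globalize in four stages — the base field and place, the splitting field, the torus, and the group — culminating in the group globalization, which is the main source of difficulty.

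First I would let $\F_q$ denote the residue field of $F$, take $X = \mathbb{P}^1_{\F_q}$, $\mathring{F} = \F_q(X)$, and choose $v$ to be a degree-one closed point of $X$ with $\mathring{F}_v \simeq F$. To globalize the splitting field, write $\tilde{F} = F(\alpha)$ with minimal polynomial $f \in F[T]$, approximate $f$ by $\mathring{f} \in \mathring{F}[T]$ sufficiently $v$-adically close, and apply Krasner's lemma to obtain a root $\mathring{\alpha} \in \tilde{F}$ of $\mathring{f}$ with $F(\mathring{\alpha}) = \tilde{F}$. The splitting field $\widetilde{\mathring{F}}$ of $\mathring{f}$ over $\mathring{F}$, embedded in $\bar{F}$ via the chosen root, satisfies $\widetilde{\mathring{F}}_w = \tilde{F}$ at the induced place $w$. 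To force the decomposition group $\Gamma_w$ to equal the full $\Gal(\widetilde{\mathring{F}}|\mathring{F})$, I would replace $\mathring{F}$ by the fixed field $\widetilde{\mathring{F}}^{\Gamma_w}$ and $v$ by the place beneath $w$; a degree count confirms the new completion at $v$ is still $F$.

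Next, the torus $\mathring{T}$ comes from the anti-equivalence between $F$-tori with splitting field $\tilde{F}$ and finite free $\Gamma$-modules, $T \leftrightarrow X^*(T)$: define $\mathring{T}$ over $\mathring{F}$ with the same character module via $\Gal(\widetilde{\mathring{F}}|\mathring{F}) = \Gamma$, so that $\mathring{T}_{\mathring{F}_v} \simeq T$. For the group, I would fix a pinned split Chevalley form $(G^\natural, T^\natural)$ over $\F_q$ with root datum equal to that of $(G, T)$. Galois descent presents $(G, T)$ as the twist of $(G^\natural_{\tilde{F}}, T^\natural_{\tilde{F}})$ by a $1$-cocycle $c : \Gamma \to \Aut(G^\natural, T^\natural)(\tilde{F})$. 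Its composition with the projection onto the finite constant group $\mathrm{Out}(G^\natural)$ records the $\Gamma$-action on the based root datum and lifts canonically over $\mathring{F}$. The remaining inner part takes values in a smooth group whose identity component is a torus; the task is to lift $c$ to a global cocycle $\mathring{c} : \Gamma \to \Aut(G^\natural, T^\natural)(\widetilde{\mathring{F}})$ representing the same cohomology class when localized at $w$. Twisting $(G^\natural, T^\natural)_{\mathring{F}}$ by $\mathring{c}$ then yields $(\mathring{G}, \mathring{T})$ with the prescribed properties.

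The main obstacle is the cocycle lifting in the last step: one must ensure that the values $c(\gamma)$ can be approximated by $\widetilde{\mathring{F}}$-points in a way that satisfies the cocycle condition exactly while staying in the given cohomology class. Since the outer part is already globally defined, this reduces to a question about tori, which I would handle using weak approximation and density of $\widetilde{\mathring{F}}$-points in $\tilde{F}$-points for smooth varieties (via the implicit function theorem), possibly after enlarging $\widetilde{\mathring{F}}$ further or choosing the curve $X$ more judiciously to kill any Tate--Shafarevich obstruction to surjectivity of the localization map on $\mathrm{H}^1$.
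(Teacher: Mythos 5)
The paper does not actually prove this lemma; it writes ``Standard. See for instance [Ar88-2, p.526] or [Vig01, 3.12].'' So your proposal is not competing with a detailed argument in the paper, but rather with the standard globalization arguments referred to there. Your four-stage outline — base field and place, splitting field via Krasner, torus via character lattices, group via Galois twisting — is the right skeleton, and the first three stages are handled correctly (the replacement of $\mathring{F}$ by $\widetilde{\mathring{F}}^{\Gamma_w}$ to force $\Gamma_w = \Gal(\widetilde{\mathring{F}}|\mathring{F})$ is a standard and correct device).

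The problem is the final stage, which you yourself flag as the main obstacle: your proposed resolution is too vague to be called a proof, and the specific reductions you indicate do not go through as stated. First, the kernel of $\Aut(G^\natural, T^\natural) \to \mathrm{Out}(G^\natural)$ is $N_{G^\natural_{\mathrm{ad}}}(T^\natural)$, whose component group is the full Weyl group $W$, and the extension $1 \to T^\natural_{\mathrm{ad}} \to N_{G^\natural_{\mathrm{ad}}}(T^\natural) \to W \to 1$ is in general non-split; so the ``remaining inner part'' is not a cocycle valued in a torus, and the problem does not cleanly reduce to a torus question. Second, the implicit-function-theorem density argument applies to smooth schemes, but the scheme $Z^1(\Gamma, \Aut(G^\natural, T^\natural))$ cut out by the cocycle condition need not be smooth, so approximating the local cocycle $c$ by global points while preserving the cocycle relation exactly is not automatic. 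Third, ``enlarging $\widetilde{\mathring{F}}$'' is not a free move here: $\widetilde{\mathring{F}}$ is required to be precisely the splitting field of $\mathring{G}$ and $\mathring{T}$, so any enlargement must come with a verification that the twisted group still splits exactly over it.

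A cleaner route, and the one that the cited references essentially follow, is to decouple the group from the torus: first globalize only the based root datum with its $\Gamma$-action, which immediately produces the quasi-split inner form $\mathring{G}^*$ over $\mathring{F}$; then globalize the inner twist class in $H^1(F, G^*_{\mathrm{ad}})$ using the Kottwitz-type description of $H^1$ of adjoint groups and the surjectivity of the localization map onto a single place (compensating at an auxiliary place), which is controlled because the coefficient group is connected; and only then choose $\mathring{T}$ inside the resulting $\mathring{G}$ by weak approximation on the $\mathring{F}$-variety of maximal tori of $\mathring{G}$, which is smooth and has an $\mathring{F}_v$-point corresponding to $T$. This avoids doing Galois cohomology with the disconnected group $\Aut(G^\natural, T^\natural)$ altogether. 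I would recommend you either switch to this decoupled strategy or, if you keep your version, supply a genuine argument that the localization map on $H^1(\Gamma, N_{G^\natural_{\mathrm{ad}}}(T^\natural)(\cdot) \rtimes \mathrm{Out})$ hits the class $[c]$.
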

\begin{proof}
	Standard. See for instance \cite[p.526]{Ar88-2} or \cite[3.12]{Vig01}.
\end{proof}

\begin{remark}\label{rem:globalization-group}
	The matching of root data in Lemma \ref{prop:globalization-group} also implies that $A_{\mathring{G}}$ is ``the same'' as $A_G$. Hereafter, we shall drop the clumsy notation $\mathring{G}$, $\mathring{T}$ or $A_{\mathring{G}}$, and denote them abusively as $G$, $T$ or $A_G$ instead.
	
	For any closed discrete subgroup $\Xi \subset A_G(F)$ isomorphic to $\Z^{\dim A_G}$, its isomorphic image in $A_G(\mathring{F}) \backslash A_G(\A)$ will also be denoted by $\Xi$. Another consequence of Lemma \ref{prop:globalization-group} is that $\Xi$ is a co-compact lattice in $A_G(\mathring{F}) \backslash A_G(\A)$ satisfying the requirements in \S\ref{sec:cusp-forms}.
\end{remark}

\begin{proof}[Proof of Theorem \ref{prop:local-contragredient} from Theorem \ref{prop:global-contragredient}]
	In what follows, we write $\pi \leadsto \phi$ if $\pi \in \Pi(G)$ has Genestier--Lafforgue parameter $\phi \in \Phi(G)$. It suffices to show that for every $\pi \in \Pi(G)$,
	\begin{equation}\label{eqn:local-global-aux}
		\left( \pi \leadsto \phi \right) \implies \left( \check{\pi} \leadsto \Lgrp{\theta} \circ \phi \right).
	\end{equation}
	Indeed, this assertion amounts to $\{\check{\pi} : \pi \in \Pi_\phi \} \subset \Pi_{\Lgrp{\theta} \circ \phi}$. The reverse inclusion will follow by applying \eqref{eqn:local-global-aux} to any $\pi_1 \in \Pi(G)$ with $\pi_1 \leadsto \Lgrp{\theta} \circ \phi$, which in turn yields $\pi := \check{\pi}_1 \leadsto \Lgrp{\theta} \circ \Lgrp{\theta} \circ \phi = \phi$ whilst $\pi_1 = \check{\pi}$.

	The assertion \eqref{eqn:local-global-aux} will be established in steps.

	\textit{Step 1.} We reduce \eqref{eqn:local-global-aux} to the case $\pi$ supercuspidal. Indeed, let $(M,\tau)$ be the cuspidal support of $\pi$ reviewed in \S\ref{sec:rep}. By Proposition \ref{prop:sc-support}, $\check{\pi}$ has cuspidal support $(M,\check{\tau})$.

	On the dual side, choose an $L$-embedding $\iota: \Lgrp{M} \hookrightarrow \Lgrp{G}$ as reviewed in \S\ref{sec:L-parameters}. Suppose that $\tau \leadsto \phi_\tau$ in $M$. By \cite[Théorème 0.1]{GL17}, $\phi$ equals to the composite of $\Weil{F} \xrightarrow{\phi_\tau} \Lgrp{M} \hookrightarrow \Lgrp{G}$ up to $\hat{G}(\overline{\Q_\ell})$-conjugacy. The same relation holds for the parameters for $\check{\pi}$ and $\check{\tau}$. Denoting $\Lgrp{\theta_M}$ the Chevalley involution on $\Lgrp{M}$, the diagram
	\[\begin{tikzcd}
		\Lgrp{M} \arrow[d, "{\Lgrp{\theta_M}}"'] \arrow[hookrightarrow, r, "\iota"] & \Lgrp{G} \arrow[d, "{\Lgrp{\theta}}"] \\
		\Lgrp{M} \arrow[hookrightarrow, r, "\iota"] & \Lgrp{G}
	\end{tikzcd}\]
	is commutative up to an explicit $\hat{G}(\overline{\Q_\ell})$-conjugacy, by \cite[\S 5, Lemma 4]{Pra18}. Upon replacing $(G, \pi)$ by $(M, \tau)$, we have reduced \eqref{eqn:local-global-aux} to the supercuspidal case.
		
	\textit{Step 2.} Consider the smooth character $\omega := \omega_\pi|_{A_G(F)}$. We reduce \eqref{eqn:local-global-aux} to the case that $\omega$ is of finite order as follows (see also Remark \ref{rem:new-reduction}). First, recalling \eqref{eqn:G(F)1}, there exists a character
	\[ \eta_0: A_G(F)/A_G(F)^1 \to \overline{\Q_\ell}^\times \]
	such that $\eta_0 \otimes \omega$ is of finite order. Indeed, this is easily reduced to the case $A_G \simeq \Gm$, and it suffices to take $\eta_0(\varpi) = \omega(\varpi)^{-1}$ where $\varpi \in F^\times$ is some uniformizer.

	Secondly, the inclusion of discrete free commutative groups of finite type
	\[ A_G(F)/ A_G(F)^1 = A_G(F)/A_G(F) \cap G(F)^1 \hookrightarrow G(F)/G(F)^1 \]
	has finite cokernel, whereas $\overline{\Q_\ell}^\times$ is divisible. Therefore $\eta_0$ extends to a smooth character $\eta: G(F)/G(F)^1 \to \overline{\Q_\ell}^\times$. The central character of $\pi \otimes \eta$ has finite order when restricted to $A_G(F)$.

	Attach $a \in \mathrm{H}^1(\Weil{F}, Z_{\hat{G}})$ to $\eta$ by Lemma \ref{prop:character-parameter}; it can be used to twist elements of $\Phi(G)$ by the homomorphism
	\[ \Weil{F} \ltimes (Z_{\widehat{G}} \times \widehat{G}) \to \Weil{F} \ltimes \widehat{G}, \quad w \ltimes (z, g) \mapsto w \ltimes (zg) \]
	by choosing any cocycle representative of $a$; see \cite[Remarque 0.2]{GL17}.

	In the construction above, $- \otimes \eta^{-1}$ corresponds to twisting a parameter by $a^{-1}$. We have $(\pi \otimes \eta)^\vee \simeq \check{\pi} \otimes \eta^{-1}$. Concurrently, $\Lgrp{\theta} \circ (\phi \cdot a) = (\Lgrp{\theta} \circ \phi) \cdot a^{-1}$ since Chevalley involution acts as $z \mapsto z^{-1}$ on the center. Therefore, by replacing $\pi$ by $\pi \otimes \eta$, it suffices to prove \eqref{eqn:local-global-aux} when $\omega$ has finite order.

	\textit{Step 3.} Now we can assume $\pi$ to be integral supercuspidal (see \S\ref{sec:rep}) with $\omega := \omega_\pi|_{A_G(F)}$ of finite order. By \cite{GL17}, we know that the parameter $\phi$ of $\pi$ factors through $\Gal(\overline{F}|F)$. Take a global $\mathring{F}$-model of $G \supset A_G$ as in Lemma \ref{prop:globalization-group} with $\mathring{F}_v \simeq F$. As $A_G$ is split over $\mathring{F}$, by reducing to $\Gm$ and applying \cite[Chapter X, \S 2, Theorem 5]{AT}, there exists an automorphic character
	\[ \mathring{\omega} = \bigotimes_u \mathring{\omega}_u: A_G(\mathring{F}) \backslash A_G(\A) \to \overline{\Q_\ell}^\times \]
	of finite order, such that $\mathring{\omega}_v = \omega$.
	
	Since $\omega$ is smooth, there exists a closed discrete subgroup $\Xi \subset A_G(F)$ such that $\omega|_\Xi = 1$ and $\Xi \simeq \Z^{\dim A_G}$. In view of Remark \ref{rem:globalization-group}, $\Xi$ also affords the co-compact lattice in $A_G(\mathring{F}) \backslash A_G(\A)$ required in \S\ref{sec:cusp-forms}.
	
	Claim: there exists a cuspidal automorphic representation $\mathring{\pi} = \bigotimes_u \mathring{\pi}_u$ of $G(\A)$ (in the extended sense that we consider all $G_\alpha$ simultaneously, $\alpha \in \Ker^1(\mathring{F}, G)$) such that
	\begin{itemize}
		\item the central character of $\mathring{\pi}$ equals $\mathring{\omega}$ on $A_G(\A)$;
		\item we have $\mathring{\pi}_v \simeq \pi$;
		\item relative to the chosen lattice $\Xi$ and a sufficiently deep level $N$, the $C_c(K_N \backslash G(\A)/K_N; \overline{\Q_\ell})$-module $\mathring{\pi}^{K_N}$ can be embedded in some summand $\mathfrak{H}_\sigma$ in \eqref{eqn:global-decomp}.
	\end{itemize}
	This can be achieved by the following variant of the argument in \cite[Appendice 1]{He84} (which works over $\CC$) via Poincaré series; see also the proof of \cite[Lemme 1.4]{GL17}. For each place $u$ of $\mathring{F}$, choose a smooth function $f_u \in C_c(G(\mathring{F}_u), \mathring{\omega}_u)$ such that
	\begin{compactitem}
		\item there exists a finite set $S$ of places of $\mathring{F}$ containing $v$ and the ramification locus of $G$, such that when $u \notin S$, the function $f_u$ is right $G(\mathfrak{o}_u)$-invariant, supported on $A_G(\mathring{F}_u)G(\mathfrak{o}_u)$ and $f_u(1) = 1$, where $G(\mathfrak{o}_u)$ is the hyperspecial subgroup arsing from some reductive model of $G$ over the ring of $S$-integers in $\mathring{F}$;
		\item we require $f_v$ to a matrix coefficient of $\pi$ and assume $f_v(1) \neq 0$;
		\item for every $u \in S \smallsetminus \{v\}$, we require that $C_u := \Supp|f_u|$ is a sufficiently small neighborhood of $1$ modulo $A_G(F_u)$, so that the image of
		\[ \Supp(f_v) \times \prod_{u \in S \smallsetminus \{v\}} C_u \times \prod_{u \notin S} G(\mathfrak{o}_u) A_G(\mathring{F}_u) \]
		in $A_G(\A) \backslash G(\A) = (A_G \backslash G)(\A)$ intersects $A_G(\mathring{F}) \backslash G(\mathring{F}) = (A_G \backslash G)(\mathring{F})$ only at $1$. To see why this can be achieved, embed $A_G \backslash G$ into some affine space over $F$.
	\end{compactitem}
	Take $f := \prod_u f_u: G(\A) \to \overline{\Q_\ell}$ and form
	\[ P_f(g) = \sum_{\gamma \in (A_G \backslash G)(\mathring{F})} f(\gamma g), \quad g \in G(\A). \]
	The sum is finite when $g$ is constrained in any compact subset modulo $A_G(\A)$. By choosing $N$ sufficiently deep, it furnishes an element of $C_c(G(\mathring{F}) \backslash G(\A)/K_N \Xi ; \overline{\Q_\ell})$. Moreover, $P_f(1) = f(1) \neq 0$ by the condition on supports. By looking at $f_v$, we see that $P_f$ is a cusp form.
	
	Decompose $\Ccusp(\Bun_{G, N}(\F_q)/\Xi; \overline{\Q_\ell})$ into simple submodules as in Remark \ref{rem:realize-in-H}. There exists a summand $\mathcal{L}$ contained in some $\mathfrak{H}_\sigma$ such that $P_f$ has nonzero component in $\mathcal{L}$. Let $\mathring{\pi}$ be the cuspidal automorphic representation corresponding to $\mathcal{L}$ via Proposition \ref{prop:K_N-invariants} (realized in $\bigoplus_\alpha \Ccusp(G_\alpha(\mathring{F}) \backslash G(\A) \cdots)$) where $\alpha \in \Ker^1(\mathring{F}, G)$) so that $\mathring{\pi}^{K_N} = \mathcal{L} \hookrightarrow \mathfrak{H}_\sigma$. Then $\mathring{\pi}$ has central character $\mathring{\omega}$ on $A_G(\A)$ and $\mathring{\pi}_v \simeq \pi$, since $P_f$ and $\mathcal{L}$ have similar properties under $C_c(K_N \backslash G(\A)/K_N; \overline{\Q_\ell})$.

	\textit{Step 4.} Since the integration pairings $\lrangle{\cdot, \cdot}$ of Remark \ref{rem:pairing-level} are non-degenerate, $\mathring{\pi}^{K_N} \subset \mathfrak{H}_\sigma$ must pair non-trivially with some simple $C_c(K_N \backslash G(\A) /K_N; \overline{\Q_\ell})$-submodule of some $\mathfrak{H}_{\sigma'}$. Proposition \ref{prop:K_N-invariants} implies that the simple submodule takes the form $(\mathring{\pi}')^{K_N} \subset \mathfrak{H}_{\sigma'}$ for some cuspidal automorphic representation $\mathring{\pi}'$. 

	Theorem \ref{prop:global-contragredient} then asserts $\sigma' = \Lgrp{\theta} \circ \sigma$ in $\Phi(G)$ (global version). On the other hand, $\mathring{\pi}'$ pairs non-trivially with $\mathring{\pi}$ under the integration pairing $\lrangle{\cdot, \cdot}$ of Definition \ref{def:integration-pairing}. The invariance of $\lrangle{\cdot, \cdot}$ therefore implies
	\begin{gather*}
		\bigotimes_u (\mathring{\pi}_u)^\vee = \mathring{\pi}^\vee \simeq \mathring{\pi}' \quad \text{as $G(\A)$-representations}.
	\end{gather*}
	The local-global compatibility in \cite[Théorème 0.1 (b)]{GL17} says that
	\begin{gather*}
		\pi \simeq \mathring{\pi}_v \leadsto (\sigma|_{\Gal(\overline{F}|F)} )^\mathrm{ss}, \\ 
		\check{\pi} \simeq (\mathring{\pi}_v)^\vee \simeq \mathring{\pi}'_v \leadsto (\sigma'|_{\Gal(\overline{F}|F)} )^\mathrm{ss} = \left( \Lgrp{\theta} \circ \sigma|_{\Gal(\overline{F}|F)} \right)^\mathrm{ss}.
	\end{gather*}
	Here we choose an embedding of the separable closure of $\mathring{F}$ into $\overline{F}$, and the semi-simplification is defined as in \S\ref{sec:L-parameters}. In particular, $\phi = (\sigma|_{\Gal(\overline{F}|F)} )^\mathrm{ss}$ in $\Phi(G)$ (local version).
	
	By Lemma \ref{prop:commutation-ss} we have $\left( \Lgrp{\theta} \circ \sigma|_{\Gal(\overline{F}|F)} \right)^\mathrm{ss} = \Lgrp{\theta} \circ \left( \sigma|_{\Gal(\overline{F}|F)} \right)^\mathrm{ss}$. Summarizing,
	\[ \check{\pi} \leadsto \Lgrp{\theta} \circ \left( \sigma|_{\Gal(\overline{F}|F)} \right)^\mathrm{ss} = \Lgrp{\theta} \circ \phi \]
	holds in $\Phi(G)$ (local version). This establishes \eqref{eqn:local-global-aux} and the Theorem \ref{prop:local-contragredient} follows.
\end{proof}

\begin{remark}\label{rem:new-reduction}
	As pointed out by a referee, Lemma \ref{prop:character-parameter} can be avoided in Step 2 by the following arguments. Restrict $\mathrm{quot}: G \to T := G/G_{\mathrm{der}}$ to an isogeny $A_G \to T$. The same arguments show that some smooth character $\eta: T(F) \to \overline{\Q_\ell}^\times$ pulls back to our given $\eta_0: A_G(F) \to \overline{\Q_\ell}^\times$. To complete Step 2, it remains to compare (a) the parameters of $\eta$ and $\eta^{-1}$, (b) the parameters of $\pi$ and $\pi \otimes \eta$. For (a), apply local trivial functoriality \cite[Théorème 8.1]{GL17} to the automorphism $t \mapsto t^{-1}$ of $T$. For (b), apply it to the homomorphism $G \xrightarrow{(\identity, \mathrm{quot})} G \times T$ with normal image, as performed in \cite[Remarque 0.3]{GL17}.
\end{remark}

From \S\ref{sec:overview} onwards, we will focus exclusively on Theorem \ref{prop:global-contragredient} and the underlying geometric considerations.

\subsection{Remarks on the duality involution}
Conserve the assumptions for the local statement in \S\ref{sec:local-statement} and assume $G$ is quasisplit. Fix an $F$-pinning $\mathcal{P} = (B, T, (X_\alpha)_\alpha)$ of $G$. Choose the unique $\kappa \in T^\text{ad}(F)$ such that $\kappa X_\alpha \kappa^{-1} = -X_\alpha$, for all simple root $\alpha$ with respect to $(B,T)$. Observe that $\kappa^2 = 1$ in $G^\text{ad}$.

Let $\theta = \theta_{\mathcal{P}}$ be the Chevalley involution of $G$, and $\iota_-$ be the inner involution $g \mapsto \kappa g \kappa^{-1}$. Observe that $\iota_- \theta = \theta \iota_-$. Indeed, $\iota_- \theta \iota_-$ is seen to preserve $\mathcal{P}$ and coincides with $\theta$ on $T$, hence $\iota_- \theta \iota_- = \theta$ by the characterization of the Chevalley involution.

\begin{definition}[{Cf.\ \cite[\S 3]{Pra18}}]
	Relative to the $F$-pinning $\mathcal{P}$, set $\iota_{G, \mathcal{P}} := \iota_- \theta = \theta \iota_-$. It is called the \emph{duality involution} of $G$.
\end{definition}

Recall that $\iota_{G, \mathcal{P}}$ induces a pinned automorphism of $\hat{G}$, called the \emph{dual} automorphism of $\iota_{G, \mathcal{P}}$, which depends only on $\iota_{G, \mathcal{P}}$ modulo $G^\mathrm{ad}(F)$; see \cite[\S 2.5]{Bo79} for the general set-up. This recipe applies to any base field $F$.

\begin{lemma}\label{prop:dual-Chevalley}
	The Chevalley involution on $\hat{G}$ is the dual of $\iota_{G, \mathcal{P}}$ in the sense above. This result holds over any field $F$. 
\end{lemma}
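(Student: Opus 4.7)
The plan is to reduce the claim to a computation on based root data, exploit that $\iota_{G,\mathcal{P}}$ differs from $\theta_{\mathcal{P}}$ by an inner automorphism, and then show directly that the Chevalley involution is self-dual.

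First, I would observe that the dual-automorphism recipe of \cite[\S 2.5]{Bo79} depends only on the class of the automorphism modulo $\mathrm{Inn}(G)$, because it is defined by passing to the induced automorphism of the based root datum $\Psi_0(G) = (X^*(T), \Delta, X_*(T), \Delta^\vee)$. Since $\kappa \in T^{\mathrm{ad}}(F)$, the automorphism $\iota_- = \mathrm{Ad}(\kappa)$ is inner, so $\iota_{G,\mathcal{P}} = \iota_- \circ \theta$ and $\theta$ represent the same element of $\mathrm{Out}(G)$, hence induce the same automorphism of $\Psi_0(G)$ and therefore have the same dual. This reduces the lemma to proving that the dual of $\theta = \theta_{\mathcal{P}}$ is the Chevalley involution $\theta_{\hat{G}}$ of $\hat{G}$.

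Second, I would compute the induced action of $\theta$ on $\Psi_0(G)$. Since $\theta(t) = w_0(t^{-1}) = \dot{w}_0\, t^{-1}\, \dot{w}_0^{-1}$ on $T$, a direct manipulation (using additive notation on $X^*(T)$ and $X_*(T)$) shows
\[ \theta^* = -w_0 \quad \text{on } X^*(T), \qquad \theta_* = -w_0 \quad \text{on } X_*(T). \]
(Here one uses $w_0^2 = 1$, and the two signs agree since $w_0$ is its own inverse and orthogonal for the Weyl-invariant pairing.) The map $-w_0$ is the opposition involution, which preserves $\Delta$ and $\Delta^\vee$, so $\theta$ is indeed a based-root-datum automorphism, as it must be for a pinned automorphism.

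Third, I would conclude by transporting to $\hat{G}$. Under the canonical identification of the dual based root datum $\Psi_0(\hat{G}) \simeq (X_*(T), \Delta^\vee, X^*(T), \Delta)$, the dual of $\theta$ corresponds to the same pair $(-w_0, -w_0)$ (merely with the roles of character and cocharacter lattices exchanged); call the pinned lift $\hat{\theta}$. On the other hand, the same computation applied to $\hat{G}$ with pinning $\hat{\mathcal{P}}$ shows that $\theta_{\hat{G}}$ acts on $X^*(\hat{T}) = X_*(T)$ as $-\hat{w}_0$, where $\hat{w}_0 \in W(\hat{G}, \hat{T})$ is the longest element. Under the canonical isomorphism $W(G,T) \simeq W(\hat{G}, \hat{T})$, the longest elements correspond (both are characterized by swapping positive and negative systems, and the positive systems match under duality). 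Hence $\theta_{\hat{G}}$ and $\hat{\theta}$ induce the same automorphism of $\Psi_0(\hat{G})$. Since both are pinned automorphisms of $\hat{G}$, and pinned automorphisms are determined by their action on the based root datum, they coincide. Finally, the $\Gal(\tilde F|F)$-equivariance is automatic because $\mathcal{P}$ is an $F$-pinning and the constructions are canonical.

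The main obstacle is purely bookkeeping: keeping straight the distinction between $\theta^*$ on $X^*(T)$ and $\theta_*$ on $X_*(T)$, and matching the identification of Weyl groups $W(G,T) \simeq W(\hat{G}, \hat{T})$ so that $w_0$ and $\hat{w}_0$ are correctly paired. Beyond these sign- and convention-checks, nothing substantial is at stake; the lemma is essentially the observation that the Chevalley involution is self-dual under Langlands duality.
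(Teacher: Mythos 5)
Your proof follows the same route as the paper's: first observe that $\iota_-$ being inner makes $\iota_{G,\mathcal{P}}$ and $\theta_{\mathcal{P}}$ have the same dual, then compute that both Chevalley involutions act as $-w_0$ on the relevant (co)character lattices and check that these are compatible under the duality swap, using $w_0^2 = 1$. Your write-up spells out the matching of longest Weyl elements and the Galois-equivariance a bit more explicitly than the paper, but these are the same bookkeeping steps, not a different argument.
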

\begin{proof}
	Since $\iota_-$ comes from $G^\mathrm{ad}(F)$-action, $\iota_{G,\mathcal{P}}$ and $\theta$ have the same dual. It suffices to show that the Chevalley involution of $\hat{G}$ is dual to that of $G$. Since both automorphisms are pinned, it suffices to show that the induced automorphisms on $X_*(T_{\overline{F}})$ and $X^*(T_{\overline{F}})$ are mutually dual. Recall that the Chevalley involution of $G$ (resp.\ $\hat{G}$) acts on $X_*(T_{\overline{F}})$ (resp.\ $X^*(T_{\overline{F}})$) as $x \mapsto -w_0(x)$, where $w_0$ is the longest element in the Weyl group. Since $w_0^2 = 1$, these two automorphisms are indeed mutually dual.
\end{proof}

Fix a nontrivial smooth character $\psi: F \to \overline{\Q_\ell}^\times$. From the $F$-pinning $\mathcal{P} = (B,T,(X_\alpha)_\alpha)$ we produce a \emph{Whittaker datum} $\mathfrak{w} := (U, \chi)$ for $G$ taken up to $G(F)$-conjugacy, that is,
\begin{compactitem}
	\item $U$ is the unipotent radical of $B$,
	\item $\chi: U(F) \to \overline{\Q_\ell}^\times$ is the composition of $\psi$ with the algebraic character $U \to \Ga$ mapping each $X_\alpha$ to $1$.
\end{compactitem}
The automorphisms of $G$ act on $F$-pinnings, thereby act on Whittaker data. Put
\[ \mathfrak{w}' := (U, \chi^{-1}) = \iota_- \mathfrak{w}. \]

Fix $\psi$, $\mathcal{P}$ and the associated Whittaker datum $\mathfrak{w}$ for $G$. Let $\phi \in \Phi(G)$ be a semisimple parameter. Define the Genestier--Lafforgue packet $\Pi_\phi$ as in \S\ref{sec:local-statement}. We say that \emph{Shahidi's property} holds for $\Pi_\phi$ and $\mathfrak{w}$, if
\begin{equation}\label{eqn:Shahidi}
	\exists! \pi \in \Pi_\phi \quad \text{such that}\; \pi \text{ is $\mathfrak{w}$-generic}.
\end{equation}
Further discussions about this property will be given in Remark \ref{rem:Shahidi}.

\begin{lemma}\label{prop:equiv-genericity}
	The following are equivalent for an irreducible smooth representation $\pi$ of $G(F)$:
	\begin{inparaenum}[(i)]
		\item $\pi$ is $\mathfrak{w}$-generic;
		\item $\pi \circ \theta$ is $\mathfrak{w}$-generic;
		\item $\check{\pi}$ is $\mathfrak{w}'$-generic;
		\item $\pi \circ \iota_-$ is $\mathfrak{w}'$-generic.
	\end{inparaenum}
\end{lemma}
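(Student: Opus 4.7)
\emph{Plan.}
The strategy is to derive (i) $\Leftrightarrow$ (ii) and (i) $\Leftrightarrow$ (iv) by computing how the automorphisms $\theta$ and $\iota_-$ act on the Whittaker datum, and then to deduce (i) $\Leftrightarrow$ (iii) from the classical Whittaker-model duality under contragredient. The central observation is a twist-by-automorphism principle: for any $F$-rational automorphism $\sigma$ of $G$ normalizing $U$ and any character $\chi''$ of $U(F)$, substituting $u' = \sigma(u)$ in the defining identity $\ell(\pi(\sigma(u))v) = \chi''(u)\ell(v)$ shows that $\pi\circ\sigma$ is $(U,\chi'')$-generic iff $\pi$ is $(U,\chi''\circ\sigma^{-1})$-generic. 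It therefore suffices to identify $\chi\circ\theta$ and $\chi\circ\iota_-$.

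Since $\theta$ is a pinned automorphism, it preserves $B$ and $T$ and permutes the simple root vectors by $X_\alpha \mapsto X_{-w_0(\alpha)}$; because $-w_0$ is a permutation of $\Delta_0$, the algebraic homomorphism $\lambda\colon U\to\Ga$ determined by $\lambda(X_\alpha)=1$ satisfies $\lambda\circ\theta=\lambda$, so $\chi\circ\theta=\chi$. Together with $\theta^2=\identity$, this yields (i) $\Leftrightarrow$ (ii). For $\iota_-$, the defining property $\Ad(\kappa)X_\alpha=-X_\alpha$ forces $\lambda\circ\iota_-=-\lambda$, so $\chi\circ\iota_-=\chi^{-1}$; hence $\iota_-$ exchanges $\mathfrak{w}$ and $\mathfrak{w}'$, and since $\iota_-^2=\identity$ we obtain (i) $\Leftrightarrow$ (iv).

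For the remaining equivalence (i) $\Leftrightarrow$ (iii), I appeal to the classical duality of Whittaker models: for any irreducible admissible smooth representation $\pi$, the space of $(U,\chi)$-Whittaker functionals on $\pi$ is non-zero iff the space of $(U,\chi^{-1})$-Whittaker functionals on $\check\pi$ is non-zero. This can be established via the twisted Jacquet module (or a direct construction of a dual Whittaker functional on the Whittaker model), and it transfers from the complex case to $\overline{\Q_\ell}$ through any abstract field isomorphism $\overline{\Q_\ell}\simeq\CC$. This is the only non-formal ingredient: the potential shortcut $\check\pi \simeq \pi\circ\iota_{G,\mathcal{P}}$ would give (iii) $\Leftrightarrow$ (iv) at once, but it is precisely the conclusion of the duality-involution theorem that this lemma is preparing, so it must be avoided. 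Accordingly, locating a citable form of the classical Whittaker duality valid for arbitrary quasisplit reductive $p$-adic groups is the main (mild) obstacle; everything else is pure bookkeeping with the automorphism principle.
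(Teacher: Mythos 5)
Your proof is correct and takes essentially the same route as the paper: $\theta$ preserves $\mathcal{P}$ giving (i) $\Leftrightarrow$ (ii), transport of structure by $\iota_-$ gives (i) $\Leftrightarrow$ (iv), and the classical Whittaker duality under contragredient gives (i) $\Leftrightarrow$ (iii). The citable form of that duality which you flag as the one non-formal step is supplied in the paper by \cite[\S 4, Lemma 2]{Pra18}.
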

\begin{proof}
	(i) $\iff$ (ii) since $\theta$ preserves $\mathcal{P}$. (i) $\iff$ (iii) is \cite[\S 4, Lemma 2]{Pra18}. (i) $\iff$ (iv) follows from transport of structure by the involution $\iota_-$.
\end{proof}

The following result serves as a partial heuristic for Prasad's \cite[\S 3, Conjecture 1]{Pra18}.
\begin{theorem}\label{prop:duality-involution}
	Define the Whittaker data $\mathfrak{w}$ and $\mathfrak{w}'$ as above. Let $\phi \in \Phi(G)$ be a semisimple parameter such that $\Pi_\phi$ satisfies Shahidi's property \eqref{eqn:Shahidi} with respect to $\mathfrak{w}$. Then the following hold.
	\begin{enumerate}[(i)]
		\item The packet $\Pi_{\Lgrp{\theta} \circ \phi}$ satisfies Shahidi's property \eqref{eqn:Shahidi} with respect to $\mathfrak{w}'$.
		\item Let $\pi$ be the unique $\mathfrak{w}$-generic member of $\Pi_\phi$, then $\check{\pi}$ is the unique $\mathfrak{w}'$-generic member of $\Pi_{\Lgrp{\theta} \circ \phi}$.
		\item If $\pi \in \Pi_\phi$ is $\mathfrak{w}$-generic, then $\check{\pi} \simeq \pi \circ \iota_{G, \mathcal{P}}$.
	\end{enumerate}
\end{theorem}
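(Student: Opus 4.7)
The proof should fall out quickly once we assemble the three ingredients already established in the excerpt: Theorem \ref{prop:local-contragredient} on contragredients, Lemma \ref{prop:equiv-genericity} on the compatibility of genericity with $\theta$, $\iota_-$ and the contragredient, and the local ``trivial functoriality'' of Genestier--Lafforgue \cite[Théorème 8.1]{GL17} coupled with Lemma \ref{prop:dual-Chevalley}. The plan is to first settle (i) and (ii) by transporting the Shahidi property across the bijection $\pi \mapsto \check{\pi}$, and then derive (iii) by showing that $\pi \circ \iota_{G, \mathcal{P}}$ lands in the same packet as $\check{\pi}$ and is $\mathfrak{w}'$-generic, so that uniqueness forces them to agree.

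For (i) and (ii): Theorem \ref{prop:local-contragredient} gives a bijection
\[ \Pi_\phi \xrightarrow{\;\sim\;} \Pi_{\Lgrp{\theta} \circ \phi}, \qquad \pi \mapsto \check{\pi}. \]
By Lemma \ref{prop:equiv-genericity} (i) $\iff$ (iii), this bijection sends $\mathfrak{w}$-generic representations to $\mathfrak{w}'$-generic ones, and vice versa (using $\check{\check{\pi}} \simeq \pi$ and that taking contragredient swaps $\mathfrak{w} \leftrightarrow \mathfrak{w}'$). Hence the Shahidi property \eqref{eqn:Shahidi} for $(\Pi_\phi, \mathfrak{w})$ transfers verbatim to $(\Pi_{\Lgrp{\theta} \circ \phi}, \mathfrak{w}')$, and if $\pi$ is the unique $\mathfrak{w}$-generic member of $\Pi_\phi$, then $\check{\pi}$ is the unique $\mathfrak{w}'$-generic member of $\Pi_{\Lgrp{\theta} \circ \phi}$.

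For (iii): The automorphism $\iota_{G, \mathcal{P}}$ of the algebraic group $G$ has, by Lemma \ref{prop:dual-Chevalley}, the Chevalley involution $\theta$ on $\hat{G}$ as its dual automorphism; extending trivially across $\Gal(\tilde{F}|F)$ yields $\Lgrp{\theta} : \Lgrp{G} \to \Lgrp{G}$. Applying the local trivial functoriality of Genestier--Lafforgue to $\iota_{G, \mathcal{P}}$ shows that $\pi \circ \iota_{G, \mathcal{P}}$ has parameter $\Lgrp{\theta} \circ \phi$, i.e.\ $\pi \circ \iota_{G, \mathcal{P}} \in \Pi_{\Lgrp{\theta} \circ \phi}$. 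Next, write $\iota_{G, \mathcal{P}} = \theta \circ \iota_- = \iota_- \circ \theta$; if $\pi$ is $\mathfrak{w}$-generic, then so is $\pi \circ \theta$ by Lemma \ref{prop:equiv-genericity} (i) $\iff$ (ii), and hence $(\pi \circ \theta) \circ \iota_- = \pi \circ \iota_{G, \mathcal{P}}$ is $\mathfrak{w}'$-generic by (i) $\iff$ (iv) applied to $\pi \circ \theta$. By part (i) already proved, the $\mathfrak{w}'$-generic member of $\Pi_{\Lgrp{\theta} \circ \phi}$ is unique, so $\pi \circ \iota_{G, \mathcal{P}} \simeq \check{\pi}$.

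I do not anticipate a serious obstacle: once the dictionary Chevalley involution $\leftrightarrow$ duality involution (Lemma \ref{prop:dual-Chevalley}) is combined with Theorem \ref{prop:local-contragredient} and the symmetries of Lemma \ref{prop:equiv-genericity}, the argument is essentially formal. The only point that demands care is the bookkeeping of which Whittaker datum ($\mathfrak{w}$ vs.\ $\mathfrak{w}'$) is generic for which representation at each step; in particular, one must not conflate the decomposition $\iota_{G, \mathcal{P}} = \theta \iota_- = \iota_- \theta$ with the assertion that $\iota_-$ acts trivially on Whittaker data, since $\iota_-$ is precisely what swaps $\mathfrak{w}$ and $\mathfrak{w}'$. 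Provided this tracking is done accurately, (i)--(iii) all follow.
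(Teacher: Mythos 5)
Your proof is correct and follows essentially the same route as the paper: (i)--(ii) via Theorem \ref{prop:local-contragredient} combined with the equivalence (i) $\iff$ (iii) of Lemma \ref{prop:equiv-genericity}, and (iii) by using Lemma \ref{prop:dual-Chevalley} together with trivial functoriality from \cite{GL17} to place $\pi \circ \iota_{G,\mathcal{P}}$ in $\Pi_{\Lgrp{\theta} \circ \phi}$, verifying its $\mathfrak{w}'$-genericity via the decomposition $\iota_{G,\mathcal{P}} = \theta \iota_-$ and the equivalences (i) $\iff$ (ii) $\iff$ (iv), and then invoking the uniqueness from (ii). No gaps.
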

\begin{proof}
	Parts (i), (ii) follow immediately from Lemma \ref{prop:equiv-genericity} and Theorem \ref{prop:local-contragredient}, which says that $\Pi_{\Lgrp{\theta} \circ \phi} = \left\{ \check{\pi} : \pi \in \Pi_\phi \right\}$.
	
	Now consider (iii). We claim that $\pi \circ \iota_{G, \mathcal{P}} \in \Pi_{\Lgrp{\theta} \circ \phi}$. In view of Lemma \ref{prop:dual-Chevalley}, the corresponding statement for global Langlands parameterization of cuspidal automorphic representations follows from the ``trivial functoriality'' (under the dual of $\iota_{G, \mathcal{P}}$) in \cite[Théorème 0.1 et 8.1]{GL17}.

	Using Lemma \ref{prop:equiv-genericity}, we see that $\pi \circ \iota_{G, \mathcal{P}} = (\pi \circ \theta) \circ \iota_-$ is also a $\mathfrak{w}'$-generic member of $\Pi_{\Lgrp{\theta} \circ \phi}$. It follows from (ii) that $\check{\pi} \simeq \pi \circ \iota_{G, \mathcal{P}}$.
\end{proof}

\begin{remark}\label{rem:Shahidi}
	Choose an isomorphism $\overline{\Q_\ell} \rightiso \CC$ and let $\phi \in \Phi(G)$ be semisimple. By a conjecture of Shahidi \cite[Conjecture 9.4]{Sh90}, one expects that when $\phi$ is a tempered $L$-parameter, \eqref{eqn:Shahidi} will hold for the \emph{authentic $L$-packet} associated to $\Pi_\phi$ and for any $\mathfrak{w}$.
	
	On the other hand, \cite[Conjecture 2.6]{GP92} proposes a characterization of $L$-parameters satisfying \eqref{eqn:Shahidi}. It is stated in terms of adjoint $L$-factors, thus applies directly to the $\ell$-adic case. The author is grateful to Yeansu Kim for this comment.

	Because of the semi-simplified nature of our packet $\Pi_\phi$, see Remark \ref{rem:GL-semisimplified}, we expect \eqref{eqn:Shahidi} to hold only when $\phi$ is not the semi-simplification of any other $L$-parameter. This occurs when $\phi$ is elliptic, in which case every $\pi \in \Pi_\phi$ is supercuspidal: otherwise the compatibility of the parameterization $\pi \leadsto \phi$ with cuspidal supports will force $\phi$ to factor through some proper Levi. It is believed that the authentic $L$-packets for elliptic $\phi$ have the same property. Many constructions of such $L$-packets have been proposed, such as in \cite{Kal16}. Nonetheless, the precise relation of these packets to the Langlands parameterization of Genestier--Lafforgue \cite{GL17} remains to be settled.
\end{remark}

\begin{remark}
	As shown in \cite{Pra18}, up to $G(F)$-conjugacy, $\iota_{G, \mathcal{P}}$ reduces to the well-known MVW involution when $G$ is classical; it reduces to $g \mapsto {}^t g^{-1}$ when $G = \GL(n)$. According to \cite[\S 3, Corollary 1]{Pra18}, when $Z_G$ is an elementary $2$-group, $\iota_{G, \mathcal{P}}$ is independent of $\mathcal{P}$ up to $G(F)$-conjugacy.
\end{remark}

\section{Overview of the global Langlands parameterization}\label{sec:overview}
\subsection{Geometric setup}\label{sec:geometric-setup}
Fix some power $q$ of a prime number $p$. Take $E \subset \overline{\Q_\ell}$ to be a finite extension of $\Q_\ell$ containing a square root $q^{1/2}$ of $q$, which we fix once and for all. The sheaves and complexes under consideration will be $E$-linear.

Suppose that $S$ is a smooth $\F_q$-scheme of finite type and of pure dimension $d$. For any reasonable algebraic stack $\mathcal{X}$ equipped with a morphism $\mathfrak{p}: \mathcal{X} \to S$, define the \emph{normalized perverse sheaves} on $\mathcal{X}$ with respect to $S$ to be of the form
\[ \mathcal{F}[-d]\left( -\frac{d}{2} \right), \quad \mathcal{F}: \text{non-normalized perverse sheaf}. \]
The usual operations on constructible complexes continue to hold in the normalized setting, with the proviso that the dualizing complex in \cite[\S 7.3]{LO2} becomes
\[ \Omega_{\mathcal{X}} := \left( \text{the non-normalized one} \right)[-2d](-d) \simeq \mathfrak{p}^! \left( E_S \right) \]
and the duality operator becomes $\VerD = \iHom(-, \Omega_{\mathcal{X}})$ accordingly. This formalism extends to ind-stacks, etc.\ with a morphism to $S$. When $S = \Spec \F_q$, we revert to the usual definitions.

Next, assume $\mathring{F} = \F_q(X)$ is a global field and $G$ is a connected reductive $\mathring{F}$-group with a chosen Bruhat--Tits model over $X$, as in \S\ref{sec:cusp-forms}. Fix a maximal $\mathring{F}$-torus $T \subset G$. Also recall that $\hat{G}$ carries a Galois-stable pinning $(\hat{B}, \hat{T}, (X_\alpha)_\alpha)$. Enlarging $E$ if necessary, we can assume that:
\begin{center}
	All irreducible $\overline{\Q_\ell}$-representations of $\Lgrp{G}$ are realized over $E$.
\end{center}

Fix a partition of a finite set
\[ I = I_1 \sqcup \cdots \sqcup I_k \]
used to label points on $X$ and a level $N \subset X$. Set $\hat{N} = |N| \cup (X \smallsetminus U)$ as in \eqref{eqn:hat-N}. Define the \emph{Hecke stack} $\Hecke^{(I_1, \ldots, I_k)}_{N,I}$ that maps each $\F_q$-scheme $S$ to the groupoid
\begin{equation}\label{eqn:Hecke-stack} \Hecke^{(I_1, \ldots, I_k)}_{N,I}(S) = \left\{
	\begin{array}{c|l}
		(x_i)_i \in (X \smallsetminus \hat{N})(S)^I & \phi_j: \text{defined off } \bigcup_{i \in I_j} \Gamma_{x_i} \\
		\left( (\mathcal{G}_j, \psi_j) \in \Bun_{G,N}(S) \right)_{j=0}^k & \psi_j \phi_j|_{N \times S} = \psi_{j-1} \\
		\phi_j: \mathcal{G}_{j-1} \dashrightarrow \mathcal{G}_j & \forall j = 1, \ldots, k
\end{array}\right\} \end{equation}
where $\Gamma_{x_i}$ stands for the graph of $x_i: S \to X$. The points $(x_i)_{i \in I}$ are known as the ``paws''.

The reason for partitioning $I$ into $I_1, \ldots, I_k$ is to define \emph{partial Frobenius morphisms}, see \S\ref{sec:partial-Frobenius}.

The ind-scheme $\Gra^{(I_1, \ldots, I_k)}_I$, the factorization version of affine Grassmannian of Beilinson--Drinfeld, is the space classifying the same data \eqref{eqn:Hecke-stack} as $\Hecke^{(I_1, \ldots, I_k)}_{I, \emptyset}$ together with a trivialization $\theta$ of $\mathcal{G}_k$. It also admits a morphism of ``paws'' to $X^I$. In fact $\Gra^{(I_1, \ldots, I_k)}_I$ is ind-projective; we refer to \cite[\S 1]{Laf18} for further details. When $I$ is a singleton and $k=1$, the usual Beilinson--Drinfeld Grassmannian over $X$ is recovered.

The \emph{factorization structure} here means that given a surjection $\zeta: I \to J$, we have the canonical isomorphism below
\begin{gather*}
	U_\zeta := \left\{ (x_i)_{i \in I}: \zeta(a) \neq \zeta(b) \implies x_a \neq x_b \right\} \subset X^I, \quad I'_a := I_a \cap \zeta^{-1}(j), \; \forall j, \\
	\Gra^{(I_1, \ldots, I_k)}_I \dtimes{X^I} U_\zeta \rightiso \prod_{j \in J} \Gra^{(I'_1, \ldots, I'_k)}_{\zeta^{-1}(j)} \quad \text{ over } U_\zeta;
\end{gather*}
see \cite[Remarque 1.9]{Laf18}. The factorization structure is mainly to be employed together with the complexes that are \emph{universally locally acyclic}, hereafter abbreviated as \emph{ULA}, with respect to the base (say $X^I$). This property (cf.\ \cite[\S 3.2]{Ric14} or \cite[\S 5.1]{BG02}) is immensely useful for ``spreading out'' certain properties of complexes from some open subset in the base, see eg.\ \cite[Theorem 3.16]{Ric14}.

Given $(n_i)_i \in \Z_{\geq 0}^I$. Define $\Gamma_{\sum_i n_i x_i} \subset X \times X^I$ as the closed subscheme Zariski--locally defined by $\prod_{i \in I} t_i^{n_i}$, with $t_i$ being a local equation for $x_i$ in $X$, where $(x_i)_{i \in I}$ are the aforementioned ``paws''. Then define
\[ G_{\sum_{i \in I} n_i x_i} := \text{the Weil restriction of}\; G \quad \text{w.r.t.}\quad \Gamma_{\sum_i n_i x_i} \to X^I. \]
One interprets $G_{\sum_i \infty x_i}$ in the same manner by considering formal neighborhoods, but we won't go into the details.

As in the discussion preceding \cite[Proposition 1.10]{Laf18}, there is a notion of $G_{\sum_i \infty x_i}$-action on $\Gra^{(I_1, \ldots, I_k)}_I$, namely by altering the trivialization $\theta$ of $\mathcal{G}_k$ at $\Gamma_{\sum_i \infty x_i}$.


Let $\cate{Perv}_{G_{\sum_i \infty x_i}}\left(\Gra^{(I_1, \ldots, I_k)}_I\right)$ denote the category of $G_{\sum_i \infty x_i}$-equivariant normalized perverse sheaves on the ind-scheme $\Gra^{(I_1, \ldots, I_k)}_I$ relative to $X^I$; for non-split $G$, we confine ourselves to $(X \smallsetminus \hat{N})^I$ as in \cite[\S 12.3.1]{Laf18}. The factorization version of \emph{geometric Satake equivalence} \cite[Théorèmes 1.17 + 12.16]{Laf18} gives an additive functor
\begin{align*}
	\cate{Rep}_E((\Lgrp{G})^I) & \longrightarrow \cate{Perv}_{G_{\sum_i \infty x_i}}\left(\Gra^{(I_1, \ldots, I_k)}_I\right) \\
	W & \longmapsto \EuScript{S}^{(I_1, \ldots, I_k)}_{I,W,E}.
\end{align*}
For later reference, we record some of the basic properties of this functor, all of which can be found in \textit{loc.\ cit.}
\begin{enumerate}
	\item The normalized perverse sheaves $\EuScript{S}^{(I_1, \ldots, I_k)}_{I,W,E}$ are ULA relative to the morphism to $X^I$ (or $(X \smallsetminus \hat{N})^I$).
	\item When $|I| = 1$, the geometric Satake equivalence \cite{Ric14, Zh15} yields $W \mapsto \EuScript{S}^{(I)}_{I, W, E}$. This extends to general $I$ and ``factorizable'' $W$ using the factorization structure on affine Grassmannians, cf.\ \cite{Laf18}. Namely, for any family $(W_i)_{i \in I}$ of objects in $\cate{Rep}_E(\Lgrp{G})$, one can associate $\EuScript{S}^{(I_1, \ldots, I_k)}_{I, \boxtimes_i W_i, E}$ in $\cate{Perv}_{G_{\sum_i \infty x_i}}\left(\Gra^{(I_1, \ldots, I_k)}_I\right)$.
	\item Write $\Lgrp{G}^I$ for $(\Lgrp{G})^I$. In order to obtain a functorial construction in all $W \in \cate{Rep}_E(\Lgrp{G}^I)$, we take the $\Lgrp{G}^I \times \Lgrp{G}^I$-representation $\EuScript{R} := \boxtimes_{i \in I} \mathscr{O}(\Lgrp{G})$ over $E$. This becomes an ind-object of $\cate{Rep}_E(\Lgrp{G}^I)$ using the $\Lgrp{G}^I$-action on the first slot, and this ind-object carries a $\Lgrp{G}^I$-action from the second slot. Take a system of representatives of irreducible objects $V \in \cate{Rep}_E(\Lgrp{G})$. As $\Lgrp{G}^I \times \Lgrp{G}^I$-representations, we have
	\[ \bigoplus_{V: \text{irred}} V \dotimes{E} V^\vee \rightiso \EuScript{R} \quad \text{by taking matrix coefficients}. \]
	The decomposition above and the available $\EuScript{S}^{(I_1, \ldots, I_k)}_{I, V, E}$ define a normalized ind-perverse sheaf $\EuScript{S}^{(I_1, \ldots, I_k)}_{I, \EuScript{R}, E}$, with the $\hat{G}^I$-action inherited from the second slot of $\EuScript{R}$. Now we can define, for each $W \in \cate{Rep}_E(\Lgrp{G}^I)$,
	\begin{equation}\begin{aligned}
		\EuScript{S}^{(I_1, \ldots, I_k)}_{I,W,E} & := \left( \EuScript{S}^{(I_1, \ldots, I_k)}_{I, \EuScript{R}, E} \dotimes{E} W \right)^{\Lgrp{G}^I} \\
		& \simeq \bigoplus_{V: \text{irred}} \EuScript{S}^{(I_1, \ldots, I_k)}_{I, V, E} \dotimes{E} \left( V^\vee \dotimes{E} W \right)^{\Lgrp{G}^I} \simeq \bigoplus_{V: \text{irred}} \EuScript{S}^{(I_1, \ldots, I_k)}_{I, V, E} \dotimes{E} \mathfrak{W}_V,
	\end{aligned}\end{equation}
	where
	\begin{inparaenum}[(a)]
		\item $W$ is viewed as a constant sheaf on $\Gra^{(I_1, \ldots, I_k)}_I$,
		\item $\Lgrp{G}^I$ acts diagonally, and
		\item $\mathfrak{W}_V$ stands for the multiplicity space of $V$ in $W$.
	\end{inparaenum}
	Functoriality in $W$ is clear, and it is readily seen to agree with the previous step if $W = V$, up to isomorphism.

\end{enumerate}

Given $W \in \cate{Rep}_E(\Lgrp{G}^I)$, we define the reduced closed subscheme
\[ \Gra^{(I_1, \ldots, I_k)}_{I,W} := \Supp \EuScript{S}^{(I_1, \ldots, I_k)}_{I,W,E} \; \subset \Gra^{(I_1, \ldots, I_k)}_I . \]
In this manner, the objects of $\cate{Rep}_E(\Lgrp{G}^I)$ will serve as truncation parameters for $\Gra^{(I_1, \ldots, I_k)}_{I, W}$. For the traditional definition in terms of weights and relative positions, see \cite[Définition 1.12]{Laf18}.

When $|I|=1$ and $W$ is irreducible, $\EuScript{S}^{(I)}_{I, W, E}$ is well-known to be isomorphic to the normalized IC-complex of the stratum $\Gra^{(I)}_{I, W}$.

We move to the \emph{moduli stack of chtoucas} with level structures, whose the details can be found in \cite[\S 2, \S 12.3.2]{Laf18}. For $I = I_1 \sqcup \cdots \sqcup I_k$ and $N$ as before, $\Cht^{(I_1, \ldots, I_k)}_{N,I}$ is defined by a pull-back diagram
\[\begin{tikzcd}
	\Cht^{(I_1, \ldots, I_k)}_{N,I} \arrow[r] \arrow[d] \arrow[phantom, rd, "\Box" description] & \Hecke^{(I_1, \ldots, I_k)}_{N,I} \arrow[d, "{(\mathcal{G}_0, \mathcal{G}_k)}"] \\
	\Bun_{G,N} \arrow[r, "{ \identity \times \Frob }"'] & \Bun_{G,N} \times \Bun_{G,N}
\end{tikzcd}\]
of ind-stacks over $\F_q$. It classifies the chains
\[ (\mathcal{G}_0, \psi_0) \stackrel{\phi_1}{\dashrightarrow} \cdots \stackrel{\phi_{k-1}}{\dashrightarrow} (\mathcal{G}_k, \psi_k) \stackrel{\phi_k}{\dashrightarrow} ({}^\tau \mathcal{G}_0, {}^\tau \psi_0) \]
of $G$-torsors with $N$-level structures (cf.\ \eqref{eqn:Hecke-stack}). Here, for every $\F_q$-scheme $S$ and $(\mathcal{G}, \psi) \in \Bun_{G,N}(S)$ we set
\[ ({}^\tau \mathcal{G}, {}^\tau \psi) := (\identity_X \times \Frob_S)^* (\mathcal{G}, \psi) \]
and similarly for the morphisms in $\Bun_{G,N}(S)$. Note that $\Cht^{(I_1, \ldots, I_k)}_{N,I}$ is an ind-stack of ind-finite type over $\F_q$ endowed with a morphism of ``paws''
\[ \mathfrak{p}^{(I_1, \ldots, I_k)}_{N,I}: \Cht^{(I_1, \ldots, I_k)}_{N,I} \to (X \smallsetminus \hat{N})^I \]
coming from that of $\Hecke^{(I_1, \ldots, I_k)}_{N,I}$. Stability conditions of Harder--Narasimhan type attached to dominant coweights $\mu \in X_*(T^{\mathrm{ad}})$ of $G^{\mathrm{ad}}$ on the datum $\mathcal{G}_0$ gives rise to the truncated piece $\Cht^{(I_1, \ldots, I_k), \leq \mu}_{N,I}$. Choose any Borel subgroup (over the separable closure) of $G$ containing $T$. For coweights $\mu, \mu'$, write
\[ \mu' \geq \mu \iff \mu' - \mu \in \sum_{\check{\alpha}: \text{simple coroot}} \Q_{\geq 0} \cdot \check{\alpha}. \]
As $\mu$ grows with respect to $\geq$, we have the filtered limit
\[ \Cht^{(I_1, \ldots, I_k)}_{N,I} = \varinjlim_\mu \Cht^{(I_1, \ldots, I_k), \leq \mu}_{N,I}. \]
Exactly as in the case of affine Grassmannians, there is another truncation indexed by $W \in \cate{Rep}_E((\Lgrp{G})^I)$; see \cite[\S 2]{Laf18} for details. They give rise to
\[ \Cht^{(I_1, \ldots, I_k)}_{N,I,W} \stackrel{\text{open}}{\supset} \Cht^{(I_1, \ldots, I_k), \leq \mu}_{N,I,W}. \]

By \cite[Proposition 2.6]{Laf18}, $\Cht^{(I_1, \ldots, I_k)}_{N,I,W}$ is a reduced Deligne--Mumford stack locally of finite type over $(X \smallsetminus \hat{N})^I$, for any $W$. The connected components of an open substack of the form $\Cht^{(I_1, \ldots, I_k), \leq \mu}_{N,I,W}$ are quotients of quasi-projective $(X \smallsetminus \hat{N})^I$-schemes by finite groups; when $N$ is large relative to $\mu$ and to the highest weigts of $W$, those connected components are even quasi-projective $(X \smallsetminus \hat{N})^I$-schemes. The last property can serve to justify some geometric reasoning over such stacks, by reducing them to the usual scheme-theoretic setting.


We have $Z_G(\mathring{F}) \backslash Z_G(\A) \hookrightarrow \Bun_{Z_G, N}(\F_q)$, and the latter acts on $\Cht^{(I_1, \ldots, I_k)}_{N,I}$ by twisting $G$-torsors by $Z_G$-torsors. This action leaves each truncated piece invariant. In particular, for a lattice $\Xi \subset Z_G(\mathring{F}) \backslash Z_G(\A)$ chosen as in \S\ref{sec:cusp-forms}, we have $\Xi$-action on $\Cht^{(I_1, \ldots, I_k), \leq \mu}_{N,I,W}$, etc. One can shrink $\Xi$ to make it act freely, and consider the quotients $\Cht^{(I_1, \ldots, I_k), \leq \mu}_{N,I,W}/\Xi$, etc.

By the discussions before \cite[Définition 2.14]{Laf18}, $\Cht^{(I_1, \ldots, I_k), \leq\mu}_{N,I,W}/\Xi$ is a Deligne--Mumford stack of finite type.

\subsection{Cohomologies}\label{sec:local-models}
We keep the notation from \S\ref{sec:geometric-setup}. In what follows, normalization of perverse sheaves will always be with respect to the base $(X \smallsetminus \hat{N})^I$.

The first ingredient \cite[Proposition 2.8]{Laf18} is a canonical smooth morphism
\[ \epsilon^{(I_1, \ldots, I_k)}_{(I),W,\underline{n}}: \Cht^{(I_1, \ldots, I_k)}_{N,I,W} \to \Gra^{(I_1, \ldots, I_k)}_{I,W} / G_{\sum_{i \in I} n_i x_i} \]
where $\underline{n} = (n_i)_{i \in I} \in \Z_{\geq 0}^I$ is sufficiently positive with respect to $W \in \cate{Rep}_E((\Lgrp{G})^I)$, so that the $G_{\sum_i \infty x_i}$-action factors through $G_{\sum_i n_i x_i}$. Assume furthermore that $W = \boxtimes_{j=1}^k W_j$ where each $W_j \in \cate{Rep}_E((\Lgrp{G})^{I_j})$ is irreducible. In \cite[(2.5)]{Laf18} is constructed the canonical smooth morphism
\[ \epsilon^{(I_1, \ldots, I_k)}_{(I_1, \ldots, I_k), W, \underline{n}}: \Cht^{(I_1, \ldots, I_k)}_{N,I,W} \to \prod_{j=1}^k \Gra^{(I_j)}_{I_j, W_j} / G_{\sum_{i \in I_j} n_i x_i}. \]
These two are related by the canonical smooth morphism \cite[(1.12)]{Laf18}
\[ \kappa^{(I_1, \ldots, I_k)}_{I, W}: \Gra^{(I_1, \ldots, I_k)}_{I, W} \to \prod_{j=1}^k \Gra^{(I_j)}_{I_j, W_j} / G_{\sum_{i \in I_j} n_i x_i} \]
that chops a chain $\mathcal{G}_0 \dashrightarrow \mathcal{G}_1 \dashrightarrow \cdots \dashrightarrow \mathcal{G}_k$ (the trivialization forgotten) classified by $\Gra^{(I_1, \ldots, I_k)}_I$ into segments indexed by $I_j$. By \cite[(1.13)]{Laf18}, when $m_i \gg n_i$ it factorizes through a smooth
\[ \tilde{\kappa}^{(I_1, \ldots, I_k)}_{I, W}: \Gra^{(I_1, \ldots, I_k)}_{I, W} / G_{\sum_{i \in I} m_i x_i} \to \prod_{j=1}^k \Gra^{(I_j)}_{I_j, W_j} / G_{\sum_{i \in I_j} n_i x_i}. \]

For an interesting result on \emph{local models} of $\Cht^{(I_1, \ldots, I_k)}_{N,I,W}$ based on these morphisms, see \cite[Proposition 2.11]{Laf18}. However, we do not need that result in this article.


As an application, for each $W \in \cate{Rep}_E((\Lgrp{G})^I)$ we take the normalized perverse sheaf $\EuScript{S}^{(I_1, \ldots, I_k)}_{I,W,E}$ on $\Gra^{(I_1, \ldots, I_k)}_{I,W}$. Descend this complex to $\Gra^{(I_1, \ldots, I_k)}_{I,W} / G_{\sum_{i \in I} n_i x_i}$ by its equivariance given by geometric Satake. Hence on can form the complex $(\epsilon^{(I_1, \ldots, I_k)}_{I,W,\underline{n}})^* \EuScript{S}^{(I_1, \ldots, I_k)}_{I,W,E}$.

Since $\EuScript{S}^{(I_1, \ldots, I_k)}_{I,W,E}$ is ULA with respect to $(X \smallsetminus \hat{N})^I$, so is its inverse image via the smooth morphism $\epsilon^{(I_1, \ldots, I_k)}_{I,W,\underline{n}}$; see \cite[5.1.2, item 2]{BG02}. We claim that the complex $(\epsilon^{(I_1, \ldots, I_k)}_{I,W,\underline{n}})^* \EuScript{S}^{(I_1, \ldots, I_k)}_{I,W,E}$ is moreover normalized perverse on $\Cht^{(I_1, \ldots, I_k)}_{N,I,W}$ for irreducible $W = \boxtimes_{j=1}^k W_j$.

Indeed, the claim is a routine consequence of the factorization structure on $\Gra^{(I_1, \ldots, I_k)}_{I,W}$ and the ULA property, smoothness, etc.

This completes our construction when $G$ is semisimple. In general, one has to consider a lattice $\Xi$ as in \S\ref{sec:cusp-forms}. According to \cite[Remarque 1.20]{Laf18}, $\EuScript{S}^{(I_1, \ldots, I_k)}_{I,W,E}$ descends to $\Gra^{(I_1, \ldots, I_k)}_{I,W}/G^\text{ad}_{\sum_i n_i x_i}$. By the discussions after \cite[Définition 2.14]{Laf18}, $\epsilon^{(I_1, \ldots, I_k)}_{I,W,\underline{n}}$ induces
\[ \epsilon^{(I_1, \ldots, I_k), \Xi}_{N,I,W,\underline{n}}: \Cht^{(I_1, \ldots, I_k)}_{N,I,W}/\Xi \to \Gra^{(I_1, \ldots, I_k)}_{I,W}/G^\text{ad}_{\sum_i n_i x_i} \]
which is smooth of relative dimension equal to $\dim G^\text{ad}_{\sum_i n_i x_i}$. We define accordingly
\begin{equation}\label{eqn:F-sheaf}
	\EuScript{F}^{(I_1, \ldots, I_k)}_{N,I,W,\Xi,E} := (\epsilon^{(I_1, \ldots, I_k), \Xi}_{N,I,W,\underline{n}})^*  \EuScript{S}^{(I_1, \ldots, I_k)}_{I,W,E}.
\end{equation}
This is still a normalized perverse sheaf on $\Cht^{(I_1, \ldots, I_k)}_{N,I,W}/\Xi$. In \textit{loc.\ cit.}, one actually deduces that $\EuScript{F}^{(I_1, \ldots, I_k)}_{N,I,W,\Xi,E}$ is isomorphic to the normalized IC-sheaf on $\Cht^{(I_1, \ldots, I_k)}_{N,I,W}/\Xi$.

Thus far we have assumed $W = \boxtimes_{j=1}^k W_j$. A general definition, functorial in arbitrary $W \in \cate{Rep}_E((\Lgrp{G})^I)$, can be crafted by repeating the construction for $W \mapsto \EuScript{S}^{(I_1, \ldots, I_k)}_{I,W,E}$ reviewed in \S\ref{sec:geometric-setup}. The result still takes the form \eqref{eqn:F-sheaf}, except that the right-hand side is now constructed functorially in $W \in \cate{Rep}_E((\Lgrp{G})^I)$; see \cite[\S 4.5]{Laf18}.

Next, introduce the other truncation parameter $\mu$ from \S\ref{sec:geometric-setup}. The morphism of paws induces
\[ \mathfrak{p}^{(I_1, \ldots, I_k), \leq \mu}_{N,I}: \Cht^{(I_1, \ldots, I_k), \leq \mu}_{N,I,W} /\Xi \to (X \smallsetminus \hat{N})^I. \]
Recall that $\Cht^{(I_1, \ldots, I_k), \leq \mu}_{N,I,W}$ is open and $\Xi$-invariant in $\Cht^{(I_1, \ldots, I_k)}_{N,I,W}$. Define
\begin{equation}\label{eqn:H-cplx}\begin{aligned}
	\EuScript{H}^{\leq \mu, E}_{N,I,W} & := \left(\mathfrak{p}^{(I_1, \ldots, I_k), \leq \mu}_{N,I}\right)_! \; \EuScript{F}^{(I_1, \ldots, I_k)}_{N,I,W,\Xi,E}\bigg|_{\Cht^{(I_1, \ldots, I_k), \leq \mu}_{N,I,W}/\Xi}, \\
	\EuScript{H}^{i, \leq \mu, E}_{N,I,W} & := \mathrm{H}^i \,\EuScript{H}^{\leq \mu, E}_{N,I,W}, \qquad i \in \Z;
\end{aligned}\end{equation}
here $\mathrm{H}^i$ is taken with respect to the ordinary $t$-structure on $\cate{D}_c((X \smallsetminus \hat{N})^I, E)$.
\begin{itemize}
	\item By using the forgetful morphisms as in \cite[Construction 2.7 + Corollaire 2.18]{Laf18}, these complexes are seen to be independent of the partition $(I_1, \ldots, I_k)$. The notation in \eqref{eqn:H-cplx} is thus justified.
	\item For $\mu \leq \mu'$, the open immersion $j: \Cht^{(I_1, \ldots, I_k), \leq \mu}_{N,I,W}/\Xi \to \Cht^{(I_1, \ldots, I_k), \leq \mu'}_{N,I,W}/\Xi$ induces a canonical arrow $\EuScript{H}^{\leq \mu, E}_{N,I,W} \to \EuScript{H}^{\leq \mu', E}_{N,I,W}$. This is a standard consequence of the formalism of six operations as $j^* = j^!$.
	\item They also respect the \emph{coalescence} of paws with respect to any map $\zeta: I \to J$. We refer to \cite[Proposition 4.12]{Laf18} for further explanations.
\end{itemize}

Let $I$ be a finite set and $W \in \cate{Rep}_E((\Lgrp{G})^I)$ arbitrary. Denote the generic point of $X$ (resp.\ $X^I$) by $\eta$ (resp.\ $\eta^I$) and choose geometric points over them
\[ \overline{\eta} \to \eta, \quad \overline{\eta^I} \to \eta^I. \]
Let $\Delta: X \to X^I$ be the diagonal embedding. Following \cite[\S 8]{Laf18} or \cite[\S 1.3]{Var07}, we choose an \emph{arrow of specialization}
\[ \mathfrak{sp}: \overline{\eta^I} \to \Delta(\overline{\eta}), \]
i.e.\ a morphism $(X^I)_{(\overline{\eta^I})} \to (X^I)_{(\Delta(\overline{\eta}))}$ or equivalently $\overline{\eta^I} \to (X^I)_{(\Delta(\overline{\eta}))}$, where the subscripts indicate strict Henselizations at the corresponding geometric points. By \cite[Proposition 8.24]{Laf18}, the induced pull-back morphism
\[ \mathfrak{sp}^*: \varinjlim_\mu \EuScript{H}^{0, \leq\mu, E}_{N,I,W} \bigg|_{\Delta(\overline{\eta})} \to \varinjlim_\mu \EuScript{H}^{0, \leq\mu, E}_{N,I,W} \bigg|_{\overline{\eta^I}} \]
between $E$-vector spaces is injective.

Now comes the Hecke action. Let $f \in C_c(K_N \backslash G(\A)/K_N; E)$. According to \cite[Corollaire 6.5]{Laf18}, taking a coweight $\kappa \gg 0$ with respect to $f$, there is an induced morphism
\begin{equation}\label{eqn:Hecke-action}
	T(f): \EuScript{H}^{\leq \mu, E}_{N,I,W} \to \EuScript{H}^{\leq \mu + \kappa, E}_{N,I,W}
\end{equation}
in $\cate{D}^b_c((X \smallsetminus \hat{N})^I, E)$, with various compatibilities. It is $E$-linear in $f$ and satisfies $T(ff') = T(f)T(f')$. After passing to $\varinjlim_\mu$, we are led to the left $C_c(K_N \backslash G(\A)/K_N; E)$-module
\begin{equation}\label{eqn:Hf}
	H_{I,W} := \left( \varinjlim_\mu \EuScript{H}^{0, \leq\mu, E}_{N,I,W} \bigg|_{\Delta(\overline{\eta})} \right)^{\mathrm{Hf}}
\end{equation}
where ``Hf'' means Hecke-finite with respect to the action \eqref{eqn:Hecke-action}. This definition is clearly functorial in $W$. The following properties are established in \cite[\S\S 8---9]{Laf18}.
\begin{itemize}
	\item Compatibility with coalescence of paws. Namely, every map $\zeta: J \to I$ induces a canonical isomorphism $\chi_\zeta: H_{I,W} \rightiso H_{J,W^\zeta}$, where $W^\zeta \in \cate{Rep}_E(\Lgrp{G}^J)$ denotes the pull-back of $W$ via $\zeta$.
	\item The arrow $\mathfrak{sp}^*$ commutes with Hecke action since the latter is defined on the level of $\cate{D}^b_c((X \smallsetminus \hat{N})^I, E)$. Moreover, it induces an isomorphism
	\[ \mathfrak{sp}^*: H_{I,W} \rightiso \left( \varinjlim_\mu \EuScript{H}^{0, \leq\mu, E}_{N,I,W} \bigg|_{\overline{\eta^I}} \right)^{\mathrm{Hf}}. \]
	\item We have $\Lgrp{G}^\emptyset = \{1\}$, $\eta^\emptyset = \Spec\F_q$ when $I = \emptyset$. There are natural isomorphisms
	\begin{equation}\label{eqn:H-vs-Ccusp}
		H_{\{0\}, \mathbf{1}} \stackrel{\chi}{\leftiso} H_{\emptyset, \mathbf{1}} \rightiso \Ccusp\left(\Bun_{G,N}(\F_q)/\Xi; E \right).
	\end{equation}
	The arrow $\chi$ is induced by coalescence via the unique map $\emptyset \to \{0\}$. The rightward arrow stems from the fact \cite[Proposition 2.16 (c)]{Var04} that $\Cht_{N, \emptyset, \mathbf{1}}/\Xi$ is the constant stack $\Bun_{G,N}(\F_q)/\Xi$ over $\Spec\F_q$, which implies a canonical isomorphism
	\begin{equation}\label{eqn:H-vs-Cc}
		\varinjlim_\mu \EuScript{H}^{0, \leq\mu, E}_{N, \emptyset, \mathbf{1}} \bigg|_{\Delta(\overline{\eta})} \rightiso C_c\left(\Bun_{G,N}(\F_q)/\Xi; E \right)
	\end{equation}
	of $C_c(K_N \backslash G(\A)/K_N; E)$-modules.
	\item For $I = \{1\}$, $W = \mathbf{1}$, coalescence induces $\Cht^{(\{0\})}_{N, \{1\}, \mathbf{1}}/\Xi \rightiso (\Cht_{N, \emptyset, \mathbf{1}}/\Xi) \dtimes{\Spec\F_q} (X \smallsetminus \hat{N})$ by \cite[(8.4)]{Laf18}. In this case, $\mathcal{H}^{0, \leq\mu, E}_{N, \{0\}, \mathbf{1}}$ is a constant sheaf and the $\varinjlim_\mu$ of its stalk at $\overline{\eta}$ is still $C_c\left(\Bun_{G,N}(\F_q)/\Xi; E \right)$.

	\item Via these isomorphisms, the $C_c(K_N \backslash G(\A)/K_N; E)$-module structures on $H_{\emptyset, \mathbf{1}}$ and $H_{\{0\}, \mathbf{1}}$ match the one on $\Ccusp\left(\Bun_{G,N}(\F_q)/\Xi; E \right)$ recorded in \S\ref{sec:cusp-forms}. See \cite[\S 8]{Laf18}.
\end{itemize}
The last item above is how harmonic analysis enters the geometric picture.

\subsection{Partial Frobenius morphisms and Galois actions}\label{sec:partial-Frobenius}
We conserve the previous conventions and review the partial Frobenius morphisms. Let $J \subset I$ be finite sets. Choose a partition $I = I_1 \sqcup \cdots \sqcup I_k$ with $I_1 = J$, together with a specialization arrow $\mathfrak{sp}: \overline{\eta^I} \to \Delta(\overline{\eta})$. The choice of partition intervenes in the constructions, but will disappear in the final results.

Let $\Frob_J = \Frob_{I_1}: (X \smallsetminus \hat{N})^I \to (X \smallsetminus \hat{N})^I$ be the morphism that equals $\Frob$ on the coordinates indexed by $I_1$, and $\identity$ elsewhere.

Take $W \in \cate{Rep}_E((\Lgrp{G})^I)$ as well the lattice $\Xi$ as in \S\ref{sec:local-models}. In \cite[\S 3]{Laf18} is defined the partial Frobenius morphism
\begin{equation}\label{eqn:partial-Frob-Cht}
	\Frob^{(I_1, \ldots, I_k)}_{I_1, N}: \Cht^{(I_1, \ldots, I_k)}_{N,I,W} \to \Cht^{(I_2, \ldots, I_k, I_1)}_{N,I,W}
\end{equation}
covering $\Frob_{I_1}$, that respects $\Xi$-actions. In terms of the notations in \S\ref{sec:geometric-setup}, it sends the chain
\[ (\mathcal{G}_0, \psi_0) \stackrel{\phi_1}{\dashrightarrow} \cdots \dashrightarrow (\mathcal{G}_k, \psi_k) \dashrightarrow ({}^\tau \mathcal{G}_0, {}^\tau \psi_0) \]
into
\[ (\mathcal{G}_1, \psi_1) \stackrel{\phi_2}{\dashrightarrow} \cdots \dashrightarrow (\mathcal{G}_k, \psi_k) \dashrightarrow ({}^\tau \mathcal{G}_0, {}^\tau \psi_0) \stackrel{{}^\tau \phi_1}{\dashrightarrow} ({}^\tau \mathcal{G}_1, {}^\tau \psi_1) \]
whereas the paws are transformed accordingly by $\Frob_{I_1}$. The cyclic composition of $k$ partial Frobenius morphism equals the total Frobenius endomorphism of $\Cht^{(I_1, \ldots, I_k)}_{N,I,W}$. An easy consequence is that $\Frob_{I_1}$ is a \emph{universal homeomorphism}; see \cite[Tag 04DC]{stacks-project}.

The induced morphism between the quotients by $\Xi$ is also named $\Frob^{(I_1, \ldots, I_k)}_{I_1, N}$. Now introduce the dominant coweight $\mu$ of $G^\mathrm{ad}$ in \S\ref{sec:local-models} as truncation parameter. A basic fact is that whenever $\mu' \gg \mu$ with respect to $W$, 
\begin{equation}\label{eqn:mu-mu'}
	\left(\Frob^{(I_1, \ldots, I_k)}_{I_1, N}\right)^{-1} \Cht^{(I_2, \ldots, I_k, I_1), \leq \mu}_{N,I,W} \subset \Cht^{(I_1, \ldots, I_k), \leq \mu'}_{N,I,W}.
\end{equation}

When $k=1$, we have the usual Frobenius correspondence $\Phi: \Frob^* \EuScript{S}^{(I)}_{I,W,E} \rightiso \EuScript{S}^{(I)}_{I,W,E}$ between normalized perverse sheaves on $\Gra^{(I)}_{I,W}/G^\mathrm{ad}_{\sum_i n_i x_i}$. In general, by writing
\[ \Frob_{I_1}(x_i)_{i \in I} = (x'_i)_{i \in I}, \quad (x_i)_{i \in I} \in (X \smallsetminus \hat{N})^I(S), \quad \forall S: \F_q\text{-scheme} \]
and supposing $W = \boxtimes_{j=1}^k W_j$ is irreducible, there is a commutative diagram:
\[\begin{tikzcd}[row sep=large]
	\Cht^{(I_1, \ldots, I_k)}_{N,I,W}/\Xi \arrow[r, "{\Frob^{(I_1, \ldots, I_k)}_{I_1, N}}"] \arrow[d, "{\epsilon^{(I_1, \ldots, I_k)}_{N, (I_1, \ldots, I_k), \underline{n}} }"'] & \Cht^{(I_2, \ldots, I_k, I_1)}_{N,I,W}/\Xi \arrow[d, "{\epsilon^{(I_2, \ldots, I_k, I_1)}_{N, (I_2, \ldots, I_k, I_1), \underline{n}} }"] \\
	\prod_{j=1}^k \Gra^{(I_j)}_{I_j, W_j}/G^\mathrm{ad}_{\sum_{i \in I_j} n_i x_i } \arrow[r, "{\Frob \times \identity \times \cdots \times \identity}"' inner sep=1.5em] & \prod_{j=1}^k \Gra^{(I_j)}_{I_j, W_j}/G^\mathrm{ad}_{\sum_{i \in I_j} n_i x'_i }
\end{tikzcd}\]
In view of the constructions in \S\ref{sec:local-models} using the smooth morphisms $\epsilon^{\cdots}_{\cdots}$, the ULA property, etc., we obtain a canonical isomorphism in $\cate{D}^b_c(\Cht^{(I_1, \ldots, I_k)}_{N,I,W}/\Xi, E)$
\begin{equation}\label{eqn:partial-Frob-F}
	F^{(I_1, \ldots, I_k)}_{I_1, N, W}: \left( \Frob^{(I_1, \ldots, I_k)}_{I_1, N} \right)^* \EuScript{F}^{(I_2, \ldots, I_1)}_{N,I,W,\Xi,E} \rightiso \EuScript{F}^{(I_1, \ldots, I_k)}_{N,I,W,\Xi,E}
\end{equation}
extending the previous case $k=1$. Cf.\ \cite[Proposition 3.4]{Laf18}. This isomorphism can be extended functorially to arbitrary $W \in \cate{Rep}_E((\Lgrp{G})^I)$ by repeating the construction for $W \mapsto \EuScript{S}^{(I_1, \ldots, I_k)}_{I,W,E}$.

Abbreviate the $\Frob^{(I_1, \ldots, I_k)}_{I_1, N}$ on $\Cht^{(I_1, \ldots, I_k)}_{N,I,W}/\Xi$ as $a_1$. It fits into the commutative diagram
\begin{equation}\label{eqn:corr-diagram}\begin{tikzcd}
	\Cht^{(I_2, \ldots, I_1), \leq \mu}_{N,I,W}/\Xi \arrow[d, "\mathfrak{p}"] & a_1^{-1}\left( \Cht^{(I_2, \ldots, I_1), \leq \mu}_{N,I,W}/\Xi \right) \arrow[l, "a_1"'] \arrow[hookrightarrow, r, "a_2"] \arrow[d, "\mathfrak{p}"] & \Cht^{(I_1, \ldots, I_k), \leq \mu'}_{N,I,W}/\Xi \arrow[d, "\mathfrak{p}"] \\
	(X \smallsetminus \hat{N})^I & (X \smallsetminus \hat{N})^I \arrow[l, "{\Frob_{I_1}}"] \arrow[equal, r] & (X \smallsetminus \hat{N})^I
\end{tikzcd}\end{equation}
where $\mathfrak{p}$ denotes the self-evident morphisms of paws, $a_1$ is a universal homeomorphism and $a_2$ is an open immersion. Hence \eqref{eqn:partial-Frob-F} affords a \emph{cohomological correspondence} between bounded constructible complexes in the sense of \cite[\S 1]{Var07}: for $a_1$, $a_2$ in \eqref{eqn:corr-diagram},
\begin{equation}\label{eqn:F-a1-a2}
	\Frob^{(I_1, \ldots, I_k)}_{I_1, N, W}: a_1^* \underbracket{ \EuScript{F}^{(I_2, \ldots, I_1)}_{N,I,W,\Xi,E} }_{\text{on the $\leq \mu$ part}} \to a_2^! \underbracket{ \EuScript{F}^{(I_1, \ldots, I_k)}_{N,I,W,\Xi,E} }_{\text{on the $\leq \mu'$ part}} , \quad a_2^* = a_2^! .
\end{equation}

The left square of \eqref{eqn:corr-diagram} is not Cartesian; however, in the commutative diagram defined with Cartesian square
\[\begin{tikzcd}
	& & a_1^{-1}\left( \Cht^{(I_2, \ldots, I_1), \leq \mu}_{N,I,W}/\Xi \right) \arrow[ld, "\exists! \, \varphi"] \arrow[bend right=10, lld, "a_1"'] \arrow[bend left, ldd, "\mathfrak{p}"] \\
	\Cht^{(I_2, \ldots, I_1), \leq \mu}_{N,I,W}/\Xi \arrow[d, "\mathfrak{p}"'] & \Frob_{I_1}^*\left( \Cht^{(I_2, \ldots, I_1), \leq \mu}_{N,I,W}/\Xi \right) \arrow[l, "{\tilde{a}_1}"'] \arrow[d, "\tilde{\mathfrak{p}}"] & \\
	(X \smallsetminus \hat{N})^I \arrow[phantom, ur, "\Box" description] & (X \smallsetminus \hat{N})^I \arrow[l, "{\Frob_{I_1}}"] &
\end{tikzcd}\]
the arrow $\varphi$ is a universal homeomorphism since both $\Frob_{I_1}$ and $a_1$ are. Therefore we obtain
\begin{equation}\label{eqn:BC}
	\mathrm{BC}: \Frob_{I_1}^* \mathfrak{p}_! \xrightarrow[\mathrm{bc}]{\sim} \tilde{\mathfrak{p}}_! \tilde{a}_1^* \leftiso \tilde{\mathfrak{p}}_! \varphi_! \varphi^* \tilde{a}_1^* \simeq \mathfrak{p}_! a_1^*.
\end{equation}
Indeed, $\mathrm{bc}$ is the isomorphism of base change by the universal homeomorphism $\Frob_{I_1}$ \cite[12.2]{LO2}; the second isomorphism is induced by $\varphi_! \varphi^* \rightiso \identity$, which is in turn due to the topological invariance of the étale topos (see \cite[Exp VIII, Théorème 1.1]{SGA4} or \cite[Tag 04DY]{stacks-project}) under the universal homeomorphism $\varphi$.

In view of \eqref{eqn:H-cplx}, we can now define
\begin{equation}\label{eqn:F_J}
	F_J = F_{I_1}: \Frob_{I_1}^* \EuScript{H}^{\leq \mu, E}_{N,I,W} \to \EuScript{H}^{\leq \mu', E}_{N,I,W}
\end{equation}
as the composite in $\cate{D}^b_c((X \smallsetminus \hat{N})^I, E)$
\[\begin{tikzcd}
	\Frob_{I_1}^* \mathfrak{p}_! \underbracket{ \EuScript{F}^{(I_2, \ldots, I_1)}_{N,I,W,\Xi,E} }_{\text{on $\leq \mu$}} \arrow[r, "\sim"', "\mathrm{BC}"] & \mathfrak{p}_! a_1^* \underbracket{ \EuScript{F}^{(I_2, \ldots, I_1)}_{N,I,W,\Xi,E} }_{\text{on $\leq \mu$}} \arrow[r, "{\mathfrak{p}_! \Frob^{(I_1, \ldots, I_k)}_{I_1,N,W}}" inner sep=1em] & \mathfrak{p}_! a_2^! \underbracket{ \EuScript{F}^{(I_1, \ldots, I_k)}_{N,I,W,\Xi,E} }_{\text{on $\leq \mu'$}} \arrow[r] & \mathfrak{p}_! \underbracket{ \EuScript{F}^{(I_1, \ldots, I_k)}_{N,I,W,\Xi,E}}_{\text{on $\leq \mu'$}}
\end{tikzcd}\]
the last arrow arising from $\mathfrak{p}_! a_2^! = \mathfrak{p}_! (a_2)_! a_2^! \xrightarrow{(a_2)_! a_2^! \to \identity} \mathfrak{p}_!$. It is functorial in $W \in \cate{Rep}_E((\Lgrp{G})^I)$ and is shown to be compatible with the coalescence of paws in \cite[\S\S 3---4]{Laf18}. Hence the dependence is only on $J \subset I$.

Consequently, the morphism $F_J$ also acts $E$-linearly on $\varinjlim_\mu \EuScript{H}^{0, \leq\mu, E}_{N,I,W} \bigg|_{\overline{\eta^I}}$. Given any partition $I = I_1 \sqcup \ldots \sqcup I_k$, the actions of $F_{I_1}, \ldots, F_{I_k}$ form a commuting family whose cyclic composition equals the total Frobenius action on $\varinjlim_\mu \EuScript{H}^{0, \leq\mu, E}_{N,I,W} \bigg|_{\overline{\eta^I}}$.

On the other hand, the standard theory \cite[Tag 03QW]{stacks-project} yields a continuous representation of $\pi_1(\eta^I, \overline{\eta^I})$ on $\EuScript{H}^{0, \leq\mu, E}_{N,I,W} \bigg|_{\overline{\eta^I}}$ which passes to $\varinjlim_\mu$.

To conclude this subsection, we recall briefly the following extension of groups
\[ 1 \to \Ker\left[ \pi_1(\eta^I, \overline{\eta^I}) \to \hat{\Z} \right] \to \mathrm{FWeil}(\eta^I, \overline{\eta^I}) \to \Z^I \to 0. \]
We refer to \cite[Remarque 8.18]{Laf18} and the subsequent discussions for all further details. When $|I|=1$, it becomes the Weil group $\Weil{\mathring{F}}$ of $\mathring{F} = \F_q(X)$; in general there is a surjection $\mathrm{FWeil}(\eta^I, \overline{\eta^I}) \twoheadrightarrow \Weil{\mathring{F}}^I$ depending on the choice of $\mathfrak{sp}$. The surjection induces an isomorphism from the pro-finite completion $\mathrm{FWeil}(\eta^I, \overline{\eta^I})$ to that of $\Weil{\mathring{F}}^I$, i.e.\ $\pi_1(\eta, \overline{\eta})^I$. As mentioned in \textit{loc.\ cit.}, the action on $\varinjlim_\mu \EuScript{H}^{0, \leq\mu, E}_{N,I,W} \bigg|_{\overline{\eta^I}}$ of
\begin{compactitem}
	\item the partial Frobenius morphisms $F_J$ for various $J \subset I$, and
	\item that of $\pi_1(\eta^I, \overline{\eta^I})$
\end{compactitem}
meld into an action of $\mathrm{FWeil}(\eta^I, \overline{\eta^I})$. The upshot of the \S 8 of \textit{loc.\ cit.} is to produce a continuous $E$-linear $\pi_1(\eta, \overline{\eta})^I$-action on $\left(\varinjlim_\mu \EuScript{H}^{0, \leq\mu, E}_{N,I,W} \bigg|_{\overline{\eta^I}} \right)^{\text{Hf}}$ therefrom. In other words, one wants to factorize the $\mathrm{FWeil}(\eta^I, \overline{\eta^I})$-action through its pro-finite completion continuously.

The key for the passage to $\pi_1(\eta, \overline{\eta})^I$-action is \emph{Drinfeld's Lemma}. This method requires some finiteness conditions which in turn involve the Eichler--Shimura relations. These important issues are addressed at length in \cite[\S 8]{Laf18}, but they are not needed in this article.

The aforementioned continuous representation transports to $H_{I,W}$, namely
\[ \vec{\gamma} \cdot f := (\mathfrak{sp}^*)^{-1} \left( \vec{\gamma} \cdot (\mathfrak{sp}^* f) \right), \quad \vec{\gamma} \in \pi_1(\eta, \overline{\eta})^I, \; f \in H_{I,W}. \]
This action turns out to be independent of the choice of $\overline{\eta^I}$ and $\mathfrak{sp}$, by \cite[Lemme 9.4]{Laf18}.

\subsection{Excursion operators and pseudo-characters}\label{sec:excursion-operator}
Consider finite sets $I, J$ and $W \in \cate{Rep}_E((\Lgrp{G})^I)$, $U \in \cate{Rep}_E((\Lgrp{G})^J)$. Let $\zeta_I, \zeta_J$ be the unique maps from $I,J$ into the singleton $\{0\}$.
The diagonal action on $W$ gives $W^{\zeta_I} \in \cate{Rep}_E(\Lgrp{G})$; the space of $\hat{G}$-invariants $(W^{\zeta_I})^{\hat{G}}$ is therefore a representation of $\Gal(\widetilde{F}|F)$. Denote by $(W^{\zeta_I})^{\hat{G}} \big|_{X \smallsetminus \hat{N}}$ the $E$-lisse sheaf on $X \smallsetminus \hat{N}$ obtained by descent. Likewise, we have $(W^{\zeta_I})_{\hat{G}} \big|_{X \smallsetminus \hat{N}}$ by taking the maximal quotient of $W^{\zeta_I}$ on which $\hat{G}$ acts trivially. In \cite[(12.18), (12.19)]{Laf18} is constructed a pair of morphisms
\[\begin{tikzcd}[row sep=tiny]
	\EuScript{H}^{\leq \mu, E}_{N,J,U} \boxtimes (W^{\zeta_I})^{\hat{G}} \big|_{X \smallsetminus \hat{N}} \arrow[r] & \EuScript{H}^{\leq \mu, E}_{N, J \sqcup I, U \boxtimes W}\big|_{(X \smallsetminus \hat{N})^J \times \Delta(X \smallsetminus \hat{N})} \\
	\EuScript{H}^{\leq \mu, E}_{N,J,U} \boxtimes (W^{\zeta_I})_{\hat{G}} \big|_{X \smallsetminus \hat{N}} & \EuScript{H}^{\leq \mu, E}_{N, J \sqcup I, U \boxtimes W}\big|_{(X \smallsetminus \hat{N})^J \times \Delta(X \smallsetminus \hat{N})} \arrow[l]
\end{tikzcd}\]
in $\cate{D}^b_c\left((X \smallsetminus \hat{N})^{J \sqcup \{0\}}, E\right)$. Roughly speaking, they are defined via coalescence and the functoriality of $\EuScript{H}$ with respect to $(W^{\zeta_I})^{\hat{G}} \hookrightarrow W^{\zeta_I} \twoheadrightarrow (W^{\zeta_I})_{\hat{G}}$.


Now take $J = \emptyset$ and $U = \mathbf{1}$. Let $x \in W$ (resp.\ $\xi \in W^\vee$) be $\hat{G}$-invariant under the diagonal action, viewed as a map $E \to (W^{\zeta_I})^{\hat{G}}$ (resp. $(W^{\zeta_I})_{\hat{G}} \to E$). Taking $\varinjlim_\mu \mathrm{H}^0(\cdots \big|_{\overline{\eta}})$ yields the creation and annihilation operators (see \cite[Définitions 5.1, 5.2 and 12.3.4]{Laf18})
\[\begin{tikzcd}
	\varinjlim_\mu \EuScript{H}^{0, \leq\mu, E}_{N,\emptyset,\mathbf{1}} \arrow[r, yshift=0.5em, "{\EuScript{C}^\sharp_x}"] & \varinjlim_\mu \EuScript{H}^{0, \leq \mu, E}_{N,I,W}\bigg|_{\Delta(\overline{\eta})} \arrow[l, yshift=-0.5em, "{\EuScript{C}^\flat_\xi}"]
\end{tikzcd}\]
between $E$-vector spaces. Restriction to Hecke-finite parts yields arrows
\[\begin{tikzcd}
	H_{\emptyset, \mathbf{1}} \simeq H_{\{0\}, \mathbf{1}} \arrow[r, yshift=0.5em, "{\EuScript{C}^\sharp_x}"] & H_{I,W} \arrow[l, yshift=-0.5em, "{\EuScript{C}^\flat_\xi}"]
\end{tikzcd}, \qquad \text{cf.\ \eqref{eqn:Hf}.} \]

Given $I, W, x, \xi$ as above and $\vec{\gamma} = (\gamma_i)_{i \in I} \in \pi_1(\eta, \overline{\eta})^I$, the \emph{excursion operator} $S_{I,W,x,\xi,\vec{\gamma}}$ is the composite
\[\begin{tikzcd}[row sep=small]
	H_{\{0\}, \mathbf{1}} \arrow[d, "{\EuScript{C}^\sharp_x}"'] & & & H_{\{0\}, \mathbf{1}} \\
	H_{I,W} \arrow[r, "{\mathfrak{sp}^*}", "\sim"'] & \left( \varinjlim_\mu \EuScript{H}^{0, \leq \mu, E}_{N,I,W} \bigg|_{\overline{\eta^I}} \right)^\text{Hf} \arrow[r, "\vec{\gamma}", "\sim"'] & \left( \varinjlim_\mu \EuScript{H}^{0, \leq \mu, E}_{N,I,W} \bigg|_{\overline{\eta^I}} \right)^\text{Hf} \arrow[r, "{(\mathfrak{sp}^*)^{-1}}", "\sim"'] & H_{I,W} \arrow[u, "{\EuScript{C}^\flat_\xi}"']
\end{tikzcd}\]
Here $\vec{\gamma}$ acts in the manner reviewed in \S\ref{sec:partial-Frobenius}. Upon recalling \eqref{eqn:H-vs-Ccusp}, we obtain
\[ S_{I,W,x,\xi,\vec{\gamma}} \in \End_E\left( H_{\{0\}, \mathbf{1}} \right) \simeq \End_E\left( \Ccusp(\Bun_{G,N}(\F_q)/\Xi; E)\right). \]
Moreover, by \cite[Définition-Proposition 9.1]{Laf18}
\begin{itemize}
	\item we have $S_{I,W,x,\xi,\vec{\gamma}} \in \End_{C_c(K_N \backslash G(\A)/K_N; E)}\left(H_{\{0\}, \mathbf{1}}\right)$;
	\item the formation of $S_{I,W,x,\xi,\vec{\gamma}}$ is $E$-bilinear in $x, \xi$ and continuous in $\vec{\gamma}$ for the topology on the finite-dimensional space $\End_E\left( H_{\{0\}, \mathbf{1}} \right)$ induced by $E$;
	\item let $\mathcal{B}_E$ be the $E$-subalgebra of $\End_{C_c(K_N \backslash G(\A)/K_N; E)}\left(H_{\{0\}, \mathbf{1}}\right)$ generated by $S_{I,W,x,\xi,\vec{\gamma}}$ for all quintuples $(I,W,x,\xi,\vec{\gamma})$. Then $\mathcal{B}_E$ is a finite-dimensional commutative $E$-algebra by \cite[(10.2)]{Laf18}.
\end{itemize}

The foregoing constructions behave well under finite extensions of the field $E$ of coefficients. Define the $\overline{\Q_\ell}$-algebra
\[ \mathcal{B} := \mathcal{B}_E \otimes_E \overline{\Q_\ell} \; \subset \End_{\overline{\Q_\ell}}\left( H_{\{0\}, \mathbf{1}} \otimes_E \overline{\Q_\ell} \right) . \]
Upon enlarging $E$, we may assume that all homomorphisms $\nu: \mathcal{B} \to \overline{\Q_\ell}$ (finitely many) of $\overline{\Q_\ell}$-algebras are defined over $E$. There is a decomposition of $C_c(K_N \backslash G(\A)/K_N; E)$-modules into generalized eigenspaces
\begin{equation}\label{eqn:nu-decomp}
	H_{\{0\}, \mathbf{1}} = \bigoplus_\nu \mathfrak{H}_\nu, \quad \mathfrak{H}_\nu := \left\{ \begin{array}{rl}
		f \in H_{\{0\}, \mathbf{1}} : & \forall T \in \mathcal{B}_E, \; \exists d \geq 1 \\
		& \text{s.t.}\; (T - \nu(T))^d f = 0
	\end{array}\right\}.
\end{equation}
Here $\nu$ ranges over the characters of $\mathcal{B}$, and one may take $d = \dim_E H_{\{0\}, \mathbf{1}}$. The same holds after passing to $\overline{\Q_\ell}$. All in all,
\[ \Ccusp\left( \Bun_{G,N}(\F_q)/\Xi; \overline{\Q_\ell} \right) = \bigoplus_{\nu: \mathcal{B} \to \overline{\Q_\ell}} \mathfrak{H}_\nu \quad \text{in} \quad C_c(K_N \backslash G(\A)/K_N; \overline{\Q_\ell})\dcate{Mod}. \]
It is conjectured that $\mathcal{B}$ is reduced, which will imply that $d=1$ suffices.

The next step is to re-encode the excursion operators $S_{I,W,x,\xi,\vec{\gamma}}$. Let $f(\vec{g}) = \lrangle{\xi, \vec{g} \cdot x}_{W^\vee \otimes W}$ where $\vec{g} \in (\Lgrp{G})^I$ and $\lrangle{\cdot, \cdot}_{W^\vee \otimes W}$ is the duality pairing $W^\vee \otimes_E W \to E$. Then $f \in \mathscr{O}(\hat{G} \bbslash (\Lgrp{G})^I \sslash \hat{G})$, where $\hat{G}$ acts by bilateral translations through diagonal embedding. By \cite[Lemme 10.6]{Laf18}, $S_{I,W,x,\xi,\vec{\gamma}}$ depends only on $(I, f, \vec{\gamma})$. Using some algebraic version of the Peter--Weyl theorem, one can uniquely define the operators
\[ S_{I,f,\vec{\gamma}} \; \in \mathcal{B}_E , \quad f \in \mathscr{O}(\hat{G} \bbslash (\Lgrp{G})^I \sslash \hat{G}), \]
in a manner compatible with the original $S_{I,W,x,\xi,\vec{\gamma}}$, such that if $f$ comes from a function $\Gal(\widetilde{\mathring{F}}|\mathring{F})^I \to E$, then $S_{I,f,\vec{\gamma}} = f(\vec{\gamma}) \cdot \identity$. See \cite[Remarque 12.20]{Laf18} for further explanations.

Take $n \in \Z_{\geq 1}$ and $I := \{0, \ldots, n\}$. Then $\hat{G}$ acts on $(\Lgrp{G})^n$ by simultaneous conjugation. There is a natural map
\begin{equation}\label{eqn:f-tilde} \begin{aligned}
	\mathscr{O}((\Lgrp{G})^n \sslash \hat{G}) & \longrightarrow \mathscr{O}\left(\Lgrp{G} \bbslash (\Lgrp{G})^{\{0, \ldots, n\}} \sslash \hat{G}\right) \subset \mathscr{O}\left(\hat{G} \bbslash (\Lgrp{G})^{\{0, \ldots, n\}} \sslash \hat{G}\right) \\
	f & \longmapsto \left[ \tilde{f}: (g_0, \ldots, g_n) \mapsto f(g_0^{-1} g_1, \ldots, g_0^{-1} g_n) \right].
\end{aligned}\end{equation}

When $n$ is fixed, the operators
\begin{equation}\label{eqn:Theta-operator}
	\Theta_n(f)\left( \vec{\gamma} \right) := S_{\{0, \ldots, n\}, \tilde{f}, (1, \vec{\gamma})}, \quad n \in \Z_{\geq 1}, \; \vec{\gamma} \in \pi_1(\eta, \overline{\eta})^n, \; f \in \mathscr{O}((\Lgrp{G})^n \sslash \hat{G})
\end{equation}
in $\mathcal{B}_E$ afford a homomorphism $\mathscr{O}((\Lgrp{G})^n \sslash \hat{G}) \to C\left( \pi_1(X \smallsetminus \hat{N}, \overline{\eta})^n, \mathcal{B}_E \right)$ between $E$-algebras, where $C(\cdots)$ denotes the algebra of continuous functions under pointwise operations. See \cite[Proposition 10.10]{Laf18} for the passage to $\pi_1(X \smallsetminus \hat{N}, \overline{\eta})$.

Since $\mathscr{O}\left(\Lgrp{G} \bbslash (\Lgrp{G})^{\{0, \ldots, n\}} \sslash \hat{G}\right) \subsetneq \mathscr{O}\left(\hat{G} \bbslash (\Lgrp{G})^{\{0, \ldots, n\}} \sslash \hat{G}\right)$ in general, the map \eqref{eqn:f-tilde} is not always surjective. Nonetheless, the operators $S_{\{0, \ldots, n\}, \tilde{f}, (1, \vec{\gamma})}$ still generate $\mathcal{B}_E$ as $n, f, \vec{\gamma}$ vary: see \cite[Remarque 12.20]{Laf18}.

Finally, the machinery of $\Lgrp{G}$-\emph{pseudo-characters} associates a semisimple $L$-parameter $\sigma \in \Phi(G)$ to any character $\nu: \mathcal{B} \to \overline{\Q_\ell}$, characterized as follows.
\begin{itemize}
	\item Version 1: for all $n \in \Z_{\geq 1}$, $\vec{\gamma} = (\gamma_1, \ldots, \gamma_n)$ and $f \in \mathscr{O}((\Lgrp{G})^n \sslash \hat{G})$, we have (see \cite[Proposition 11.7]{Laf18})
	\[ f\left(\sigma(\gamma_1), \ldots, \sigma(\gamma_n)\right) = \nu \circ \Theta_n(f)(\vec{\gamma}). \]
	\item Version 2: for all $n \in \Z_{\geq 1}$, $\vec{\gamma} = (\gamma_1, \ldots, \gamma_n)$ and $\tilde{f} \in \mathscr{O}\left(\hat{G} \bbslash (\Lgrp{G})^{\{0, \ldots, n\}} \sslash \hat{G}\right)$, we have
	\begin{equation}\label{eqn:sigma-characterization}
		\tilde{f}(\sigma(1), \sigma(\gamma_1), \ldots, \sigma(\gamma_n)) = \nu \left( S_{\{0, \ldots, n\}, \tilde{f}, (1, \vec{\gamma})} \right).
	\end{equation}
	The version 2 above is \textit{a priori} stronger, but they are actually equivalent by the preceding remarks on generators.
\end{itemize}

By the discussions preceding \cite[Remarque 12.21]{Laf18}, the map $\Hom_{\overline{\Q_\ell}\dcate{Alg}}(\mathcal{B}, \overline{\Q_\ell}) \to \Phi(G)$ above is injective. Hence we may write $\mathfrak{H}_\sigma = \mathfrak{H}_\nu$ if $\nu \mapsto \sigma \in \Phi(G)$, and set $\mathfrak{H}_\sigma = \{0\}$ if $\sigma$ does not match any $\nu$. This leads to the desired decomposition \eqref{eqn:global-decomp}.

\section{The transposes of excursion operators}\label{sec:transposes}
\subsection{On Verdier duality}\label{sec:Verdier}
Retain the notation of \S\ref{sec:local-models}. Among them, we recall only two points:
\begin{inparaenum}[(i)]
	\item the duality operator $\VerD$ is normalized with respect to $(X \smallsetminus \hat{N})^I$, and
	\item $W^{\vee,\theta}$ denotes the contragredient of $W \in \cate{Rep}_E((\Lgrp{G})^I)$ twisted by the Chevalley involution of $(\Lgrp{G})^I$.
\end{inparaenum}

The following results are recorded in \cite[Remarque 5.4]{Laf18}. For the benefit of the readers, we will give some more details below.

\begin{proposition}\label{prop:S-Verdier}
	There is a canonical isomorphism
	\[ \VerD \EuScript{S}^{(I_1, \ldots, I_k)}_{I,W,E} \rightiso \EuScript{S}^{(I_1, \ldots, I_k)}_{I,W^{\vee,\theta},E} \]
	between functors from $W \in \cate{Rep}_E((\Lgrp{G})^I)^{\mathrm{op}}$ to $\cate{Perv}_{G_{\sum_i \infty x_i}}\left(\Gra^{(I_1, \ldots, I_k)}_I\right)$.
\end{proposition}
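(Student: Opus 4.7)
The plan is to reduce to the base case $|I|=1$ with $W$ irreducible, and there to combine Verdier self-duality of normalized IC sheaves with the fact that the Chevalley involution preserves the highest weight. First I would reduce to the case where $W = \boxtimes_{i \in I} W_i$ with each $W_i \in \cate{Rep}_E(\Lgrp{G})$ irreducible, using that $\EuScript{S}^{(I_1,\ldots,I_k)}_{I,W,E}$ is built from $\EuScript{S}^{(I_1,\ldots,I_k)}_{I,\EuScript{R},E}$ by taking $\Lgrp{G}^I$-invariants after tensoring with the constant sheaf $W$, and that $\VerD$ commutes with taking invariants under a reductive group, with direct sums, and with tensor products by finite-dimensional constant sheaves (the multiplicity spaces $\mathfrak{W}_V$). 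Next, the factorization isomorphism over the open locus $U = U_{\identity_I} \subset X^I$ of pairwise distinct paws identifies $\Gra^{(I_1,\ldots,I_k)}_I |_U$ with $\prod_{i \in I} \Gra^{(\{i\})}_{\{i\}}$, and both sides restrict to external tensor products of their $|I|=1$ constituents. Since $\boxtimes$ of ULA perverse sheaves over a smooth base is compatible with $\VerD$, the problem over $U$ reduces to the case $|I|=1$; the ULA property of $\EuScript{S}^{(I_1,\ldots,I_k)}_{I,W,E}$ relative to $X^I$ then allows the resulting isomorphism over $U$ to be extended, and to remain an isomorphism, over all of $(X \smallsetminus \hat{N})^I$.

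For $|I|=1$ and $W = V$ irreducible of highest weight $\lambda$, the normalized perverse sheaf $\EuScript{S}^{(\{0\})}_{\{0\},V,E}$ is the normalized IC extension of the constant sheaf on the smooth stratum labeled by $\lambda$, so it is canonically Verdier self-dual: $\VerD \EuScript{S}_V \simeq \EuScript{S}_V$. Simultaneously, the Chevalley involution $\theta$ of $\hat{G}$ acts on $X^*(\hat{T})$ as $\chi \mapsto -w_0 \chi$; hence $V^{\vee,\theta}$ has highest weight $-w_0(-w_0 \lambda) = \lambda$, so $V^{\vee,\theta} \simeq V$ as $\Lgrp{G}$-representations, and the Satake equivalence yields $\EuScript{S}_{V^{\vee,\theta}} \simeq \EuScript{S}_V$. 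Composing the two isomorphisms produces the required arrow $\VerD \EuScript{S}_V \simeq \EuScript{S}_{V^{\vee,\theta}}$ in the base case.

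The main obstacle will be the canonicity of this isomorphism and its naturality in $W$, so as to upgrade the above to an isomorphism of functors rather than a mere collection of abstract identifications. This requires matching, under the Mirković--Vilonen fiber functor, the Poincaré-duality pairing on (hyper)cohomology induced by $\VerD$ with a canonical pinning-dependent pairing $V \otimes V^\theta \to E$; the Chevalley involution enters precisely because such a pairing identifies the highest weight line of $V$ with the dual of its lowest weight line, and the latter is the highest weight line of $V^\vee$ after twisting by $\theta$. Once this compatibility is established for every irreducible $V$, the formal reduction in the first paragraph promotes the isomorphism into a natural one in $W \in \cate{Rep}_E((\Lgrp{G})^I)$, completing the proof.
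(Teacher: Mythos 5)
Your reduction to $|I|=1$ via the factorization structure and the ULA property is the same one the paper uses. After that, however, your argument diverges and has two problems, one of presentation and one substantive.

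First, in the base case you compose the IC self-duality $\VerD\EuScript{S}_V \simeq \EuScript{S}_V$ with a Satake isomorphism $\EuScript{S}_V \simeq \EuScript{S}_{V^{\vee,\theta}}$ coming from the \emph{abstract} isomorphism $V \simeq V^{\vee,\theta}$ (same highest weight). This cannot by itself yield a \emph{canonical} or functorial isomorphism: an abstract isomorphism between two irreducibles is only determined up to scalar, and the issue is precisely to pin down the scalar compatibly with the tensor structure and morphisms in $W$. You flag this in your last paragraph, and the fix you sketch — matching the Verdier pairing under the MV fiber functor with a pinning-dependent pairing $V \otimes V^\theta \to E$ — is essentially the content of the reference the paper simply cites, namely \cite[Lemma 14]{BF08}. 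So this part is not wrong in spirit, but it is not actually carried out, and the route "self-dual IC plus $V^{\vee,\theta}\simeq V$" obscures rather than exhibits why the Chevalley involution, and not the identity, appears.

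Second and more seriously, you have omitted the Galois-descent step, which is the genuinely nontrivial part of the paper's proof and is indispensable for the statement as written (the functor lands in representations of $(\Lgrp{G})^I$, not $(\hat{G})^I$, and $G$ may be non-split). Even for split $G$ over $\F_q$, the Satake equivalence identifies the Tannakian group of equivariant perverse sheaves on $\Gra_x$ with $\hat{G}\rtimes_{\mathrm{geom}}\Upsilon$, where $\Upsilon$ acts via the ``geometric'' action that differs from the usual algebraic one by conjugation through $\rho_{\hat{B}}\circ\chi_{\mathrm{cycl}}$ (Richarz, Zhu). The paper's key observation is that $\theta$ stabilizes $\rho_{\hat{B}}$, hence the automorphism $\theta\rtimes_{\mathrm{geom}}\identity$ is carried to $\Lgrp{\theta}=\theta\rtimes_{\mathrm{alg}}\identity$ under the canonical identification $\hat{G}\rtimes_{\mathrm{geom}}\Upsilon\simeq\hat{G}\rtimes_{\mathrm{alg}}\Upsilon$. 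Without this comparison, you cannot conclude that the duality exchange is given by the Chevalley involution of the $L$-group in its standard algebraic form, which is what Proposition \ref{prop:S-Verdier} asserts and what the downstream computations of transposes require.
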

\begin{proof}
	As noted in \cite[\S B.6]{BG02}, $\VerD$ preserves the ULA property with respect to $\Gra^{(I_1, \ldots, I_k)}_I \to (X \smallsetminus \hat{N})^I$. Since $\EuScript{S}^{(I_1, \ldots, I_k)}_{I,W,E}$ is ULA, the factorization structure on $\Gra^{(I_1, \ldots, I_k)}_I$ reduces the affairs to the case $|I|=1$, i.e.\ the Beilinson--Drinfeld affine Grassmannian used in \cite{MV07, Ric14, Zh15}. Consider its fiber $\Gra_x$ over some point $x \in |X \smallsetminus \hat{N}|$. In the notation from \S\ref{sec:geometric-setup}, there is a left $G_{\infty x}$-action on $\Gra_x$.

%
%

	In the local setting above, denote the usual duality operator $\cate{Perv}_{G_{\infty}}(\Gra_x)$ by $\VerD$: normalization is not an issue here. The main ingredients are
	\begin{compactitem}
		\item the Satake functor $\cate{Rep}_E((\Lgrp{G})^I) \to \cate{Perv}_{G_{\infty x}}(\Gra_x)$, written as $W \mapsto \EuScript{S}_{W, E}$;
		\item a canonical isomorphism between functors in $W$:
		\[ \VerD \EuScript{S}_{W, E} \rightiso \EuScript{S}_{W^{\vee,\theta}, E}. \]
	\end{compactitem}
	Granting these ingredients, for general $|I|$ we obtain canonical isomorphisms $\VerD \EuScript{S}^{(I_1, \ldots, I_k)}_{I,W,E} \rightiso \EuScript{S}^{(I_1, \ldots, I_k)}_{I,W^{\vee,\theta},E}$

	Let us explain the two ingredients in the local setting. The functor $W \mapsto \EuScript{S}_{W, E}$ is obtained in \cite{Ric14} or \cite[Theorem A.12]{Zh15}, which are based on the case over separably closed fields in \cite{MV07}. In order to explain the effect of $\Lgrp{\theta}$, we shall review the case over the separable closure $\Bbbk$ of $\mathring{F}_x$ first. The canonical isomorphism $\VerD \EuScript{S}_{W, E} \rightiso \EuScript{S}_{W^{\vee,\theta}, E}$ over $\Bbbk$ can be found in \cite[Lemma 14]{BF08}, for example, where a stronger equivariant version is established; they work over $\CC$, but the argument is largely formal.

	Next, apply Galois descent as explicated in \cite[\S 6]{Ric14} or \cite[Appendix]{Zh15}. Let $\mathcal{C} := \cate{Perv}_{G_{\infty x}}(\Gra_x)$ and set $\mathcal{C}'$ to be its avatar over $\Bbbk$. The absolute Galois group $\Upsilon$ of $\mathring{F}_x$ operates on $\mathcal{C}'$ via $\otimes$-equivalences, in a manner compatible with the fiber functor (total cohomology), thus $\Upsilon$ acts on the Tannakian group $\hat{G}$ as well. By \cite[p.237]{Ric14}, $\mathcal{C}$ is equivalent as an abelian category to ($\mathcal{C}'$ + continuous descent data under $\Upsilon$). The Satake equivalence over $\Bbbk$ and the machinery from \textit{loc.\ cit.} furnish an equivalence of $\otimes$-categories
	\[ \cate{Perv}_{G_{\infty x}}(\Gra_x) \to \cate{Rep}_E(\hat{G} \rtimes_{\mathrm{geom}} \Upsilon), \]
	where $c$ means continuity, and ``geom'' means the Tannakian or ``geometric'' $\Upsilon$-action on $\hat{G}$. See \cite[Remarque 1.19]{Laf18} for the choice of commutativity constraints.

	By \cite[Proposition A.6]{Zh15} or \cite[Corollary 6.8]{Ric14}, the geometric $\Upsilon$-action on $\hat{G}$ differs from the familiar ``algebraic'' one by the adjoint action via $\rho_{\hat{B}} \circ \chi_{\mathrm{cycl}}: \Upsilon \to \Z_\ell^\times \to \hat{G}^{\mathrm{ad}}(\Q_\ell)$, where $\chi_{\mathrm{cycl}}$ is the $\ell$-adic cyclotomic character and $\rho_{\hat{B}}$ is the half-sum of positive roots in $\hat{B}$; in particular, $\hat{G} \rtimes_{\mathrm{geom}} \Upsilon \simeq \hat{G} \rtimes_{\mathrm{alg}} \Upsilon$ (= absolute Galois form of the $L$-group) continuously. Since $\theta \in \Aut(\hat{G})$ stabilizes $\rho_{\hat{B}}$, the isomorphism matches $\theta \rtimes_{\mathrm{geom}} \identity$ with the Chevalley involution $\theta \rtimes_{\mathrm{alg}} \identity =: \Lgrp{\theta}$.
	
	All in all, we obtain the Satake functor $W \mapsto \EuScript{S}_{W, E}$ as well as the canonical isomorphisms $\VerD \EuScript{S}_{W, E} \rightiso \EuScript{S}_{W^{\vee, \theta}, E}$. This completes the proof.
\end{proof}

Note that the equivariance can be upgraded to $G^\mathrm{ad}_{\sum_i \infty x_i}$ or $G^\mathrm{ad}_{\sum_i n_i x_i}$ where $n_i \gg 0$ relative to $W$, see \cite[Remaruqe 1.20]{Laf18}.

\begin{proposition}\label{prop:F-Verdier}
	There is a canonical isomorphism
	\[ \VerD \EuScript{F}^{(I_1, \ldots, I_k)}_{N,I,W,\Xi,E} \rightiso \EuScript{F}^{(I_1, \ldots, I_k)}_{N,I,W^{\vee,\theta},\Xi,E} \]
	between functors from $W \in \cate{Rep}_E((\Lgrp{G})^I)^{\mathrm{op}}$ to $\cate{Perv}\left(\Cht^{(I_1, \ldots, I_k)}_{N,I,W}/\Xi\right)$ that is compatible with coalescence of paws.
\end{proposition}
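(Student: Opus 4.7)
The plan is to deduce Proposition \ref{prop:F-Verdier} from Proposition \ref{prop:S-Verdier} by pulling back along the smooth morphism $\epsilon := \epsilon^{(I_1, \ldots, I_k), \Xi}_{N,I,W,\underline{n}}: \Cht^{(I_1, \ldots, I_k)}_{N,I,W}/\Xi \to \Gra^{(I_1, \ldots, I_k)}_{I,W}/G^{\mathrm{ad}}_{\sum_i n_i x_i}$ that was used in \eqref{eqn:F-sheaf} to define $\EuScript{F}^{(I_1, \ldots, I_k)}_{N,I,W,\Xi,E}$ as $\epsilon^* \EuScript{S}^{(I_1, \ldots, I_k)}_{I,W,E}$. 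Both source and target live over the common base $S := (X \smallsetminus \hat{N})^I$, and Verdier duality on each is normalized with respect to $S$ as in \S\ref{sec:geometric-setup}, so that $\Omega^{\mathrm{norm}} = \mathfrak{p}^! E_S$ on either stack.

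The central point is that in this normalized convention, $\VerD$ commutes with $\epsilon^*$. Since $\epsilon$ is a morphism of $S$-stacks one has $\Omega^{\mathrm{norm}}_{\mathrm{source}} = \epsilon^! \Omega^{\mathrm{norm}}_{\mathrm{target}}$; smoothness of $\epsilon$ identifies $\epsilon^!$ with $\epsilon^*$ up to a Tate twist which is precisely absorbed by the normalization (consistently with the fact, recalled in \S\ref{sec:local-models}, that $\EuScript{F}$ is normalized perverse whenever $\EuScript{S}$ is). One thus obtains a canonical natural isomorphism $\VerD \circ \epsilon^* \simeq \epsilon^* \circ \VerD$, which combined with Proposition \ref{prop:S-Verdier} yields
\[
\VerD \EuScript{F}^{(I_1, \ldots, I_k)}_{N,I,W,\Xi,E} \simeq \epsilon^* \VerD \EuScript{S}^{(I_1, \ldots, I_k)}_{I,W,E} \simeq \epsilon^* \EuScript{S}^{(I_1, \ldots, I_k)}_{I,W^{\vee,\theta},E} = \EuScript{F}^{(I_1, \ldots, I_k)}_{N,I,W^{\vee,\theta},\Xi,E},
\]
after descent of $\EuScript{S}$ via the equivariance furnished by geometric Satake as in \S\ref{sec:local-models}.

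Functoriality in $W \in \cate{Rep}_E((\Lgrp{G})^I)^{\mathrm{op}}$ is inherited from that of Proposition \ref{prop:S-Verdier} via $\epsilon^*$; for arbitrary (not necessarily irreducible) $W$ it extends through the matrix-coefficient decomposition of $\boxtimes_{i \in I} \mathscr{O}(\Lgrp{G})$ used in \S\ref{sec:geometric-setup}. Compatibility with coalescence of paws under a map $\zeta: I \to J$ amounts to the commutativity of a square comparing the above isomorphism with restriction along the partial diagonal $\Delta_\zeta: (X \smallsetminus \hat{N})^J \hookrightarrow (X \smallsetminus \hat{N})^I$; via $\epsilon$ this reduces to the analogous coalescence compatibility for $\EuScript{S}$, which in turn follows from the factorization structure on $\Gra^{(I_1, \ldots, I_k)}_I$ together with the ULA property of $\EuScript{S}$ over the base (ensuring that $\VerD$ commutes with restriction to the factorization loci). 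The main technical subtlety will be the careful bookkeeping of normalized conventions so that $\VerD \circ \epsilon^* \simeq \epsilon^* \circ \VerD$ is canonical without stray shifts or twists; once this is in place, the remainder of the argument is a formal consequence of Proposition \ref{prop:S-Verdier}.
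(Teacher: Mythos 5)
Your overall strategy — pull back Proposition \ref{prop:S-Verdier} along the smooth morphism $\epsilon$ used in \eqref{eqn:F-sheaf} — is the same as the paper's, and your conclusion is the correct one. But the central step of your argument, the claim that $\VerD\circ\epsilon^*\simeq\epsilon^*\circ\VerD$ ``without stray shifts or twists'' because the Tate twist ``is precisely absorbed by the normalization'', is not justified, and as stated the reason given for it is wrong. The normalization of \S\ref{sec:geometric-setup} only shifts the dualizing complex by $[-2\dim S](-\dim S)$ to remove the contribution of the base $S=(X\smallsetminus\hat{N})^I$; it does nothing to absorb the twist coming from a smooth morphism \emph{between} $S$-stacks. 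For any smooth $f$ of relative dimension $d_f$ between stacks over $S$, with both dualities normalized relative to $S$, one still has $\VerD\, f^*\,(-) \simeq f^*\,\VerD(-)\,[2d_f](d_f)$, since $\Omega_{\mathrm{source}}=f^!\Omega_{\mathrm{target}}=f^*\Omega_{\mathrm{target}}[2d_f](d_f)$. So $\VerD$ and $\epsilon^*$ do \emph{not} commute on the nose.

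What actually saves the day — and this is the content the paper's proof opens with and that your write-up skips — is that \emph{two} smooth morphisms of \emph{equal} relative dimension $d$ are in play: the quotient projection $q:\Gra^{(I_1,\ldots,I_k)}_{I,W}\to\Gra^{(I_1,\ldots,I_k)}_{I,W}/G^{\mathrm{ad}}_{\sum_i n_i x_i}$ used to descend $\EuScript{S}$, and the morphism $\epsilon$ used to pull back the descended sheaf $\EuScript{S}^\flat$. Descending $\VerD\EuScript{S}_1\rightiso\EuScript{S}_2$ along $q$ produces a shift: $(\VerD\EuScript{S}_1^\flat)[2d](d)\rightiso\EuScript{S}_2^\flat$. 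Pulling back by $\epsilon$ produces the opposite shift $[2d](d)$ in $\VerD\circ\epsilon^*\simeq(\epsilon^*\circ\VerD)[2d](d)$, and these two cancel exactly because $\dim q=\dim\epsilon=d=\dim G^{\mathrm{ad}}_{\sum_i n_i x_i}$. Your parenthetical remark that ``$\EuScript{F}$ is normalized perverse whenever $\EuScript{S}$ is'' is itself a manifestation of this same cancellation, so it cannot serve as an independent justification; you are implicitly invoking the equality of relative dimensions without naming it. To close the gap, you should state this equality explicitly (the paper cites \cite[Proposition 2.8]{Laf18} and the discussion before Corollaire 2.15 for it), make the $[2d](d)$ factor visible at the descent step, and show it is cancelled at the $\epsilon^*$ step, rather than asserting a spurious absorption by the normalization.
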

\begin{proof}
	First, by combining \cite[Proposition 2.8]{Laf18} and the explanations before Corollaire 2.15 of \textit{loc.\ cit.}, the smooth morphisms
	\[ \Cht^{(I_1, \ldots, I_k)}_{N,I,W}/\Xi \xrightarrow{\epsilon^{(I_1, \ldots, I_k), \Xi}_{N,I,W,\underline{n}}} \Gra^{(I_1, \ldots, I_k)}_{I,W}/G^\text{ad}_{\sum_i n_i x_i}, \quad \Gra^{(I_1, \ldots, I_k)}_{I,W} \to \Gra^{(I_1, \ldots, I_k)}_{I,W}/G^\text{ad}_{\sum_i n_i x_i} \]
	have the same relative dimension; denote it by $d$.
	
	Proposition \ref{prop:S-Verdier} gives a functorial isomorphism between descent data of shifted perverse sheaves from $\Gra^{(I_1, \ldots, I_k)}_{I,W}$ to $\Gra^{(I_1, \ldots, I_k)}_{I,W}/G^\text{ad}_{\sum_i n_i x_i}$, abbreviated as $\VerD \EuScript{S}_1 \rightiso \EuScript{S}_2$. Denote the corresponding shifted perverse sheaves on $\Gra^{(I_1, \ldots, I_k)}_{I,W}/G^\text{ad}_{\sum_i n_i x_i}$ as $\EuScript{S}_1^\flat$ and $\EuScript{S}_2^\flat$. By standard results, cf.\ \cite[9.1.2]{LO2}, the isomorphism above descends to
	\[ (\VerD \EuScript{S}_1^\flat) [2d](d) \rightiso \EuScript{S}_2^\flat. \]
	Since $\EuScript{F}^{(I_1, \ldots, I_k)}_{N,I,W,\Xi,E}$ (resp.\ $\EuScript{F}^{(I_1, \ldots, I_k)}_{N,I,W^{\vee,\theta},\Xi,E}$) is defined in \eqref{eqn:F-sheaf} as $(\epsilon^{(I_1, \ldots, I_k), \Xi}_{N,I,W,\underline{n}})^* \EuScript{S}_1^\flat$ (resp.\ $(\epsilon^{(I_1, \ldots, I_k), \Xi}_{N,I,W^{\vee,\theta},\underline{n}})^* \EuScript{S}_2^\flat$), the assertion follows immediately by the same standard result.
\end{proof}

Take any partition $I = I_1 \sqcup \cdots \sqcup I_k$, truncation parameter $\mu$ and $W \in \cate{Rep}_E((\Lgrp{G})^I)$. As a consequence of Propositions \ref{prop:S-Verdier} and \ref{prop:F-Verdier}, we deduce that $\Gra^{(I_1, \ldots, I_k)}_{I, W} = \Gra^{(I_1, \ldots, I_k)}_{I, W^{\vee,\theta}}$ and $\Cht^{(I_1, \ldots, I_k), \leq \mu}_{N, I, W} = \Cht^{(I_1, \ldots, I_k), \leq \mu}_{N, I, W^{\vee,\theta}}$.

\begin{remark}\label{rem:cup-product}
	Below is a review of the cup product of $!$-pushforward. Let $S$ be a regular scheme and let $\mathfrak{p}: \mathcal{X} \to S$ be an algebraic stack of finite type over $S$. Let $\mathcal{L}$, $\mathcal{L}'$ be in $\cate{D}^-_c(\mathcal{X}, E)$. Our goal is to define a canonical arrow
	\[ \mathfrak{p}_! \mathcal{L} \Lotimes \mathfrak{p}_! \mathcal{L}' \to \mathfrak{p}_! \left( \mathcal{L} \Lotimes \mathcal{L}' \right). \]
	Denote by $\Delta$ (resp.\ $\mathfrak{p} \times \mathfrak{p}$) the diagonal morphism $\mathcal{X} \to \mathcal{X} \dtimes{S} \mathcal{X}$ (resp. $\mathcal{X} \dtimes{S} \mathcal{X} \to S$). The Künneth formula \cite[11.0.14 Theorem]{LO2} yields a canonical isomorphism in $\cate{D}^-_c(S)$
	\[ \mathfrak{p}_! \mathcal{L} \Lotimes \mathfrak{p}_! \mathcal{L}' \simeq (\mathfrak{p} \times \mathfrak{p})_! \left( \mathcal{L} \boxtimes \mathcal{L}' \right), \]
	where $\boxtimes$ denotes the external tensor product. Since $\mathcal{L} \Lotimes \mathcal{L}' = \Delta^* \left( \mathcal{L} \boxtimes \mathcal{L}' \right)$, to obtain the desired arrow, it remains to use the
	\[ (\mathfrak{p} \times \mathfrak{p})_! \to (\mathfrak{p} \times \mathfrak{p})_! \Delta_! \Delta^* = \mathfrak{p}_! \Delta^* \]
	arising from $\identity \to \Delta_* \Delta^* = \Delta_! \Delta^*$, as $\Delta$ is a closed immersion.
\end{remark}

Consider the normalized dualizing complex $\Omega$ on $\Cht^{(I_1, \ldots, I_k), \leq \mu}_{N, I, W} / \Xi$. The \emph{trace map}
\[ \mathrm{Tr}: (\mathfrak{p}^{(I_1, \ldots, I_k), \leq \mu}_{N, I})_! \Omega \xrightarrow{\mathrm{Tr}} E_{(X \smallsetminus \hat{N})^I} \]
in Verdier duality is obtained by adjunction from $\Omega \rightiso (\mathfrak{p}^{(I_1, \ldots, I_k), \leq \mu}_{N, I})^! E_{(X \smallsetminus \hat{N})^I}$.

On the other hand, Proposition \ref{prop:F-Verdier} affords a canonical arrow $\EuScript{F}^{(I_1, \ldots, I_k)}_{N, I, W^{\vee,\theta}, \Xi, E} \Lotimes \EuScript{F}^{(I_1, \ldots, I_k)}_{N, I, W, \Xi, E} \to \Omega$. Apply the cup-product construction to the stack $\Cht^{(I_1, \ldots, I_k), \leq \mu}_{N, I, W} / \Xi$ over $(X \smallsetminus \hat{N})^I$ to obtain canonical arrows in $\cate{D}^b_c((X \smallsetminus \hat{N})^I, E)$:
\begin{equation}\label{eqn:H-Verdier} \begin{tikzcd}[row sep=small, column sep=tiny]
		\EuScript{H}^{\leq \mu, E}_{N, I, W^{\vee,\theta}} \Lotimes \EuScript{H}^{\leq \mu, E}_{N, I, W} \arrow[equal, d] & E_{(X \smallsetminus \hat{N})^I}\\
		\left( \mathfrak{p}^{(I_1, \ldots, I_k), \leq \mu}_{N, I} \right)_! \left( \EuScript{F}^{(I_1, \ldots, I_k)}_{N, I, W^{\vee,\theta}, \Xi, E} \right) \Lotimes \left( \mathfrak{p}^{(I_1, \ldots, I_k), \leq \mu}_{N, I} \right)_! \left(\EuScript{F}^{(I_1, \ldots, I_k)}_{N, I, W, \Xi, E} \right) \arrow[d] & \\
		\left( \mathfrak{p}^{(I_1, \ldots, I_k), \leq \mu}_{N, I} \right)_! \left( \EuScript{F}^{(I_1, \ldots, I_k)}_{N, I, W^{\vee,\theta}, \Xi, E} \Lotimes \EuScript{F}^{(I_1, \ldots, I_k)}_{N, I, W, \Xi, E} \right) \arrow[r] & \left( \mathfrak{p}^{(I_1, \ldots, I_k), \leq \mu}_{N, I} \right)_! \Omega \arrow[uu, "\mathrm{Tr}"]
\end{tikzcd}\end{equation}

By homological common sense (see \cite[Ex. I.24 (ii)]{KS90} for example), taking $\mathrm{H}^\bullet$ in \eqref{eqn:H-Verdier} with respect to the ordinary $t$-structure on $(X \smallsetminus \hat{N})^I$ yield natural arrows between $E$-sheaves over $(X \smallsetminus \hat{N})^I$
\begin{equation*}
	\mathfrak{B}^{\Xi,E}_{N,I,W}: \EuScript{H}^{i, \leq\mu, E}_{N, I, W^{\vee,\theta}} \dotimes{E} \EuScript{H}^{-i, \leq\mu, E}_{N, I, W} \to E_{(X \smallsetminus \hat{N})^I}, \quad i \in \Z;
\end{equation*}
we will only use the case $i = 0$ in this article.

Following \cite[Remarque 9.2]{Laf18}, we may even pass to $\varinjlim_\mu$ and look at the stalk at $\xi \in \left\{ \overline{\eta^I}, \Delta(\overline{\eta})\right\}$, thereby obtain from $\mathfrak{B}^{\Xi, E}_{N, I, W}$ the $E$-bilinear pairings
\[ \lrangle{\cdot, \cdot}_\xi: \varinjlim_\mu \EuScript{H}^{0, \leq\mu, E}_{N, I, W^{\vee,\theta}} \big|_\xi \dotimes{E} \varinjlim_\mu \EuScript{H}^{0, \leq\mu, E}_{N, I, W} \big|_\xi \longrightarrow E. \]
Their relation with the arrow $\mathfrak{sp}$ of specialization is given by \cite[(9.6)]{Laf18}
\begin{equation}\label{eqn:pairing-sp}
	\lrangle{ \mathfrak{sp}^* h, \mathfrak{sp}^* h'}_{\overline{\eta^I}} = \lrangle{h, h'}_{\Delta(\overline{\eta})}.
\end{equation} 

In the discussions surrounding \eqref{eqn:H-vs-Cc}, we have seen that $\Cht_{N, \emptyset, \mathbf{1}}/\Xi$ is the constant stack $\Bun_{G,N}(\F_q)/\Xi$ over $\Spec \F_q$. The upshot is that, as in \cite[Remarque 9.2]{Laf18}, the pairing $\lrangle{\cdot, \cdot}_{\Delta(\overline{\eta})}$ for $I=\emptyset$ reduces to the integration pairing on $C_c(G(\mathring{F}) \backslash G(\A)/K_N \Xi; E)$, assuming $\mathrm{mes}(K_N)=1$. Upon restriction to Hecke-finite part, we get the pairing $\lrangle{\cdot, \cdot}$ for $H_{\emptyset, \mathbf{1}}$ in Remark \ref{rem:pairing-level}. The same holds for $H_{\{0\}, \mathbf{1}}$ by coalescence \eqref{eqn:H-vs-Ccusp}.

\subsection{Frobenius invariance}
For every morphism $f$ between reasonable schemes or stacks, we will denote by ``$\mathrm{can}$'' the canonical isomorphisms exchanging $f^* \leftrightarrow f^!$ and $f_* \leftrightarrow f_!$ under $\VerD$. When $f$ is a universal homeomorphism (resp.\ open immersion), we have $f_* = f_!$ and $f^* = f^!$ (resp.\ only $f^* = f^!$).

Merge the conventions from \S\ref{sec:Verdier} and \S\ref{sec:partial-Frobenius}. Let $J \subset I$ be finite sets, $I = I_1 \sqcup \cdots \sqcup I_k$ with $J = I_1$. We are going to explicate the compatibility between $\mathfrak{B}^{\Xi, E}_{N, I, W}$ and $\Frob_J^* \EuScript{H}^{\leq\mu, E}_{N,I,W} \xrightarrow{F_J} \EuScript{H}^{\leq\mu', E}_{N,I,W}$, i.e.\ \eqref{eqn:F_J}.

\begin{lemma}\label{prop:invariance-1}
	In $\cate{D}^b_c \left( \Cht^{(I_1, \ldots, I_k)}_{N,I,W}/\Xi, E \right)$, there is a commutative diagram whose arrows are all invertible
	\[\begin{tikzcd}[column sep=small]
		(\Frob^{(I_1, \ldots, I_k)}_{I_1, N, W^{\vee,\theta}})^* \EuScript{F}^{(I_2, \ldots, I_1)}_{N, I, W^{\vee,\theta}, E} \arrow[r] \arrow[d, "{F^{(I_1, \ldots, I_k)}_{I,N,W^{\theta,\vee}} }"'] & (\Frob^{(I_1, \ldots, I_k)}_{I_1, N, W})^* \VerD \EuScript{F}^{(I_2, \ldots, I_1)}_{N,I,W,\Xi,E} \arrow[r, "\mathrm{can}", "\sim"'] & \VerD(\Frob^{(I_1, \ldots, I_k)}_{I_1, N, I})^* \EuScript{F}^{(I_2, \ldots, I_1)}_{N,I,W,\Xi,E} \\
		\EuScript{F}^{(I_1, \ldots, I_k)}_{N, I, W^{\vee,\theta}, \Xi, E} \arrow[r] & \VerD \EuScript{F}^{(I_1, \ldots, I_k)}_{N,I,W,\Xi,E} \arrow[ru, "{\VerD F^{(I_1, \ldots, I_k)}_{I,N,W} }"'] &
	\end{tikzcd}\]
	where $F^{(I_1, \ldots, I_k)}_{I_1, N, \ldots}$ is from \eqref{eqn:partial-Frob-F}, and the horizontal arrows except $\mathrm{can}$ are induced by Proposition \ref{prop:F-Verdier}.
\end{lemma}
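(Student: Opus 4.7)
My plan is to reduce the commutativity to an analogous statement on the affine Grassmannian, and thence to a Tannakian computation. By construction, the left vertical arrow $F^{(I_1, \ldots, I_k)}_{I_1, N, W^{\vee, \theta}}$ from \eqref{eqn:partial-Frob-F} is the $\epsilon^*$-pullback of a Frobenius correspondence $\Phi \colon \Frob_{I_1}^* \EuScript{S}^{(I_2, \ldots, I_1)}_{I, W, E} \rightiso \EuScript{S}^{(I_1, \ldots, I_k)}_{I, W, E}$ on the affine Grassmannian (after descent to the quotient by $G^{\mathrm{ad}}_{\sum_i n_i x_i}$) along the smooth morphism $\epsilon = \epsilon^{(I_1, \ldots, I_k), \Xi}_{N, I, W, \underline{n}}$, while the two horizontal arrows are similarly $\epsilon^*$-pullbacks of the duality isomorphism of Proposition \ref{prop:S-Verdier}. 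Since $\Frob^{(I_1, \ldots, I_k)}_{I_1, N}$ covers $\Frob_{I_1}$ compatibly with $\epsilon$ (the commutative square preceding \eqref{eqn:partial-Frob-F}), and since the canonical exchange $\mathrm{can} \colon f^* \VerD \rightiso \VerD f^!$ is natural under smooth base change, the diagram in question on $\Cht^{(I_1, \ldots, I_k)}_{N, I, W}/\Xi$ is the $\epsilon^*$-pullback of an analogous diagram on $\Gra^{(I_1, \ldots, I_k)}_{I, W}/G^{\mathrm{ad}}_{\sum_i n_i x_i}$. It thus suffices to show that $\Phi$ commutes with the duality isomorphism of Proposition \ref{prop:S-Verdier}.

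Next, I would invoke the factorization structure on $\Gra^{(I_1, \ldots, I_k)}_I$, together with the ULA property and the multiplicativity of both Verdier duality and the partial Frobenius correspondences under external products over disjoint paws, to reduce to the case $|I|=1$ at a single closed point $x \in |X \smallsetminus \hat{N}|$. In that local setting, the proof of Proposition \ref{prop:S-Verdier} identifies $\cate{Perv}_{G_{\infty x}}(\Gra_x)$ with $\cate{Rep}_E(\hat{G} \rtimes \Upsilon)$, where $\Upsilon = \Gal(\overline{\mathring{F}_x}|\mathring{F}_x)$, in such a way that $\VerD$ corresponds to $W \mapsto W^{\vee, \theta}$ while the Weil descent datum encoding $\Phi_W \colon \Frob^* \EuScript{S}_{W, E} \rightiso \EuScript{S}_{W, E}$ corresponds to the restriction-along-$\Frob$ endofunctor of $\cate{Rep}_E(\hat{G} \rtimes \Upsilon)$. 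Since $\Upsilon$ acts on $\hat{G}$ by pinned automorphisms and $\theta$ is itself a pinned automorphism, the two commute inside $\Aut(\hat{G})$; equivalently, $W \mapsto W^{\vee, \theta}$ intertwines the two restriction-along-$\Frob$ endofunctors of $\cate{Rep}_E(\hat{G} \rtimes \Upsilon)$, and transporting this natural isomorphism through the Satake equivalence yields the required commutativity.

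The principal obstacle is bookkeeping: one must track the Tannakian isomorphism $\VerD \EuScript{S}_{W, E} \rightiso \EuScript{S}_{W^{\vee, \theta}, E}$ of \cite{BF08} through the Galois-descent formalism of \cite{Ric14, Zh15} and match it cell-by-cell with the Weil descent datum that encodes $\Phi$ on each of the four corners of the diagram. The genuine mathematical input is only the harmless fact that $\theta$ commutes with the Galois action on $\hat{G}$, combined with the naturality of the six-functor identities under smooth pullback and external products.
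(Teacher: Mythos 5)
Your first reduction — pulling the whole diagram back along the smooth morphism $\epsilon^{(I_1,\ldots,I_k),\Xi}_{N,I,W,\underline{n}}$ to the quotient Grassmannian — matches the paper exactly, and the observation that the \emph{left square} then commutes by naturality of the Frobenius correspondence $\Phi$ under the duality isomorphism of Proposition~\ref{prop:S-Verdier} is also the paper's argument. Where you diverge is in how you handle the \emph{right triangle}, the one containing the morphism $\mathrm{can}\colon \Frob^*\,\VerD \rightiso \VerD\,\Frob^*$, and it is precisely there that a gap opens.

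You propose to dispose of the whole reduced diagram by further factorizing to $|I|=1$ and then transporting everything through the Satake equivalence $\cate{Perv}_{G_{\infty x}}(\Gra_x) \simeq \cate{Rep}_E(\hat{G}\rtimes\Upsilon)$, where $\VerD$ becomes $W\mapsto W^{\vee,\theta}$ and (you claim) the Frobenius descent datum for $\Phi$ becomes a ``restriction-along-$\Frob$'' operation, so that the commutativity reduces to $\theta$ commuting with the pinned $\Upsilon$-action on $\hat{G}$. The trouble is that $\mathrm{can}$ is not a morphism that lives inside that Tannakian abelian category: it is the canonical exchange isomorphism between $f^*$ and $f^!$ under Verdier duality for the universal homeomorphism $f=\Frob$, a piece of the six-functor formalism. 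Your argument never says what $\mathrm{can}$ becomes on the representation-theoretic side, so ``transporting through Satake'' does not by itself establish commutativity of the triangle. The sentence ``the two commute inside $\Aut(\hat{G})$'' addresses the equivariance of the duality isomorphism under the Galois action — but that is already built into Proposition~\ref{prop:S-Verdier} and is what makes the square commute; it does not touch the exchange morphism.

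The paper's proof closes exactly this gap. It invokes \cite[4.8.2 Corollary]{LO1} to write $\mathrm{can}$ explicitly as the composite
\[ \Frob^*\,\iHom(-,\Omega) \xrightarrow{\text{natural}} \iHom(\Frob^*(-), \Frob^*\Omega) \xrightarrow{f_*} \iHom(\Frob^*(-),\Omega), \]
where $f\colon \Frob^*\Omega \rightiso \Omega$ is the canonical isomorphism for the dualizing complex, and then identifies $f$ with the Frobenius correspondence $\Phi_\Omega$. Once $f = \Phi_\Omega$ is known, the triangle reduces to a completely standard compatibility of $\Phi$ with $\iHom$, valid for arbitrary constructible complexes — no Satake input needed. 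So the missing ingredient in your write-up is not the harmless pinned-automorphism fact but the identification $f=\Phi_\Omega$; without it, you have not related $\mathrm{can}$ to the partial Frobenius morphism $F^{(I_1,\ldots,I_k)}_{I_1,N,W}$ at all. Your additional factorization down to $|I|=1$ is also unnecessary: once on the Grassmannian the paper works one tensor factor $\Gra^{(I_j)}_{I_j,W_j}$ at a time with the full Frobenius, and the relevant compatibility is sheaf-theoretic rather than Tannakian.
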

\begin{proof}
	We may assume $W = \boxtimes_{j=1}^k W_j$ is irreducible. Using the definition \eqref{eqn:F-sheaf}, the smoothness of $\epsilon^{(I_1, \ldots, I_k), \Xi}_{N, I, W, \underline{n}}$ as well as the ULA properties of $\EuScript{F}$ and $\EuScript{S}$, the desired commutativity eventually reduces to that of
	\[\begin{tikzcd}
		\Frob^* \EuScript{S}^{(I_j)}_{I_j, W_j^{\vee,\theta}, E} \arrow[r] \arrow[d, "{\Phi}"'] & \Frob^* \VerD \EuScript{S}^{(I_j)}_{I_j, W_j, E} \arrow[r, "\text{can}", "\sim"'] \arrow[d, "{\Phi}"] & \VerD \Frob^* \EuScript{S}^{(I_j)}_{I_j, W_j, E} \\
		\EuScript{S}^{(I_j)}_{I_j, W_j^{\vee,\theta}, E} \arrow[r] & \VerD \EuScript{S}^{(I_j)}_{I_j, W_j, E} \arrow[ru, "{\VerD \Phi}"'] & 
	\end{tikzcd}\]
	in $\cate{D}^b_c\left( \Gra^{(I_j)}_{I_j, W_j}/ G^\mathrm{ad}_{\sum_{i \in I_j} n_i x_i}, E \right)$, for each $1 \leq j \leq k$. Here $\Phi$ stands for the usual Frobenius correspondences, and the horizontal arrows except $\mathrm{can}$ are from Proposition \ref{prop:S-Verdier}. The square commutes by the functoriality of $\Phi$.
	
	As for the triangular part, \cite[4.8.2 Corollary]{LO1} says that $\mathrm{can}: \Frob^* \VerD \rightiso \VerD \Frob^*$ equals
	\[ \Frob^* \iHom(-, \Omega) \xrightarrow{\text{natural}} \iHom(\Frob^*(-), \Frob^* \Omega) \xrightarrow{f_*} \iHom(\Frob^*(-), \Omega), \]
	where $\Omega$ is the dualizing complex and $f: \Frob^* \Omega \rightiso \Omega$ is the canonical isomorphism furnished by \textit{loc.\ cit.} Both $f$ and ``$\mathrm{can}$'' reflect the fact that universal homeomorphisms conserve duality. In our case, that fact is also realized by transport of structure via Frobenius, i.e.\ we have $f = \Phi_\Omega$, the Frobenius correspondence for $\Omega$. The desired commutativity thus reduces to that of
	\[\begin{tikzcd}
		\Frob^* \iHom(\EuScript{S}, \Omega) \arrow[d, "{\Phi_{\iHom(\EuScript{S}, \Omega)}}"'] \arrow[r, "\text{natural}" inner sep=0.5em] & \iHom(\Frob^* \EuScript{S}, \Frob^* \Omega) \arrow[ld, "{(\Phi_S^{-1})^* \circ (\Phi_\Omega)_* }"] \\
		\iHom(\EuScript{S}, \Omega) & 
	\end{tikzcd}\]
	for all $\EuScript{S} \in \cate{D}^b_c\left( \Gra^{(I_j)}_{I_j, W_j}/ G^\mathrm{ad}_{\sum_{i \in I_j} n_i x_i}, E \right)$. This is by now standard.
\end{proof}

Re-introduce the truncation parameters $\mu' \gg \mu$ so that \eqref{eqn:mu-mu'} holds with respect to both $W$ and $W^{\vee,\theta}$. Let $a_1$ (universal homeomorphism) and $a_2$ (open immersion) be as in \eqref{eqn:corr-diagram}. As $\mu$ increases, $\Cht^{\cdots, \leq \mu}_{N, I, W}$ (resp.\ $\Cht^{\cdots, \leq \mu'}_{\cdots}$) form an open covering of $\Cht^{\cdots}_{N, I, W}$.

To state the next result, we write $\Omega^{\leq \mu}$ (resp.\ $\Omega$) for the normalized dualizing complex on $\Cht^{\cdots, \leq \mu}_{N, I, W}/\Xi$ (resp.\ on $a_1^{-1} \Cht^{(I_2, \ldots, I_1), \leq\mu}_{N,I,W}/\Xi$). Recall that dualizing complexes are unique up to unique isomorphisms \cite[3.4.5]{LO1}. There are canonical isomorphisms $a_1^* \Omega^{\leq \mu} \rightiso \Omega \leftiso a_2^! \Omega^{\leq \mu'}$, since $a_1^* = a_1^!$.

\begin{lemma}\label{prop:invariance-2}
	In $\cate{D}^b_c\left( a_1^{-1} \Cht^{(I_2, \ldots, I_1), \leq\mu}_{N,I,W}/\Xi, E \right)$, there is a commutative diagram
	\[\begin{tikzcd}[column sep=small, row sep=tiny]
		a_1^* \underbracket{ \EuScript{F}^{(I_2, \ldots, I_1)}_{N, I, W^{\vee,\theta}, \Xi, E} }_{\text{on $\leq \mu$}} \; \Lotimes \; a_1^* \underbracket{ \EuScript{F}^{(I_2, \ldots, I_1)}_{N, I, W, \Xi, E} }_{\text{on $\leq \mu$}} \arrow[dd, "{\Frob^{(I_1, \ldots, I_k)}_{I_1, N, W^{\vee,\theta}} \Lotimes \Frob^{(I_1, \ldots, I_k)}_{I_1, N, W} }"'] \arrow[r] & a_1^* \Omega^{\leq \mu} \arrow[rd, "\simeq"] \arrow[dd, "\simeq"] & \\
		& & \Omega \\
		a_2^! \underbracket{ \EuScript{F}^{(I_1, \ldots, I_k)}_{N, I, W^{\vee,\theta}, \Xi, E} }_{\text{on $\leq \mu'$}} \; \Lotimes \; a_2^! \underbracket{ \EuScript{F}^{(I_1, \ldots, I_k)}_{N,I,W,\Xi,E} }_{\text{on $\leq \mu'$}} \arrow[r] & a_2^! \Omega^{\leq \mu'} \arrow[ru, "\simeq"'] &
	\end{tikzcd}\]
	where the arrows from $\cdots \Lotimes \cdots$ to $\Omega$ are induced from Lemma \ref{prop:invariance-1}.
\end{lemma}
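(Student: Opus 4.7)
The plan is to factor each horizontal arrow of the diagram through Proposition~\ref{prop:F-Verdier} followed by the canonical evaluation pairing of Verdier duality, and then reduce the whole commutativity to Lemma~\ref{prop:invariance-1} combined with naturality of evaluation.

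First I would observe that the pairing $\EuScript{F}^{\cdots}_{N,I,W^{\vee,\theta},\Xi,E} \Lotimes \EuScript{F}^{\cdots}_{N,I,W,\Xi,E} \to \Omega^{\cdots}$ used to build each row is, by construction, the composition of the identification $\EuScript{F}^{\cdots}_{N,I,W^{\vee,\theta},\Xi,E} \rightiso \VerD \EuScript{F}^{\cdots}_{N,I,W,\Xi,E}$ of Proposition~\ref{prop:F-Verdier} (tensored with identity) with the standard evaluation morphism $\mathrm{ev}: \VerD \EuScript{F} \Lotimes \EuScript{F} \to \Omega^{\cdots}$. Pull the top row back along $a_1^*$ and the bottom row along $a_2^!$; since $a_1$ is a universal homeomorphism ($a_1^* = a_1^!$) and $a_2$ is an open immersion ($a_2^! = a_2^*$), the canonical isomorphisms $\mathrm{can}: a_i^? \VerD \rightiso \VerD a_i^?$ of \cite{LO2} and the uniqueness of dualizing complexes \cite[3.4.5]{LO1} identify $a_1^* \Omega^{\leq\mu} \simeq \Omega \simeq a_2^! \Omega^{\leq\mu'}$ and convert the two pullback pairings into evaluation pairings $\mathrm{ev}: \VerD \EuScript{F}^?_W \Lotimes \EuScript{F}^?_W \to \Omega$ on the common space $a_1^{-1}\Cht^{(I_2, \ldots, I_1), \leq \mu}_{N,I,W}/\Xi$, where I abbreviate $\EuScript{F}^<_W := a_1^* \EuScript{F}^{(I_2,\ldots,I_1)}_{N,I,W,\Xi,E}$ and $\EuScript{F}^>_W := a_2^! \EuScript{F}^{(I_1,\ldots,I_k)}_{N,I,W,\Xi,E}$.

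With this factorization in hand, the diagram of the lemma decomposes into two pieces. The \emph{identification} piece asks that the two composites $\EuScript{F}^<_{W^{\vee,\theta}} \xrightarrow{F_{W^{\vee,\theta}}} \EuScript{F}^>_{W^{\vee,\theta}} \rightiso \VerD \EuScript{F}^>_W$ and $\EuScript{F}^<_{W^{\vee,\theta}} \rightiso \VerD \EuScript{F}^<_W \xrightarrow{(\VerD F_W)^{-1}} \VerD \EuScript{F}^>_W$ agree, where the unnamed arrows are those of Proposition~\ref{prop:F-Verdier} composed with $\mathrm{can}$ and $F_W: \EuScript{F}^<_W \rightiso \EuScript{F}^>_W$ is the partial Frobenius cohomological correspondence of \eqref{eqn:F-a1-a2}; this is exactly Lemma~\ref{prop:invariance-1}, restricted to the open substack $a_1^{-1}\Cht^{(I_2, \ldots, I_1), \leq \mu}_{N,I,W}/\Xi$ and pulled back via $a_1^*$ and $a_2^!$ on the respective sides. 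The \emph{naturality} piece asks that $\mathrm{ev}$ is invariant under the simultaneous action of $F_W$ and $(\VerD F_W)^{-1}$, which is the tautological adjunction $\mathrm{ev}_A(\VerD F(\xi), a) = \mathrm{ev}_B(\xi, F(a))$ valid for any isomorphism $F: A \rightiso B$ of constructible complexes.

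The main technical obstacle I anticipate is bookkeeping: verifying that the particular identification $a_1^* \Omega^{\leq \mu} \simeq \Omega \simeq a_2^! \Omega^{\leq \mu'}$ used in the right-hand triangle of the diagram is indeed the one compatible with $\mathrm{can}$ and $\mathrm{ev}$, i.e., arises from applying $a_i^?$ and $\mathrm{can}$ to the dualizing complex $\VerD E_{(X \smallsetminus \hat N)^I}$. This reduces, via \cite[3.4.5]{LO1}, to compatibility of the $\mathrm{can}$ isomorphisms with the trace morphisms of relative Verdier duality, which is built into the six-functor formalism for algebraic stacks in \cite{LO1, LO2}. Granted these standard compatibilities, the combination of Lemma~\ref{prop:invariance-1} and the naturality of $\mathrm{ev}$ yields the required commutativity.
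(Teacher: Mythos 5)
Your proposal is correct and takes essentially the same route as the paper: reduce to the outer pentagon, invoke Lemma~\ref{prop:invariance-1} after noting that the partial Frobenius correspondences on the truncated substacks are exactly the restrictions of the arrows $F^{\cdots}$, and check that the dualizing-complex identifications match the morphism $\mathrm{can}$ appearing in Lemma~\ref{prop:invariance-1}. The paper's proof is terser (it delegates the factorization of the pairing through $\VerD$ and the evaluation morphism to the reader), whereas you spell these pieces out explicitly and flag the bookkeeping issue about the dualizing complexes, which the paper also addresses by the remark that the two $\Omega$-identifications ``match the morphism `$\mathrm{can}$' in Lemma~\ref{prop:invariance-1}.''
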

\begin{proof}
	It suffices to show the commutativity of the outer pentagon, since the triangle is defined to be commutative. Recall the passage from \eqref{eqn:partial-Frob-F} to \eqref{eqn:F-a1-a2}: $\Frob^{(I_1, \ldots, I_k)}_{I_1, N, \ldots}$ is obtained by restricting $F^{(I_1, \ldots, I_k)}_{I_1, N, \ldots}$ to the open substacks cut out by the conditions $\leq \mu$ and $\leq \mu'$. It remains to apply Lemma \ref{prop:invariance-1}; note that the effect of arrows $a_1^* \Omega^{\leq \mu} \rightiso \Omega \leftiso a_2^! \Omega^{\leq \mu'}$ match the morphism ``$\mathrm{can}$'' in Lemma \ref{prop:invariance-1}.
\end{proof}

\begin{proposition}\label{prop:Frob-invariance}
	Write $J := I_1$. There is a commutative diagram in $\cate{D}^b_c((X \smallsetminus \hat{N}), E)$
	\[\begin{tikzcd}
		\Frob_J^* \EuScript{H}^{\leq \mu, E}_{N, I, W^{\vee,\theta}} \Lotimes \Frob_J^* \EuScript{H}^{\leq \mu, E}_{N, I, W} \arrow[r, "{\Frob_J^* \text{\eqref{eqn:H-Verdier}} }" inner sep=0.6em] \arrow[d, "{F_J \Lotimes F_J}"'] & \Frob_J^* E_{(X \smallsetminus \hat{N})^I} \arrow[d, "F_J"] \\
		\EuScript{H}^{\leq \mu', E}_{N, I, W^{\vee,\theta}} \Lotimes \EuScript{H}^{\leq \mu', E}_{N, I, W} \arrow[r, "\text{\eqref{eqn:H-Verdier}}"' inner sep=0.6em] & E_{(X \smallsetminus \hat{N})^I}
	\end{tikzcd}\]
	where
	\begin{compactitem}
		\item $F_J \Lotimes F_J$ is induced from the $F_J$ in \eqref{eqn:F_J},
		\item the $F_J$ on the right is the evident partial Frobenius morphism for $E_{(X \smallsetminus \hat{N})^I}$.
	\end{compactitem}
\end{proposition}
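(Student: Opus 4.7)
The plan is to decompose both sides of the square into formal constituents and reduce to Lemma~\ref{prop:invariance-2} together with standard functorialities of the cup product and the trace map. Abbreviate $\mathfrak{p} = \mathfrak{p}^{(I_1, \ldots, I_k), \leq \mu}_{N,I}$ and $\mathfrak{p}' = \mathfrak{p}^{(I_1, \ldots, I_k), \leq \mu'}_{N,I}$, and let $a_1, a_2$ be as in \eqref{eqn:corr-diagram}. By the construction \eqref{eqn:H-Verdier}, the pairing factors as
\[
\mathfrak{p}_! \EuScript{F}^{\vee,\theta} \Lotimes \mathfrak{p}_! \EuScript{F} \xrightarrow{\cup} \mathfrak{p}_!\bigl(\EuScript{F}^{\vee,\theta} \Lotimes \EuScript{F}\bigr) \xrightarrow{\text{Prop.~\ref{prop:F-Verdier}}} \mathfrak{p}_!\Omega^{\leq\mu} \xrightarrow{\mathrm{Tr}} E,
\]
and likewise at level $\mu'$. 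On the other hand, by \eqref{eqn:F_J} and the discussion around \eqref{eqn:BC}, the morphism $F_J$ is the composite
\[
\Frob_J^*\mathfrak{p}_!(-) \xrightarrow{\mathrm{BC}} \mathfrak{p}'_! a_1^*(-) \xrightarrow{\mathfrak{p}'_! F} \mathfrak{p}'_! a_2^!(-) \xrightarrow{\text{counit}} \mathfrak{p}'_!(-)
\]
applied to the chtouca sheaf $\EuScript{F}^{(I_2,\ldots,I_1)}$, where $F$ denotes the cohomological correspondence \eqref{eqn:F-a1-a2}. The strategy is to verify compatibility of each of these three steps with the tensor product of pairings, then assemble.

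First, I would check that the cup product of Remark~\ref{rem:cup-product} is natural under pullback by $a_1$ and $a_2$: since $a_1$ is a universal homeomorphism and $a_2$ an open immersion one has $a_i^* = a_i^!$ for $i=1,2$, and both Künneth and the description of $\Lotimes$ via $\Delta^*$ of the external tensor product commute with each such pullback by general nonsense. Second, I would insert Lemma~\ref{prop:invariance-2}, which is precisely the statement that the duality morphism of Proposition~\ref{prop:F-Verdier} is intertwined by the cohomological correspondence $F \Lotimes F$ with the canonical identification $a_1^*\Omega^{\leq\mu} \simeq \Omega \simeq a_2^!\Omega^{\leq\mu'}$. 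Third, I would check that the trace is compatible with the base change $\mathrm{BC}$ and with the counit $a_{2!}a_2^! \to \identity$: for $\mathrm{BC}$, this is because universal homeomorphisms induce equivalences of étale topoi, so the whole six-functor package (trace included) transports canonically; for the counit, this is standard functoriality of $\mathrm{Tr}$ along an open immersion.

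With these three compatibilities in hand, one applies $\mathfrak{p}_!$ to the diagram of Lemma~\ref{prop:invariance-2}, and the outer boundary of the resulting big diagram is exactly the square asserted by the proposition. The main obstacle will be the bookkeeping of all the base-change and duality isomorphisms; in particular, one must make sure that the identifications $a_1^*\Omega^{\leq\mu} \simeq \Omega \simeq a_2^!\Omega^{\leq\mu'}$ implicit in Lemma~\ref{prop:invariance-2} coincide with those used to match the trace maps in Step~3. This is ultimately guaranteed by the uniqueness of the normalized dualizing complex up to unique isomorphism recalled in \S\ref{sec:Verdier}, so no genuinely new input beyond Lemma~\ref{prop:invariance-2} is needed.
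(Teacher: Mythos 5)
Your proposal is correct and essentially identical in strategy to the paper's own proof: both decompose $F_J$ into base change (BC), the cohomological correspondence of Lemma~\ref{prop:invariance-2}, and the counit $\mathfrak{p}_! a_2^! \to \mathfrak{p}_!$, then verify commutativity piecewise against the cup-product-plus-trace construction of \eqref{eqn:H-Verdier}. The only difference is cosmetic bookkeeping — you separate out the naturality of the cup product under $a_1^*$ and $a_2^!$ as its own step, whereas the paper folds it into the claims of naturality of BC and of the counit when asserting commutativity of the outer rows of its big diagram.
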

\begin{proof}
	Retain the notation for Lemma \ref{prop:invariance-2} and let $\mathfrak{p} := \mathfrak{p}^{(I_1, \ldots, I_k)}_{N, I}$. Upon recalling the formalism of Künneth formula, cup products (Remark \ref{rem:cup-product}) and the trace maps $\mathrm{Tr}$ (see \eqref{eqn:H-Verdier}), Lemma \ref{prop:invariance-2} produce a diagram in $\cate{D}^b_c((X \smallsetminus \hat{N})^I, E)$:
	\[\begin{tikzcd}[column sep=small]
		\Frob_J^* \mathfrak{p}_! \underbracket{ \EuScript{F}^{(I_2, \ldots, I_1)}_{N, I, W^{\vee,\theta}, \Xi, E} }_{\text{on $\leq \mu$}} \Lotimes \Frob_J^* \mathfrak{p}_! \underbracket{ \EuScript{F}^{(I_2, \ldots, I_1)}_{N, I, W, \Xi, E} }_{\text{on $\leq \mu$}} \arrow[d, "{\mathrm{BC} \Lotimes \mathrm{BC}}"', "\simeq"] \arrow[r] & \Frob_J^* \mathfrak{p}_! \Omega^{\leq \mu} \arrow[r, "{\Frob_J^* \mathrm{Tr}}" inner sep=0.7em] \arrow[d, "\mathrm{BC}", "\simeq"'] & \Frob_J^* E_{(X \smallsetminus \hat{N})^I} \arrow[d, "\simeq"', "F_J"] \\
		\mathfrak{p}_! a_1^* \EuScript{F}^{(I_2, \ldots, I_1)}_{N, I, W^{\vee,\theta}, \Xi, E} \Lotimes \mathfrak{p}_! a_1^* \EuScript{F}^{(I_2, \ldots, I_1)}_{N, I, W, \Xi, E} \arrow[r] \arrow[d, "{ \mathfrak{p}_! \Frob^{(I_1, \ldots, I_k)}_{I_1, N, W^{\vee,\theta}} \;\Lotimes\; \mathfrak{p}_! \Frob^{(I_1, \ldots, I_k)}_{I_1, N, W} }"'] & \mathfrak{p}_! a_1^* \Omega^{\leq \mu} \simeq \mathfrak{p}_! \Omega \arrow[r, "\mathrm{Tr}"] & E_{(X \smallsetminus \hat{N})^I} \\ 
		\mathfrak{p}_! a_2^! \EuScript{F}^{(I_1, \ldots, I_k)}_{N, I, W^{\vee,\theta}, \Xi, E} \Lotimes \mathfrak{p}_! a_2^! \EuScript{F}^{(I_1, \ldots, I_k)}_{N, I, W, \Xi, E} \arrow[r] \arrow[d] & \mathfrak{p}_! a_2^! \Omega^{\leq \mu'} \simeq \mathfrak{p}_! \Omega \arrow[equal, u] \arrow[d] & \\
		\mathfrak{p}_! \underbracket{ \EuScript{F}^{(I_1, \ldots, I_k)}_{N, I, W^{\vee,\theta}, \Xi, E} }_{\text{on $\leq \mu'$}} \Lotimes \mathfrak{p}_! \underbracket{ \EuScript{F}^{(I_1, \ldots, I_k)}_{N, I, W, \Xi, E} }_{\text{on $\leq \mu'$}} \arrow[r] & \mathfrak{p}_! \Omega^{\leq \mu'} \arrow[ruu, bend right, start anchor=east, "\mathrm{Tr}"'] &
	\end{tikzcd}\]
	where $\mathrm{BC}$ (resp.\ $\mathfrak{p}_! a_2^! \to \mathfrak{p}_!$) is the arrow in \eqref{eqn:BC} (resp.\ explained after \eqref{eqn:F_J}). The diagram commutes: indeed,
	\begin{compactitem}
		\item the first two rows form a commutative diagram by the naturality of $\mathrm{BC}$, which is ultimately based on the topological invariance of the étale topos together with the fact that universal homeomorphisms respect duality \cite[9.1.5 Proposition and 12.2]{LO2};
		\item the commutativity of the middle square comes from Lemma \ref{prop:invariance-2}, by applying $\mathfrak{p}_!$;
		\item the remaining pieces commute by the naturality of $\mathfrak{p}_! a_2^! \to \mathfrak{p}_!$ and of $\mathrm{Tr}$.
	\end{compactitem}

	The composite of the last row is \eqref{eqn:H-Verdier}, and that of the first row is its $\Frob_J^*$-image (now for the $\leq \mu$ part). The composite of the leftmost column yields $F_J \Lotimes F_J: \Frob_J^* \EuScript{H}^{\leq \mu, E}_{N, I, W^{\vee,\theta}} \Lotimes \Frob_J^* \EuScript{H}^{\leq \mu, E}_{N, I, W} \to \EuScript{H}^{\leq \mu', E}_{N, I, W^{\vee,\theta}} \Lotimes \EuScript{H}^{\leq \mu', E}_{N, I, W}$ by the very definition of $F_J$. This completes the proof.
\end{proof}

The case $k = 1$, i.e.\ when $F_J$ is the total Frobenius morphism, is relatively straightforward. Cf.\ the proof of Lemma \ref{prop:invariance-1}.

Recall from \S\ref{sec:partial-Frobenius} that $F_J$ furnishes an $E$-linear endomorphism of $\varinjlim_\mu \EuScript{H}^{0, \leq\mu, E}_{N,I,W} \bigg|_{\overline{\eta^I}}$, still denoted as $F_J$.

\begin{corollary}\label{prop:F_J-invariance}
	The pairing $\lrangle{\cdot , \cdot}_{\overline{\eta^I}}$ in \eqref{eqn:pairing-sp} is invariant under $F_J$ for all $J \subset I$.
\end{corollary}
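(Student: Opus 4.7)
The plan is to obtain this as a direct consequence of Proposition~\ref{prop:Frob-invariance} by passing to stalks at $\overline{\eta^I}$, applying $H^0$, and taking the limit $\varinjlim_\mu$. The only content beyond Proposition~\ref{prop:Frob-invariance} is an elementary compatibility for the constant sheaf on the base.

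First I would rewrite the assertion as $\lrangle{F_J h,\, F_J h'}_{\overline{\eta^I}} = \lrangle{h,\, h'}_{\overline{\eta^I}}$ for $h \in \varinjlim_\mu \EuScript{H}^{0, \leq\mu, E}_{N,I,W^{\vee,\theta}} \bigl|_{\overline{\eta^I}}$ and $h' \in \varinjlim_\mu \EuScript{H}^{0, \leq\mu, E}_{N,I,W} \bigl|_{\overline{\eta^I}}$. By construction the pairing $\lrangle{\cdot,\cdot}_{\overline{\eta^I}}$ is the stalk at $\overline{\eta^I}$ of the morphism $\mathfrak{B}^{\Xi,E}_{N,I,W}$, which is itself obtained by taking $H^0$ (for the ordinary $t$-structure on $(X \smallsetminus \hat{N})^I$) of the map of complexes in \eqref{eqn:H-Verdier} and composing with the cup product $H^0(\mathcal{L}) \otimes H^0(\mathcal{L}') \to H^0(\mathcal{L} \Lotimes \mathcal{L}')$. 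Since both of these operations commute with stalk formation, it suffices to take stalks of the diagram of Proposition~\ref{prop:Frob-invariance} at $\overline{\eta^I}$ and then apply $H^0$.

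Next, I would use that a geometric generic point admits a canonical Frobenius lift, so that $(\Frob_J^* \mathcal{F})\bigl|_{\overline{\eta^I}}$ is naturally identified with $\mathcal{F}\bigl|_{\overline{\eta^I}}$ for any sheaf $\mathcal{F}$ on $(X \smallsetminus \hat{N})^I$; under this identification, $F_J$ on $\Frob_J^* \EuScript{H}^{0,\leq\mu,E}_{N,I,W}$ induces the endomorphism of the stalk that already appears in \S\ref{sec:partial-Frobenius}. The key observation is that for the \emph{constant} sheaf $E_{(X\smallsetminus\hat{N})^I}$, the canonical map $F_J : \Frob_J^* E_{(X\smallsetminus\hat{N})^I} \to E_{(X\smallsetminus\hat{N})^I}$ is, by construction, the identity, and hence induces the identity on the stalk $E$ at $\overline{\eta^I}$.

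Putting these together, taking $H^0$ of the stalk at $\overline{\eta^I}$ of the diagram in Proposition~\ref{prop:Frob-invariance} produces the commutative square
\[\begin{tikzcd}[column sep=small]
	\EuScript{H}^{0,\leq\mu,E}_{N,I,W^{\vee,\theta}}\bigl|_{\overline{\eta^I}} \dotimes{E} \EuScript{H}^{0,\leq\mu,E}_{N,I,W}\bigl|_{\overline{\eta^I}} \arrow[r, "{\lrangle{\cdot,\cdot}_{\overline{\eta^I}}}"] \arrow[d, "{F_J \otimes F_J}"'] & E \arrow[d, equal] \\
	\EuScript{H}^{0,\leq\mu',E}_{N,I,W^{\vee,\theta}}\bigl|_{\overline{\eta^I}} \dotimes{E} \EuScript{H}^{0,\leq\mu',E}_{N,I,W}\bigl|_{\overline{\eta^I}} \arrow[r, "{\lrangle{\cdot,\cdot}_{\overline{\eta^I}}}"'] & E
\end{tikzcd}\]
Passing to $\varinjlim_\mu$ gives the desired invariance. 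No obstacle is expected — the geometric input is entirely absorbed in Proposition~\ref{prop:Frob-invariance}, and what remains is the routine observation that the partial Frobenius correspondence on a constant sheaf is identity on stalks.
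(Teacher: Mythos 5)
Your proposal is correct and follows the same route as the paper: take $H^0$ and $\varinjlim_\mu$ of the diagram in Proposition~\ref{prop:Frob-invariance}, restrict to the stalk at $\overline{\eta^I}$, and use that $F_J$ acts as the identity on the stalk of the constant sheaf. The paper's proof is just a terser statement of exactly this, so your elaboration (including the canonical identification $(\Frob_J^*\mathcal{F})\bigl|_{\overline{\eta^I}} \simeq \mathcal{F}\bigl|_{\overline{\eta^I}}$) is sound and fills in the implicit steps.
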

\begin{proof}
	After taking $\mathrm{H}^0$ and $\varinjlim_\mu$, Proposition \ref{prop:Frob-invariance} implies that
	\[ \lrangle{h_1, h_2}_{\overline{\eta^I}} = \lrangle{F_J(h_1), F_J(h_2)}_{\overline{\eta^I}} \]
	for all $h_1, h_2$ in $\varinjlim_\mu \EuScript{H}^{0, \leq \mu, E}_{N, I, W^{\vee,\theta}} \big|_{\overline{\eta^I}}$ and $\varinjlim_\mu \EuScript{H}^{0, \leq \mu, E}_{N, I, W} \big|_{\overline{\eta^I}}$, respectively.
\end{proof}

The cautious reader might worry about a missing power of $p$ in Corollary \ref{prop:F_J-invariance} due to Tate twists. It does not occur here by our normalizations of $\EuScript{S}$, $\EuScript{F}$ and $\VerD$.

\subsection{Computation of the transpose}\label{sec:computation-transpose}
We adopt the notation of \S\ref{sec:excursion-operator}. The integration pairing $\lrangle{\cdot, \cdot}$ of Remark \ref{rem:pairing-level} is non-degenerate symmetric on the finite-dimensional $E$-vector space $H_{\{0\}, \mathbf{1}} \simeq H_{\emptyset, \mathbf{1}}$. The \emph{transpose} $S^*$ of any $S \in \End_E\left(H_{\{0\}, \mathbf{1}}\right)$, characterized by $\lrangle{h', Sh} = \lrangle{S^* h', h}$ for all $h,h' \in H_{\{0\}, \mathbf{1}}$. Identify $\lrangle{\cdot, \cdot}$ with the pairing $\lrangle{\cdot, \cdot}_{\Delta(\overline{\eta})}$ in \eqref{eqn:pairing-sp}.

\begin{lemma}\label{prop:transpose-0}
	For all data $I,W,\xi,x$ and $\vec{\gamma} = (\gamma_i)_i \in \pi_1(\eta, \overline{\eta})^I$ for excursion operators, we have
	\[ S_{I,W,\xi,x,\vec{\gamma}}^* = S_{I, W^{\vee,\theta}, x, \xi, \vec{\gamma}^{-1}}. \]
	In particular, the $E$-algebra $\mathcal{B}_E$ is closed under transpose $S \mapsto S^*$.
\end{lemma}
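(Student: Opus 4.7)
The strategy is to write $S_{I,W,\xi,x,\vec{\gamma}}$ as the composition of five elementary operators and compute the transpose of each factor separately. Recalling the definition from \S\ref{sec:excursion-operator}, and identifying $\lrangle{\cdot, \cdot}$ on $H_{\{0\},\mathbf{1}}$ with $\lrangle{\cdot,\cdot}_{\Delta(\overline{\eta})}$ via \eqref{eqn:H-vs-Ccusp}, the relevant factors are: the creation $\EuScript{C}^\sharp_{(\cdot)}$, the specialization $\mathfrak{sp}^*$ and its inverse, the Galois action $\vec{\gamma}$, and the annihilation $\EuScript{C}^\flat_{(\cdot)}$. The pairing $\lrangle{\cdot,\cdot}_{\Delta(\overline{\eta})}$ pairs $H_{I, W^{\vee,\theta}}$ with $H_{I, W}$ (Propositions \ref{prop:S-Verdier}--\ref{prop:F-Verdier}), and similarly for $\lrangle{\cdot,\cdot}_{\overline{\eta^I}}$; thus taking the transpose interchanges $W$ and $W^{\vee,\theta}$ throughout.

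The plan for each factor is as follows. First, the isometric relation \eqref{eqn:pairing-sp} immediately gives $(\mathfrak{sp}^*)^* = (\mathfrak{sp}^*)^{-1}$ when the two sides are taken with respect to $W$ and $W^{\vee,\theta}$ respectively, and likewise for its inverse. Second, for the $\pi_1(\eta,\overline{\eta})^I$-action: Corollary \ref{prop:F_J-invariance} shows that all partial Frobenius morphisms $F_J$ preserve $\lrangle{\cdot,\cdot}_{\overline{\eta^I}}$; the action of $\pi_1(\eta^I, \overline{\eta^I})$ is the monodromy action on lisse sheaves and therefore preserves the pairing automatically because $\mathfrak{B}^{\Xi,E}_{N,I,W}$ is a morphism of $E$-sheaves into the constant sheaf $E_{(X\smallsetminus\hat{N})^I}$. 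Combining these two invariances, the full $\mathrm{FWeil}(\eta^I, \overline{\eta^I})$-action preserves the pairing, and hence so does the induced $\pi_1(\eta,\overline{\eta})^I$-action; consequently $\vec{\gamma}^* = \vec{\gamma}^{-1}$. Third, for creation/annihilation, note that if $x \in W$ is $\hat{G}$-invariant under the diagonal action, then since the Chevalley involution preserves $\hat{G}$ and $W^{\vee,\theta,\vee} = W$ as $\hat{G}$-representations (twice twisting by $\theta$ on top of double dual), the same $x$ defines a $\hat{G}$-equivariant map $(W^{\vee,\theta,\zeta_I})_{\hat{G}} \to E$, hence an annihilation datum for $W^{\vee,\theta}$; symmetrically for $\xi$. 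The claim is that $(\EuScript{C}^\sharp_x)^* = \EuScript{C}^\flat_x$ and $(\EuScript{C}^\flat_\xi)^* = \EuScript{C}^\sharp_\xi$, each on the $W^{\vee,\theta}$-side.

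The main technical obstacle is this last identity for the creation/annihilation operators. It amounts to the commutativity of Verdier duality with the coalescence morphism attached to the unique map $\emptyset \to I$ (or $\{0\} \to I$) and with the functoriality of $\EuScript{H}$ in $W^{\zeta_I}$. Concretely, one must verify that the pair of sheaf-level morphisms defining $\EuScript{C}^\sharp$ and $\EuScript{C}^\flat$ (recalled from \cite[(12.18), (12.19)]{Laf18}) are interchanged by the canonical isomorphism of Proposition \ref{prop:F-Verdier} together with the natural pairing $(W^{\zeta_I})^{\hat{G}} \otimes ((W^{\vee,\theta,\zeta_I}))_{\hat{G}} \to E$. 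This is essentially a diagram-chase using the compatibility of Proposition \ref{prop:F-Verdier} with coalescence (which is the last clause of that proposition) plus the fact that Verdier duality turns the inclusion $(W^{\zeta_I})^{\hat{G}} \hookrightarrow W^{\zeta_I}$ into the surjection $W^{\vee,\theta,\zeta_I} \twoheadrightarrow (W^{\vee,\theta,\zeta_I})_{\hat{G}}$.

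Granting these four identities, a direct computation unwinds: for $h,h' \in H_{\{0\},\mathbf{1}}$, one moves $\EuScript{C}^\flat_\xi$ across $\lrangle{\cdot,\cdot}_{\Delta(\overline{\eta})}$, then transports the remaining inner expression to $\lrangle{\cdot,\cdot}_{\overline{\eta^I}}$ via \eqref{eqn:pairing-sp}, inverts the $\vec{\gamma}$-action, returns via $\mathfrak{sp}^*$, and finally moves $\EuScript{C}^\sharp_x$ across, obtaining
\[ \lrangle{h', S_{I,W,\xi,x,\vec{\gamma}} h}_{\Delta(\overline{\eta})} = \lrangle{S_{I, W^{\vee,\theta}, x, \xi, \vec{\gamma}^{-1}} h', h}_{\Delta(\overline{\eta})}, \]
which is the desired identity. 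The closure of $\mathcal{B}_E$ under transpose is then immediate since the $S_{I,W,\xi,x,\vec{\gamma}}$ generate $\mathcal{B}_E$ as an $E$-algebra.
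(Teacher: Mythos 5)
Your proposal follows essentially the same route as the paper's proof: decompose the excursion operator into its five factors, appeal to \eqref{eqn:pairing-sp} for the specialization step, combine Corollary \ref{prop:F_J-invariance} with the automatic sheaf-theoretic monodromy invariance to get $\mathrm{FWeil}(\eta^I,\overline{\eta^I})$-invariance (hence $\pi_1(\eta,\overline{\eta})^I$-invariance by continuity), invoke the adjointness of the creation and annihilation operators, and unwind. The only difference is that you treat the adjointness $(\EuScript{C}^\flat_\xi)^* = \EuScript{C}^\sharp_\xi$ as a diagram-chase to be carried out via Proposition \ref{prop:F-Verdier} and coalescence compatibility, whereas the paper simply cites \cite[Remarque 5.4, (9.8)]{Laf18} for precisely this sheaf-level statement.
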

Note that the roles of $x,\xi$ are switched when one passes from $W$ to $W^{\vee,\theta}$. The transpose-invariance of $\mathcal{B}_E$ has already been sketched in \cite[Remarque 12.15]{Laf18}.
\begin{proof}
	Recall from \S\ref{sec:partial-Frobenius} that by choosing $\overline{\eta^I}$ and $\mathfrak{sp}$, there is a homomorphism $\mathrm{FWeil}(\eta^I, \overline{\eta^I}) \to \Weil{\mathring{F}}^I$ inducing an isomorphism between pro-finite completions. As $S_{I,W,\xi,x,\vec{\gamma}}$ and $S_{I, W^{\vee,\theta}, x, \xi, \vec{\gamma}^{-1}}$ are both continuous in $\vec{\gamma}$, it suffices to consider that case when $\vec{\gamma}$ comes from $\mathrm{FWeil}(\eta^I, \overline{\eta^I})$.

	By \cite[Remarque 5.4, (9.8)]{Laf18}, $\EuScript{C}^\flat_\xi$ and $\EuScript{C}^\sharp_\xi$ are already transposes of each other on the sheaf level with respect to $\mathfrak{B}^{\Xi,E}_{N,I,W}$; in particular they commute with $\mathfrak{sp}^*$. Ditto for $\EuScript{C}^\sharp_x$ and $\EuScript{C}^\flat_x$. Therefore, for all $h,h' \in H_{\{0\}, \mathbf{1}}$, we infer by using \eqref{eqn:pairing-sp} that
	\begin{align*}
		\lrangle{h', S_{I,W,x,\xi,\vec{\gamma}} (h) }_{\Delta(\overline{\eta})} & = \lrangle{h', \EuScript{C}^\flat_\xi (\mathfrak{sp}^*)^{-1} ( \vec{\gamma} \cdot \mathfrak{sp}^* \EuScript{C}^\sharp_x  h) }_{\Delta(\overline{\eta})} \\
		& = \lrangle{ \mathfrak{sp}^*(h'), \mathfrak{sp}^* \left( \EuScript{C}^\flat_\xi (\mathfrak{sp}^*)^{-1} ( \vec{\gamma} \cdot \mathfrak{sp}^* \EuScript{C}^\sharp_x h ) \right)  }_{\overline{\eta^I}} \\
		& = \lrangle{ \mathfrak{sp}^* \EuScript{C}^\sharp_\xi (h'), \vec{\gamma} \cdot \mathfrak{sp}^* \EuScript{C}^\sharp_x(h) }_{\overline{\eta}}, \\
		\lrangle{ S_{I, W^{\vee,\theta}, \xi, x, \vec{\gamma}^{-1}}(h'), h}_{\Delta(\overline{\eta})} & = \lrangle{ \EuScript{C}^\flat_x (\mathfrak{sp}^*)^{-1} (\vec{\gamma}^{-1} \cdot \mathfrak{sp}^* \EuScript{C}^\sharp_\xi h'), h }_{\Delta(\overline{\eta})} \\
		& = \lrangle{ \mathfrak{sp}^* \left( \EuScript{C}^\flat_x (\mathfrak{sp}^*)^{-1} (\vec{\gamma}^{-1} \cdot \mathfrak{sp}^* \EuScript{C}^\sharp_\xi h') \right), \mathfrak{sp}^*(h) }_{\overline{\eta^I}} \\
		& = \lrangle{ \vec{\gamma}^{-1} \cdot \mathfrak{sp}^* \EuScript{C}^\sharp_\xi (h'), \mathfrak{sp}^* \EuScript{C}^\sharp_x (h) }_{\overline{\eta^I}}.
	\end{align*}
	It remains to show that $\lrangle{\cdot, \cdot}_{\overline{\eta^I}}$ is $\mathrm{FWeil}(\eta^I, \overline{\eta^I})$-invariant. Recall that the $\mathrm{FWeil}(\eta^I, \overline{\eta^I})$-action unites those from $\pi_1(\eta^I, \overline{\eta^I})$ and partial Frobenius morphisms $F_J$. The $\pi_1(\eta^I, \overline{\eta^I})$-action leaves $\lrangle{\cdot, \cdot}_{\overline{\eta^I}}$ invariant since the latter comes from the sheaf-level pairing $\mathfrak{B}^{\Xi,E}_{N,I,W}$ over $(X \smallsetminus \hat{N})^I$. The $F_J$-invariance of $\lrangle{\cdot, \cdot}$ for all $J \subset I$ is assured by Corollary \ref{prop:F_J-invariance}.
\end{proof}

We are now able to describe the transpose of excursion operators.

\begin{definition}\label{def:dagger}
	For every $f \in \mathscr{O}(\hat{G} \bbslash (\Lgrp{G})^I \sslash \hat{G})$, set $f^\dagger(\vec{g}) := f(\Lgrp{\theta}(\vec{g}^{-1}))$ where $\vec{g} \in (\Lgrp{G})^I$ and $\Lgrp{\theta}$ stands for the Chevalley involution of $(\Lgrp{G})^I$. Then $f \mapsto f^\dagger$ defines an involution of $\mathscr{O}(\hat{G} \bbslash (\Lgrp{G})^I \sslash \hat{G})$.
\end{definition}

\begin{lemma}\label{prop:transpose-1}
	For all $I$, $f \in \mathscr{O}(\hat{G} \bbslash (\Lgrp{G})^I \sslash \hat{G})$ and $\vec{\gamma} = (\gamma_i)_i \in \pi_1(\eta, \overline{\eta})^I$, we have $S_{I,f,\vec{\gamma}}^* = S_{I, f^\dagger, \vec{\gamma}^{-1}}$. 
\end{lemma}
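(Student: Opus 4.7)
The plan is to reduce Lemma \ref{prop:transpose-1} directly to Lemma \ref{prop:transpose-0} by unraveling the passage from matrix coefficients to excursion operators. Recall from the discussion after \cite[Lemme 10.6]{Laf18} that $S_{I,W,x,\xi,\vec{\gamma}}$ depends only on the matrix coefficient
\[ f(\vec{g}) = \lrangle{\xi, \vec{g} \cdot x}_{W^\vee \otimes W}, \qquad \vec{g} \in (\Lgrp{G})^I, \]
and moreover that $S_{I,f,\vec{\gamma}}$ is characterized by linearity in $f$ plus the agreement $S_{I, f, \vec{\gamma}} = S_{I, W, x, \xi, \vec{\gamma}}$ whenever $f$ is realized as a matrix coefficient. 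So it suffices to identify the matrix coefficient attached to the right-hand side of Lemma \ref{prop:transpose-0}, namely to the datum $(I, W^{\vee,\theta}, \xi, x, \vec{\gamma}^{-1})$, as $f^\dagger$.

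First I would verify that $(W^{\vee,\theta})^\vee \simeq W^\theta$ canonically: the two twistings $(-)^\theta$ and $(-)^\vee$ commute because $\theta$ is an involution, so $(W^{\vee,\theta})^\vee = (W^\vee)^{\vee,\theta}$, and the latter identifies with $W^\theta$ via $W^{\vee\vee} \simeq W$. In particular the $\hat{G}$-invariance conditions used by the excursion construction coincide on the underlying vector spaces: $\xi \in (W^\vee)^{\hat{G}} = (W^{\vee,\theta})^{\hat{G}}$ and $x \in W^{\hat{G}} = (W^\theta)^{\hat{G}}$, so the data $(\xi, x)$ for $W^{\vee,\theta}$ are legitimate.

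Next comes the key computation. The matrix coefficient attached to $(W^{\vee,\theta}, \xi, x)$ is
\[ f'(\vec{g}) = \lrangle{ x, \vec{g} \cdot_\theta \xi }_{(W^{\vee,\theta})^\vee \otimes W^{\vee,\theta}} = \lrangle{ x, \Lgrp{\theta}(\vec{g}) \cdot \xi }_{W \otimes W^\vee}, \]
the $\theta$-twisted action being $\vec{g} \cdot_\theta \xi = \Lgrp{\theta}(\vec{g}) \cdot \xi$. Using the duality identity $\lrangle{ x, g \xi} = \lrangle{ g^{-1} x, \xi}$ one then rewrites
\[ f'(\vec{g}) = \lrangle{ \xi, \Lgrp{\theta}(\vec{g})^{-1} x }_{W^\vee \otimes W} = f\bigl( \Lgrp{\theta}(\vec{g})^{-1} \bigr) = f^\dagger(\vec{g}). \]
Combining with Lemma \ref{prop:transpose-0}, for any $f$ realized as such a matrix coefficient one gets $S_{I,f,\vec{\gamma}}^* = S_{I, W^{\vee,\theta}, \xi, x, \vec{\gamma}^{-1}} = S_{I, f^\dagger, \vec{\gamma}^{-1}}$.

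Finally, to extend to arbitrary $f \in \mathscr{O}(\hat{G} \bbslash (\Lgrp{G})^I \sslash \hat{G})$ one invokes the algebraic Peter--Weyl decomposition used in the construction of $S_{I,f,\vec{\gamma}}$: such functions are spanned by matrix coefficients of irreducible $W \in \cate{Rep}_E((\Lgrp{G})^I)$ against $\hat{G}$-invariant vectors and covectors. Since $f \mapsto f^\dagger$ is $E$-linear (and, as noted in Definition \ref{def:dagger}, involutive on $\mathscr{O}(\hat{G} \bbslash (\Lgrp{G})^I \sslash \hat{G})$), and $S_{I,-,\vec{\gamma}^{-1}}$ is likewise $E$-linear, the identity spreads by linearity to all $f$. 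I expect the only delicate step to be the matrix-coefficient computation, specifically bookkeeping the two opposite conventions (twist of action versus dual) so that the inverse in $\Lgrp{\theta}(\vec{g})^{-1}$ lands in the right place; once that is checked, everything else is formal and Lemma \ref{prop:transpose-0} supplies all the actual content.
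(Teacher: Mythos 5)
Your argument is correct and is essentially the paper's own proof: both reduce to Lemma \ref{prop:transpose-0} by computing that the matrix coefficient attached to $(W^{\vee,\theta}, \xi, x)$ (roles of $x$ and $\xi$ swapped) is $f^\dagger$, then invoke bilinearity of the assignment $f \mapsto S_{I,f,\vec{\gamma}}$ and the Peter--Weyl spanning of $\mathscr{O}(\hat{G} \bbslash (\Lgrp{G})^I \sslash \hat{G})$ by such matrix coefficients. You make explicit a couple of checks the paper leaves implicit (that the $\hat{G}$-invariance conditions on $x,\xi$ persist under $(-)^{\vee,\theta}$, and the linearity step), but the content is the same.
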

\begin{proof}
	It suffices to consider the case $f(\vec{g}) = \lrangle{\xi, \vec{g} \cdot x}_{W^\vee \otimes W}$, where $\xi \in W^\vee$, $x \in W$ are as in Lemma \ref{prop:transpose-0}, and $\lrangle{\cdot, \cdot}_{W^\vee \otimes W}$ is the evident duality pairing. As before, denote by $W^\theta$, $W^{\vee,\theta}$ be the $\Lgrp{\theta}$-twists of the representations $W, W^\vee$ etc., and the preceding convention on pairing still applies. Note that $(W^{\vee,\theta})^\vee \simeq W^\theta$ canonically in $\cate{Rep}_E((\Lgrp{G})^I)$.

	For every $\vec{g} \in (\Lgrp{G})^I$, we have
	\begin{align*}
		f^\dagger(\vec{g}) & = \lrangle{\xi, \underbracket{\Lgrp{\theta}(g)^{-1} \cdot x}_{\text{original action}} }_{W^\vee \otimes W} \\
		& = \lrangle{\xi, \underbracket{\vec{g}^{-1} \cdot x}_{\Lgrp{\theta}\text{-twisted}} }_{W^{\theta, \vee} \otimes W^\theta} = \lrangle{\underbracket{\vec{g} \cdot \xi}_{\Lgrp{\theta}\text{-twisted}} , x}_{W^{\theta, \vee} \otimes W^\theta} \\
		& = \lrangle{ x, \vec{g} \cdot \xi }_{(W^{\vee,\theta})^\vee \otimes W^{\vee,\theta}}.
	\end{align*}

	In view of Lemma \ref{prop:transpose-0}, we deduce that $S_{I,f,\vec{\gamma}}^* = S_{I,W,\xi,x,\vec{\gamma}}^*$ equals $S_{I, W^{\vee,\theta}, x, \xi, \vec{\gamma}^{-1}} = S_{I, f^\dagger, \vec{\gamma}^{-1}}$, as asserted.
\end{proof}

Consider any homomorphism $\nu: \mathcal{B} := \mathcal{B}_E \otimes_E \overline{\Q_\ell} \to \overline{\Q_\ell}$ of $\overline{\Q_\ell}$-algebras. As $\mathcal{B}$ is commutative and closed under transpose, $\nu^*: S \mapsto \nu(S^*)$ is also a homomorphism of $\overline{\Q_\ell}$-algebras.

\begin{proposition}\label{prop:transpose-parameter}
	If $\sigma \in \Phi(G)$ is attached to $\nu: \mathcal{B} \to \overline{\Q_\ell}$, then $\Lgrp{\theta} \circ \sigma$ is attached to $\nu^*$.
\end{proposition}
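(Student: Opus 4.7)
The plan is to verify directly, using the characterization \eqref{eqn:sigma-characterization}, that $\Lgrp{\theta} \circ \sigma$ is attached to $\nu^*$. Since the map $\mathrm{Hom}_{\overline{\Q_\ell}\text{-alg}}(\mathcal{B}, \overline{\Q_\ell}) \to \Phi(G)$ is injective (as recalled just before Theorem \ref{prop:global-contragredient} and at the end of \S\ref{sec:excursion-operator}), it suffices to produce \emph{some} semisimple parameter satisfying the characterization for $\nu^*$, and then identify it with $\Lgrp{\theta} \circ \sigma$.

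First I would verify that $\nu^*$ is indeed an $\overline{\Q_\ell}$-algebra character. Since $\mathcal{B}$ is commutative and is closed under $(\cdot)^*$ by Lemma \ref{prop:transpose-0}, and since transpose with respect to a non-degenerate bilinear pairing is always an anti-involution, commutativity upgrades $(\cdot)^*$ to an $E$-algebra involution of $\mathcal{B}_E$ (hence of $\mathcal{B}$); therefore $\nu^* = \nu \circ (\cdot)^*$ is a character.

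The main computation is then a one-line application of Lemma \ref{prop:transpose-1} followed by unfolding Definition \ref{def:dagger}. Fix $n \in \Z_{\geq 1}$, $\vec{\gamma} = (\gamma_1, \ldots, \gamma_n) \in \pi_1(\eta, \overline{\eta})^n$ and $\tilde{f} \in \mathscr{O}(\hat{G} \bbslash (\Lgrp{G})^{\{0, \ldots, n\}} \sslash \hat{G})$. By Lemma \ref{prop:transpose-1} applied with $I = \{0, \ldots, n\}$ and $(1, \vec{\gamma})^{-1} = (1, \gamma_1^{-1}, \ldots, \gamma_n^{-1})$, then by \eqref{eqn:sigma-characterization} applied to $\sigma$,
\[
\nu^*\!\left( S_{\{0, \ldots, n\}, \tilde{f}, (1, \vec{\gamma})} \right) = \nu\!\left( S_{\{0, \ldots, n\}, \tilde{f}^\dagger, (1, \vec{\gamma}^{-1})} \right) = \tilde{f}^\dagger\!\left( \sigma(1), \sigma(\gamma_1^{-1}), \ldots, \sigma(\gamma_n^{-1}) \right).
\]
Unwinding Definition \ref{def:dagger} and using that $\sigma$ is a homomorphism (so $\sigma(1) = 1$ and $\sigma(\gamma_i^{-1})^{-1} = \sigma(\gamma_i)$), the right-hand side equals
\[
\tilde{f}\!\left( \Lgrp{\theta}(\sigma(1)), \Lgrp{\theta}(\sigma(\gamma_1)), \ldots, \Lgrp{\theta}(\sigma(\gamma_n)) \right),
\]
which is exactly the characterization \eqref{eqn:sigma-characterization} for the pair $(\nu^*, \Lgrp{\theta} \circ \sigma)$. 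Appealing to the aforementioned injectivity finishes the proof.

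The proposition is essentially a formal consequence of Lemma \ref{prop:transpose-1}; there is no serious obstacle here, as the substantive content (the relation $S^* = S_{I, f^\dagger, \vec{\gamma}^{-1}}$ and the characterization of $\sigma$ via pseudo-characters) has already been established. The only minor subtlety is the algebraic verification that $\nu^*$ remains a character, which relies on the commutativity of $\mathcal{B}$, and the careful bookkeeping that the two inversions (from $\vec{\gamma}^{-1}$ and from the definition of $(\cdot)^\dagger$) cancel to leave $\Lgrp{\theta} \circ \sigma$ evaluated at $(1, \vec{\gamma})$.
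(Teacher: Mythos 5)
Your proof is correct and follows essentially the same route as the paper's: apply the characterization \eqref{eqn:sigma-characterization} of the pseudo-character, invoke Lemma \ref{prop:transpose-1} to compute the transpose of the excursion operator, and unwind the definition of $f^\dagger$ so that the two inversions cancel, leaving $\Lgrp{\theta} \circ \sigma$. If anything, your bookkeeping is slightly more careful than the paper's one-line chain, since you keep the $\gamma_0 = 1$ slot explicit and thus apply \eqref{eqn:sigma-characterization} verbatim; the preliminary check that $\nu^*$ is an algebra character is something the paper records just before the statement rather than inside the proof.
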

\begin{proof}
	Fix $n \in \Z_{\geq 0}$ and let $I := \{0, \ldots, n\}$. Given the characterization \eqref{eqn:sigma-characterization} of the $L$-parameters attached to $\nu, \nu^*$, it boils down to the observation that for all $\vec{\gamma} = (\gamma_0, \ldots, \gamma_n) \in \pi_1(\eta, \overline{\eta})^I$ and $f \in \mathscr{O}(\hat{G} \bbslash \Lgrp{G}^I \sslash \hat{G})$,
	\begin{multline*}
			\nu^* \left( S_{I, f , \vec{\gamma}} \right) = \nu\left( S_{I, f , \vec{\gamma}}^* \right) = \nu \left( S_{I, f^\dagger, \vec{\gamma}^{-1}} \right) \\
			= f^\dagger\left(\sigma(\gamma_0)^{-1}, \ldots, \sigma(\gamma_n)^{-1}\right) = f\left(\Lgrp{\theta} \sigma (\gamma_0), \ldots, \Lgrp{\theta}\sigma(\gamma_n) \right),
	\end{multline*}
	in which the second equality stems from Proposition \ref{prop:transpose-1}.
\end{proof}

Write $\mathfrak{H}_\sigma := \mathfrak{H}_\nu$ if $\sigma \in \Phi(G)$ is attached to $\nu$, and write $\lrangle{\cdot, \cdot}_{\sigma, \sigma'} := \lrangle{\cdot, \cdot} \big|_{\mathfrak{H}_\sigma \otimes \mathfrak{H}_{\sigma'}}$, for all $\sigma, \sigma' \in \Phi(G)$.

\begin{proof}[Proof of Theorem \ref{prop:global-contragredient}]
	Enlarge $E$ so that all homomorphisms $\nu: \mathcal{B} \to \overline{\Q_\ell}$ are defined over $E$. Fix a $\nu$ such that $\mathfrak{H}_\nu \neq \{0\}$. Since $\mathcal{B}_E$ is closed under transpose, the subspace $\mathfrak{H}_\nu^\perp \subset H_{\{0\}, \mathbf{1}}$ defined relative to $\lrangle{\cdot, \cdot}$ is $\mathcal{B}_E$-stable as well. Since $\lrangle{\cdot, \cdot}$ is non-degenerate, $\mathfrak{H}_\nu^\perp \neq H_{\{0\}, \mathbf{1}}$. Use the $\mathcal{B}_E$-invariance to decompose $\mathcal{B}_E$-modules as follows
	\[ \mathfrak{H}_\nu^\perp = \bigoplus_\mu \mathfrak{H}_\nu^\perp \cap \mathfrak{H}_\mu , \quad \frac{H_{\{0\}, \mathbf{1}}}{\mathfrak{H}_\nu^\perp} = \bigoplus_\mu \frac{\mathfrak{H}_\mu}{\mathfrak{H}_\nu^\perp \cap \mathfrak{H}_\mu} \neq \{0\}. \]
	We contend that $\mathfrak{H}_\mu \not\subset \mathfrak{H}_\nu^\perp$ only if $\mu = \nu^*$, or equivalently $\mu^* = \nu$.
	
	Indeed, $\lrangle{\cdot, \cdot}$ induces a non-degenerate pairing
	\[ \lrangle{\cdot, \cdot}_\nu: \mathfrak{H}_\nu \dotimes{E} \bigoplus_\mu \frac{\mathfrak{H}_\mu}{\mathfrak{H}_\mu \cap \mathfrak{H}_\nu^\perp} \to E. \]
	Let $d := \dim H_{\{0\}, \mathbf{1}}$. For every $S \in \mathcal{B}_E$, write $S_\nu := S|_{\mathfrak{H}_\nu}$. Then $(S_\nu - \nu(S))^d = 0$. Taking transpose with respect to $\lrangle{\cdot, \cdot}_\nu$ yields $(S_\nu^* - \nu(S))^d = 0$ in $\End_E\left(\frac{\mathfrak{H}_\mu}{\mathfrak{H}_\mu \cap \mathfrak{H}_\nu^\perp}\right)$, for each $\mu$.
	
	On the other hand, the transpose $S^* \in \mathcal{B}_E$ with respect to $\lrangle{\cdot, \cdot}$ satisfies $(S^* - \mu(S^*))^d = 0$ on $\mathfrak{H}_\mu$, and $S^*|_{\mathfrak{H}_\mu}$ induces $S^{*,\mu} \in \End_E\left(\frac{\mathfrak{H}_\mu}{\mathfrak{H}_\mu \cap \mathfrak{H}_\nu^\perp}\right)$ satisfying $(S^{*,\mu} - \mu(S^*))^d = 0$. Clearly $S^{*,\mu} = S_\nu^*$. All in all, we deduce that $\mu^*(S) := \mu(S^*) = \nu(S)$ whenever $\mathfrak{H}_\mu \neq \mathfrak{H}_\nu^\perp \cap \mathfrak{H}_\mu$.

	It follows from the claim that if $\lrangle{\cdot, \cdot}_{\sigma, \sigma'}$ is not identically zero, then the corresponding $\nu, \nu': \mathcal{B} \to \overline{\Q_\ell}$ satisfy $\nu^* = \nu'$. Now Proposition \ref{prop:transpose-parameter} implies $\Lgrp{\theta} \circ \sigma = \sigma'$.
\end{proof}

\bibliographystyle{abbrv}	
\bibliography{Contragredient}



\vspace{1em}
\begin{flushleft}
	Wen-Wei Li \\
	E-mail address: \href{mailto:wwli@bicmr.pku.edu.cn}{\texttt{wwli@bicmr.pku.edu.cn}} \\
	Beijing International Center for Mathematical Research, Peking University \\
	No.\ 5 Yiheyuan Road, Beijing 100871, People's Republic of China.
\end{flushleft}

\end{document}